\numberwithin{equation}{section}
\newcommand{\CC}{\mathbb {C}}
\newcommand{\RR}{\mathbb{R}}
\newcommand{\Ker}{\operatorname{Ker}}
\DeclareMathOperator{\ospan}{\overline{Span}}
\renewcommand{\phi}{\varphi}
\newcommand{\ima}{{\rm Im}\,}
\newcommand{\vep}{\varepsilon}
\newcommand{\co}{\mathbb{C}}
\newcommand{\ZZ}{\mathcal{Z}}
\newcommand{\cp}{\mathbb{C^+}}
\newcommand{\cm}{\mathbb{C^-}}
\newtheorem{Thm}{Theorem}[section]
\newtheorem{theorem}[Thm]{Theorem}
\newtheorem{lemma}[Thm]{Lemma}
\newtheorem{proposition}[Thm]{Proposition}
\newtheorem{remark}[Thm]{Remark}
\newtheorem{example}[Thm]{Example}
\newcommand{\A}{\mathcal{A}}
\newcommand{\LL}{\mathcal{L}}
\newcommand{\UU}{\mathcal{U}}
\newcommand{\MM}{\mathcal{M}}
\newcommand{\EE}{\mathcal{E}}
\begin{document}
\sloppy
\title[Spectral theory of rank one perturbations of normal operators]
{Spectral theory of rank one perturbations \\ of normal compact operators}
\author{Anton Baranov}

\address{Anton Baranov,
\newline Department of Mathematics and Mechanics, St.~Petersburg State University, St.~Petersburg, Russia,
\newline National Research University Higher School of Economics, St.~Petersburg, Russia,
\newline {\tt anton.d.baranov@gmail.com}
}
\thanks{Theorems 2.1--2.6 and the results of Sections 3--6 were
obtained with the support of Russian Science Foundation project 14-21-00035. 
Theorems 2.7--2.8 and the results of Sections 7--8
were obtained as a part of joint grant of Russian Foundation for Basic Research 
(project 17-51-150005-NCNI-a) and CNRS, France 
(project PRC CNRS/RFBR 2017-2019 ``Noyaux reproduisants dans
des espaces de Hilbert de fonctions analytiques'').}

\maketitle

\begin{abstract}
We construct a functional model for rank one perturbations 
of compact normal operators acting in a certain Hilbert spaces 
of entire functions generalizing de Branges spaces. Using this model
we study completeness and spectral synthesis problems for such perturbations. 
Previously, in \cite{by}
the spectral theory of rank one perturbations was developed
in the selfadjoint case. In the present paper we extend and significantly simplify 
most of known results in the area.
We also prove an Ordering Theorem for invariant subspaces 
with common spectral part. This result is essentially new even 
for rank one perturbations of compact selfadjoint operators.
\end{abstract}

\section{Introduction}
\label{int}

\subsection{Spectral synthesis problem.}
One of the basic question in the abstract operator 
theory is whether a linear operator $\LL$ from a given class has a 
complete set of eigenvectors or {\it generalized eigenvectors} (that is, 
elements of $\Ker\,(\LL-\lambda I)^n$ for some 
$\lambda\in\mathbb{C}$ and $n\in\mathbb{N}$).
If the answer is positive, then the next question arises, whether
it is possible to reconstruct all $\LL$-invariant subspaces 
from its generalized eigenvectors. Namely, given an $\LL$-invariant subspace
$\MM$, the question is whether
\begin{equation}
\label{syn}
\MM = \ospan \Big\{x\in \MM: x\in \bigcup\limits_{\lambda, n} \Ker\, 
(\LL-\lambda I)^n\Big\},
\end{equation} 
i.e., whether $\MM$ coincides with the closed linear span of 
the generalized eigenvectors it contains.
All subspaces are assumed to be closed.

A continuous linear operator $\LL$ in  a separable Hilbert (or Banach, or Frech\'et) 
space $H$ is said to admit {\it spectral synthesis} 
if for any invariant subspace $\MM$ of $\LL$ we have \eqref{syn}.
The notion of the spectral synthesis for a general operator 
goes back to J. Wermer \cite{wer}, who 
showed, in particular, that any compact normal operator in a Hilbert 
space admits spectral synthesis.  Moreover, Wermer proved that a normal
operator $\A$ with simple eigenvalues $\lambda_n$ does not admit 
spectral synthesis if and only if the set $\{\lambda_n\}$ 
carries a complex measure orthogonal to polynomials, i.e., there exists a 
nontrivial sequence 
$\{\mu_n\} \in \ell^1$ such that $\sum_n \mu_n \lambda_n^k = 0$, $k\in \mathbb{N}_0$. 
Existence of such measures follows from classical Wolff's example \cite{wolff} 
of a Cauchy transform 
vanishing outside of the disc: there exist $\lambda_n \in  \mathbb{D} = 
\{z\in\mathbb{C}: |z|<1\}$ and $\{\mu_n\}\in\ell^1$ such that 
$\sum_{n}\frac{\mu_n}{z-\lambda_n} \equiv 0$, $|z|>1$.

The first example of a compact operator which does not admit spectral synthesis 
was implicitly given by H. Hamburger \cite{hamb} (even before Wermer's paper).
Further results were obtained  in the 1970s by N. Nikolski \cite{nik69}
and A. Markus \cite{markus}. E.g., Nikolski \cite{nik69} proved that
any Volterra operator can be a part of a complete compact operator
(recall that {\it Volterra operator} is a compact operator whose spectrum is 
$\{0\}$).


\subsection{Rank one perturbations.}
One of the major subareas of spectral theory deals with ``small''
perturbations of ``good'' operators (e.g., trace class or Schatten class perturbations
of selfadjoint operators). However, even in this case there are few general results
about completeness and synthesis. The best studied are the cases of dissipative 
operators and of {\it weak perturbations} in the sense of Keldysh and Matsaev 
(for a survey of these results see 
\cite{Gohb_Krein} or \cite{by} and references therein). 

We study spectral properties of rank one perturbations
of compact normal operators. While compact normal operators are 
among the simplest infinite-dimensional operators (being unitarily
equivalent to a diagonal operator in $\ell^2$), we will see that 
the spectral theory of their rank one perturbations is 
highly nontrivial. 

Let $\A$ be a bounded cyclic normal operator in a Hilbert space $H$. Then, by the 
Spectral Theorem, $\A$ is unitarily equivalent to the operator of multiplication
by $z$ in $L^2(\nu)$ for some finite compactly supported positive 
Borel measure $\nu$. 
In what follows we will always identify $H$ with $L^2(\nu)$ and $\A$ with multiplication 
by the variable $z$. 

For $a, b\in H$ consider the rank one perturbation $\LL
= \A + a\otimes b$ of $\A$, 
$$
 \LL x = \A x + (x,b)a, \qquad x\in H.
$$
The goal of the present paper is to study the spectral properties of 
rank one perturbations in the case when $\A$ is compact. 
In particular, we are interested 
in completeness of (generalized) eigenvectors of $\LL$ 
(in which case we say that $\LL$ {\it is complete}), 
relations between completeness of $\LL$
and its adjoint $\LL^*$, and the spectral synthesis for $\LL$. 
Earlier, in \cite{by} we considered rank one perturbations 
of selfadjoint operators. Here a unified treatment for the normal operators case
will be presented. Not only will we extend the results of \cite{bbb, by} 
to the case of normal operators, 
but also give substantially simplified proofs of several results 
from \cite{bbb, by}.

One of the main novel features of this paper is the proof of the fact
that, for a rank one perturbation, the lattice of 
all its invariant subspaces with fixed spectral part is totally ordered. 
This result is essentially new even for perturbations of selfadjoint 
operators. For the case of normal operators we will have to impose some 
conditions on the spectrum of the unperturbed operator. 
\medskip

\subsection{Notations.}
In what follows we write $U(x)\lesssim V(x)$ if 
there is a constant $C$ such that $U(x)\leq CV(x)$ holds for all $x$ 
in the set in question. We write $U(x)\asymp V(x)$ if both $U(x)\lesssim V(x)$ and
$V(x)\lesssim U(x)$. The standard Landau notations
$O$ and $o$ also will be used.

For an entire function $f$ we denote by $f^*$ the conjugate entire function
$f^*(z) = \overline{f(\bar z)}$.
The zero set of an entire function $f$ (ignoring multiplicities) 
will be denoted by $\mathcal{Z}_f$. We denote by $D(z,R)$ the disc with 
center $z$ of radius $R$. By $\mathcal{P}_n$ we denote the set of 
all polynomials of degree at most $n$. 
By $\mathbb{N}_0$ we denote the set of $n\in \mathbb{Z}$ such that $n\ge 0$.
\medskip

\subsection{Acknowledgements.} This paper is a continuation
of our work with Dmitry Yakubovich on spectral theory 
of rank one perturbations of selfadjoint operators which we started 10 years ago. 
I am deeply grateful to Dmitry for introducing me to the 
field of functional models and for our continued fruitfull colaboration. 
His influence on the subject and the results of the present paper is very important.
I also want to thank my friends and coauthors Evgeny Abakumov,
Yurii Belov and Alexander Borichev for their numerous ideas and insights 
which are widely used in this paper. And I want to express my deep gratitude
to my Teachers -- the late Professor Victor Havin and Professor 
Nikolai Nikolski -- who guided me through the exciting world of Spectral 
Function Theory.
\bigskip


\section{Main results.} 
In what follows we use the following notation. 
Given a bounded linear operator $\mathcal{U}$ we denote by 
$\EE(\UU)$ the subspace
generated by all generalized eigenvectors (also known as {\it root vectors}) 
of $\UU$. Similarly, 
given a $\UU$-invariant subspace $\MM$, we denote by $\EE(\MM, \UU)$ its 
{\it spectral part}
$$
\EE(\MM, \UU) = \ospan \Big\{x\in \MM: x\in \bigcup\limits_{\lambda, n} \Ker\, 
(\UU-\lambda I)^n\Big\}.
$$
With this notation, 
the spectral synthesis problem for $\UU$ is whether $\MM = \EE(\MM, \UU)$
for any $\UU$-invariant subspace $\MM$. 

Now we are able to state the main results of the paper. 
As we will see, there are two (apparently different) reasons 
for completeness or spectral synthesis for rank one perturbations 
(sometimes understood up to finite-dimensional complement): {\it nonvanishing moments}
and {\it domination}.

Throughout this section we always assume that 
$\A$ is a cyclic compact normal operator, i.e., 
the operator of multiplication by $z$ in $H=L^2(\nu)$, 
where $\nu = \sum_n \nu_n \delta_{s_n}$, $s_n\in \co$, $s_n \ne 0$. 

We will impose one more restriction on $\A$: 
$$
\text{there exists}\ \ p>0\ \  \text{such that} \ \ \ 
\sum_n |s_n|^p <\infty,
$$
that is, $\A$ belongs to some Schatten ideal 
$\mathfrak{S}_p$. Equivalently, this means that the sequence 
$t_n = s_n^{-1}$ 
has a finite convergence exponent and, thus, is 
the zero set of some entire function of finite order.
All main theorems except Theorems \ref{bio2}, \ref{coun} and 
\ref{ord1} apply only to this case.

We identify the elements of $H$ with sequences. For 
$a = (a_n), b = (b_n) \in H$ we consider the associated
rank one perturbation of $\A$, 
$$
\LL = \A + a\otimes b.
$$

We write $a\in z L^2(\nu)$, if $a\in \A H$, that is, 
$\sum_n |a_n|^2 |s_n|^{-2}\nu_n =\sum_n |a_n|^2 |t_n|^2\nu_n  <\infty$. 
However, as we will see
many results will depend on the products $a_n \bar b_n$ and so 
it will be convenient to introduce the following notation:
we write $a\in ' z L^2(\nu)$ if 
$\sum_{n: b_n \ne 0} |a_n|^2 |t_n|^2 \nu_n < \infty$,
and $b\in ' z L^2(\nu)$ if 
$\sum_{n: a_n \ne 0} |b_n|^2 |t_n|^2 \nu_n < \infty$.
\medskip


\subsection{Completeness of the operator and its adjoint.}
The first result shows that $\LL$ and $\LL^*$ are (nearly) complete
when certain moments of the sequence 
$a_n \bar b_n \nu_n$ do not vanish.

\begin{theorem}
\label{bio1}
Let $\A$ be a normal operator in the class $\mathfrak{S}_p$ 
and let $\LL = \A +a\otimes b$ be its rank one perturbation. 
\medskip

1. Assume that $\sum_n |a_n b_n t_n| \nu_n<\infty$ and 
\begin{equation}
\label{mom0}
\sum_n  a_n \bar b_n t_n  \nu_n \ne -1.
\end{equation}
Then both $\LL$ and $\LL^*$ are complete.
\medskip

2. Assume that there exists $N\in \mathbb{N}$ such that 
$\sum_n |a_n b_n| |t_n|^{N+1} \nu_n<\infty$, 
\begin{equation}
\label{mom1}
\begin{aligned}
& \sum_n  a_n \bar b_n t_n  \nu_n  = -1, \\
& \sum_n a_n \bar b_n t_n^k \nu_n  = 0, \quad k=2, \dots N, \\
& \sum_n a_n \bar b_n t_n^{N+1} \nu_n  \ne 0.
\end{aligned}
\end{equation}
Then ${\rm dim}\, \big(\EE(\LL)\big)^\perp \le N$ and  
${\rm dim}\, \big(\EE(\LL^*)\big)^\perp \le N$.

3. If under conditions \eqref{mom1} we have 
$b\notin ' zL^2(\nu)$, then $\LL$ is complete. 
If $a\notin ' zL^2(\nu)$, then $\LL^*$ is complete.
\end{theorem}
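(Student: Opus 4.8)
The plan is to build a functional model for $\LL$ as multiplication by $z$ on a Hilbert space of entire functions, reducing completeness of the root vectors to the absence of a common entire divisor among the reproducing-type kernels of the model (this is the standard de Branges-space mechanism, and presumably the model is constructed in Section 3). Concretely, to a vector $x = (x_n) \in H = L^2(\nu)$ one associates the ``generating function'' obtained by a Cauchy-type transform $\Theta(z) = \prod(1 - z t_n)$-weighted sum, and one checks that the root subspace $\EE(\LL)$ is complete precisely when the functions attached to $\LL$ have no common zero outside the spectrum of $\A$; equivalently, a certain characteristic function of the perturbation does not have ``too large'' a set of zeros. The quantity $1 + \sum_n a_n \bar b_n t_n \nu_n$ is, up to normalization, the value of this characteristic function at $z = \infty$ (or at $0$ after the change of variable $z \mapsto z^{-1}$ sending $s_n \to t_n$), so its non-vanishing, as in \eqref{mom0}, means the characteristic function does not acquire a spurious zero there; this yields completeness of $\LL$, and the computation for $\LL^*$ is symmetric since passing to the adjoint swaps the roles of $a$ and $b$ (with complex conjugation) and of the model space with a dual one.

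For part 2, I would run the same argument but track the order of vanishing of the characteristic function at the distinguished point. Under \eqref{mom1} the first moment forces a zero there, the vanishing of the moments for $k = 2,\dots,N$ shows this zero has order at most $N$, and the last non-vanishing moment pins the order down to exactly $N$. Since each unit of vanishing of the characteristic function at a point not in $\sigma(\A)$ contributes at most one dimension to $\big(\EE(\LL)\big)^\perp$ — this is the codimension-counting lemma that should be available from the model in Section 3 (or 4) — we get $\dim \big(\EE(\LL)\big)^\perp \le N$, and again $\le N$ for $\LL^*$ by the dual construction. The summability hypotheses ($\sum |a_n b_n t_n|\nu_n < \infty$ in part~1, $\sum |a_n b_n||t_n|^{N+1}\nu_n < \infty$ in part~2) are exactly what is needed to make the relevant Cauchy transforms / power series converge and to justify expanding the characteristic function to order $N+1$ near the distinguished point.

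Part 3 is the most delicate point. Here the moments \eqref{mom1} hold, so the characteristic function genuinely vanishes to order $N$ at the distinguished point, and the naive count only gives codimension $\le N$. The extra hypothesis $b \notin {}' zL^2(\nu)$, i.e. $\sum_{n: a_n \ne 0} |b_n|^2 |t_n|^2 \nu_n = \infty$, must be used to show that the candidate elements of $\big(\EE(\LL)\big)^\perp$ — which correspond to the would-be ``defect'' functions produced by the vanishing at the distinguished point — are in fact not square-integrable against $\nu$, hence not genuine elements of $H$; so the actual defect subspace is trivial. Concretely, I expect one writes down explicitly the vector in $H$ that would be orthogonal to $\EE(\LL)$ (something like $n \mapsto c\, \bar b_n t_n \nu_n$-type expression, or a finite linear combination of such with shifted powers of $t_n$), and divergence of the series $\sum_{n : a_n \ne 0}|b_n|^2|t_n|^2\nu_n$ shows this vector has infinite norm, a contradiction; the restriction to $\{n : a_n \ne 0\}$ is what matters because the perturbation only ``sees'' those coordinates. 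The main obstacle will be precisely this step — identifying the correct form of the hypothetical defect vector from the model and showing that the primed condition $b \notin {}' zL^2(\nu)$ (rather than the unprimed $b \notin zL^2(\nu)$) is exactly enough to rule it out, and symmetrically for $a \notin {}' zL^2(\nu)$ and $\LL^*$.
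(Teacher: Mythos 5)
Your outline points in the same general direction as the paper's argument: one passes to the model operator $\mathcal{T}_G$ in a Cauchy--de Branges space $\mathcal{H}(T,A,\mu)$, the orthogonal complement of $\EE(\LL^*)$ is identified with the set of entire functions $U$ such that $GU\in\mathcal{H}(T,A,\mu)$, the moment conditions govern the rate of decay of $G/A$ at infinity (the distinguished point after the change of variable $z\mapsto z^{-1}$), and for part 3 the primed non-integrability condition forces $G\notin\mathcal{H}(T,A,\mu)$ (this is exactly Lemma \ref{gin} combined with the norm formula of Lemma \ref{gr2}), which kills the residual polynomial defect. The reduction of the statement for $\LL$ to that for $\LL^*$ by swapping the roles of $a$ and $b$ is also as in the paper.

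However, there is a genuine gap at the central step, which you delegate to a ``codimension-counting lemma that should be available from the model.'' No such off-the-shelf lemma exists: proving that each unit of vanishing of $G/A$ at infinity contributes at most one dimension to the defect is precisely the hard part, and it is here that the paper's new ingredient enters. Since $\{t_n\}$ is an arbitrary sequence in $\CC$ (no rays, no reality assumption), one cannot obtain pointwise asymptotics of the Cauchy transform $\sum_n a_n\bar b_n t_n^{N+1}\nu_n\,(z-t_n)^{-1}$ everywhere; the paper instead uses Lemma \ref{verd} (from \cite{bbb-fock}) to get the lower bound $|G(z)|\gtrsim |z|^{-N}|A(z)|$ only outside a set of zero area density, combines it with the upper bound $|GU|=o(|A|)$ outside another zero-density set (Lemma \ref{gr2}), and then invokes Theorem \ref{dens} --- the extension of Liouville's theorem to entire functions of finite order that are bounded outside a set of zero area density --- to conclude that $U$ is a polynomial of degree at most $N-1$ (respectively, $U\equiv 0$ in part 1). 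This is also where the Schatten-class hypothesis is used: it guarantees that $A$, and hence $U$, has finite order, without which Theorem \ref{dens} is inapplicable. Your proposal never addresses how to control $U$ on the exceptional set where the Cauchy transform estimates fail, and without the zero-density Liouville mechanism the argument only works for spectra with special geometry, which is exactly the restriction the theorem is designed to remove.
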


Statement 1 of Theorem \ref{bio1} is a variation on the Keldysh--Matsaev theorems 
which deal with the so-called weak perturbations of the form $\A(I+S)$ or $(I+S)\A$ 
where $S$ is a compact operator of some class (note, however, that 
perturbations satisfying \eqref{mom0} need not be weak). For related results see
\cite[Theorem 1.1, Proposition 3.1]{by}. A result similar to Statement 2 
was proved in \cite[Theorem 1.1]{by1}. However, in all these results 
a restriction on geometry of the spectrum $\{s_n\}$ was imposed -- it was either real 
or contained in a finite union of rays. 
Here we get rid of any geometrical restrictions
by using new estimates of Cauchy transforms
of planar measures from \cite{bbb-fock} (see Subsection \ref{prel}). 

It was shown in \cite[Theorem 1.3]{by1} that for Hadamard-lacunary spectra   
one can prove a result converse to Statement 2 in Theorem \ref{bio1}:
if the operator is incomplete with infinite defect,  then
all moments are zero. This statement is no longer true 
when the lacunarity condition is relaxed (see \cite[Theorem 1.4]{by1}).

The second result gives conditions sufficient for 
completeness of $\LL^*$ when completeness of $\LL$ is known. 
Here the crucial assumption is the domination of the vector $b$ by $a$.

\begin{theorem}
\label{bio2}
Let $\A$ be a compact normal operator \textup(not necessarily in   
$\mathfrak{S}_p$ for some $p>0$\textup).
Assume that $\LL = \A + a\otimes b$ 
is complete and there exists $N\in \mathbb{N}_0$ such that 
\begin{equation}
\label{dom}
\sum_n \frac{|b_n|^2}{|a_n|^2 |t_n|^{2N}} <\infty.
\end{equation}
Then ${\rm dim}\, (\EE(\LL^*))^\perp \le N$.

If, moreover, $a\notin z L^2(\nu)$, then $\LL^*$ is complete.
\end{theorem}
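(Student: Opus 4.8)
The plan is to realize $\LL = \A + a\otimes b$ through a functional model on a space of entire functions (the Cauchy-transform-type model developed in the paper for rank one perturbations), and to read off completeness of $\LL$ and of $\LL^*$ in terms of cyclicity/co-cyclicity of the corresponding model vectors. In such models the generalized eigenvectors of $\LL$ correspond to reproducing-kernel-type elements at the points $\mu_k \in \mathcal{Z}_A$, the spectrum of $\LL$, and the defect $(\EE(\LL^*))^\perp$ is naturally identified with the orthogonal complement of the span of the model's eigenvectors for $\LL^*$, i.e.\ with a space of functions (or sequences) vanishing on the relevant spectrum. Since $\LL$ is assumed complete, the eigenfunctions of $\LL$ are total, and the task reduces to a quantitative statement: the domination hypothesis \eqref{dom}, $\sum_n |b_n|^2 |a_n|^{-2}|t_n|^{-2N} < \infty$, forces any element annihilating all generalized eigenvectors of $\LL^*$ to lie in a space of dimension at most $N$, typically because it must be divisible by the entire function attached to $\mathcal{Z}_A \cap \{a_n \ne 0\}$ up to a polynomial factor of degree $\le N$.

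First I would write out the generating function $G$ of the model (the Cauchy transform $\sum_n \frac{a_n\bar b_n \nu_n}{z - s_n}$ suitably normalized, or rather its companion built from $t_n = s_n^{-1}$), together with the entire function $A$ vanishing exactly on $\{t_n\}$; here the compactness of $\A$ and $s_n \ne 0$ guarantee $t_n \to \infty$, though without a Schatten assumption $A$ need not have finite order — this is why Theorem \ref{bio2} is stated separately and the argument must avoid order considerations. Second, I would express completeness of $\LL$ as the statement that the only element of the model space orthogonal to all eigenvectors is $0$, and dually describe $(\EE(\LL^*))^\perp$ as isomorphic to the set of vectors $c = (c_n)$ with $\sum_n c_n \overline{a_n} t_n^j \nu_n = 0$ (or a similar moment-type condition) for appropriate ranges of $j$ — i.e.\ after factoring out the zeros coming from $\{a_n \ne 0\}$, what survives is controlled by low-degree polynomials. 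Third, I would invoke \eqref{dom} via Cauchy–Schwarz: the summability $\sum_n |b_n|^2 |a_n|^{-2}|t_n|^{-2N} < \infty$ lets one pair $b_n$ against $a_n t_n^{N} (\cdots)$ and conclude that the relevant Cauchy transform / moment functional, when multiplied by $\prod(1 - z t_n)$-type factors, extends to a function that is a polynomial of degree $\le N$; counting parameters gives $\dim(\EE(\LL^*))^\perp \le N$. Finally, for the last sentence, the extra hypothesis $a \notin z L^2(\nu)$, i.e.\ $\sum_n |a_n|^2 |t_n|^2 \nu_n = \infty$, rules out the nonzero polynomial solutions (a nonzero constant or linear combination would force exactly the square-summability that is being denied), pushing $N$ effectively down to $0$ and yielding completeness of $\LL^*$.

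The main obstacle I anticipate is the passage through the functional model without any finite-order hypothesis on $A$: much of the paper's machinery for the $\mathfrak{S}_p$ case relies on $\{t_n\}$ being a zero set of an entire function of finite order, and here one only has a compact $\A$. I expect the resolution is that Theorem \ref{bio2} does \emph{not} actually need the full model — instead one argues directly with the rank one structure of $\LL^* = \A^* + b\otimes a$, using that a vector $x$ lies in $(\EE(\LL^*))^\perp$ iff it is orthogonal to all root vectors, translating this into a statement about the sequence $x_n$ and the resolvent $(\A^* - \bar\lambda)^{-1}$, and then the domination bound \eqref{dom} is exactly what is needed to control the Cauchy transform of $x_n \overline{a_n} \nu_n$ near the (possibly very dense) spectrum. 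So the real work is the clean linear-algebra/Cauchy-transform lemma identifying $(\EE(\LL^*))^\perp$ with a finite-dimensional polynomial space under \eqref{dom}; once that is in place, both conclusions follow quickly, the second from elementary divisibility/growth considerations using $a \notin z L^2(\nu)$.
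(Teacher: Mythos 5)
Your overall framework is the right one---pass to the functional model, parametrize the annihilator of the relevant eigenvector system, and use \eqref{dom} to force that parametrizing object into a polynomial space of dimension $\le N$---and your treatment of the final ``moreover'' clause (a nonzero polynomial $S$ would force the square-summability that $a\notin zL^2(\nu)$ denies, via $S(t_n)=\bar c_n\mu_n^{-1/2}$) matches the paper. But there is a genuine gap at the central step, precisely at the point you yourself flag as the ``main obstacle.'' You assert that the domination condition lets one conclude that the relevant Cauchy transform ``extends to a function that is a polynomial of degree $\le N$,'' and you hope to avoid order considerations by ``arguing directly with the rank one structure.'' Neither of these is carried out, and without a Schatten/finite-order hypothesis no growth-based argument (Phragm\'en--Lindel\"of, zero-density Liouville, etc.) is available to conclude polynomiality: the parametrizing entire function $S$ in the identity $G(z)S(z)=A(z)\sum_n d_n/(z-t_n)$ could a priori be of the form $Q_1e^{Q_2}$ with $Q_2$ an arbitrary entire function, and \eqref{dom} by itself only tells you that $GS\in\mathcal{P}_N\mathcal{H}(T,A,\mu)$ (the paper's Lemma \ref{pol1}); it says nothing about the order of $S$.

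The paper's resolution, which is absent from your proposal, is a two-step zero-counting argument that replaces all growth estimates. First, if $S$ had $N$ or more zeros, dividing $GS\in\mathcal{P}_N\mathcal{H}(T,A,\mu)$ by the corresponding degree-$N$ polynomial would produce a nonzero element of $\mathcal{H}(T,A,\mu)$ vanishing on $\mathcal{Z}_G$, contradicting the completeness hypothesis (i.e., the uniqueness-set property of $\mathcal{Z}_G$); hence $S=Q_1e^{Q_2}$ with $\deg Q_1\le N-1$. Second, the parametrizing class $\mathcal{S}$ is invariant under difference quotients $S\mapsto\frac{S(z)-S(w)}{z-w}$ (Lemma \ref{raz} of the paper), and if $Q_2$ were nonconstant one could choose $w$ so that $\frac{S(z)-S(w)}{z-w}$ has infinitely many zeros, running the same contradiction again; this kills the exponential factor and yields $\mathcal{S}\subset\mathcal{P}_{N-1}$. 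Without this mechanism (or an equivalent substitute) your argument does not close, because the hypothesis ``$\A$ compact but not in any $\mathfrak{S}_p$'' is exactly the regime where the entire functions involved can have infinite order. A secondary, more cosmetic difference: the paper first passes to the adjoint statement ($\LL^*=\A^*+b\otimes a$, swapping the roles of $a$ and $b$) so that the defect in question becomes the orthogonal complement of the \emph{biorthogonal} system $\{G/(z-\lambda)\}$, which is what the $\mathcal{S}$-parametrization describes; your plan to work with $(\EE(\LL^*))^\perp=\{GU\in\mathcal{H}\}$ directly is workable in principle but does not line up with the parametrization you would need.
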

\medskip


\subsection{Spectral synthesis.}
Here we state our positive results about the possibility of the spectral synthesis. 
As in the previous section, nonvanishing moments and a domination condition will 
play the role. The first result is similar to some 
results of Markus \cite[\S 2]{markus}, but again 
we do not need any restrictions on the spectrum location.

\begin{theorem}
\label{syn1}
Let $\A$ be a normal operator in the class $\mathfrak{S}_p$ 
and let $\LL = \A +a\otimes b$ be its rank one perturbation.
Assume that either $a_n \ne 0$ for any $n$ 
and $a\in z L^2(\nu)$, or $b_n \ne 0$ for any $n$ and $b\in z L^2(\nu)$.
If \eqref{mom0} is satisfied, then $\LL$ admits spectral synthesis.
\end{theorem}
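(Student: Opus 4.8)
The plan is to realize $\LL$ in its functional model and to reduce spectral synthesis to the completeness of all restrictions $\LL|_{\MM}$ to invariant subspaces; each such restriction turns out to be again a rank one perturbation of a cyclic compact normal operator satisfying \eqref{mom0}, so that Theorem~\ref{bio1} finishes the argument. Throughout, $\phi(\zeta)=1+\sum_n\frac{a_n\bar b_n\nu_n}{s_n-\zeta}$ denotes the characteristic function of $\LL$; its zero set $\ZZ_\phi$ is the point spectrum of $\LL$, and the eigenvectors are $h_\lambda=(\A-\lambda)^{-1}a$, $\lambda\in\ZZ_\phi$. First I would make the reductions. It suffices to treat the first alternative, $a_n\neq0$ for all $n$ and $a\in z L^2(\nu)$; the second one follows by applying the whole argument to $\LL^{\ast}=\A^{\ast}+b\otimes a$, a rank one perturbation of the cyclic compact normal operator $\A^{\ast}$ in which the vectors $a$, $b$ and hence the two alternatives are interchanged and \eqref{mom0} is preserved since $-1$ is real. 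A standard argument using that $\A$ is cyclic (so that, by Wermer's theorem, any subset of $\{s_n\}$ carries no measure orthogonal to polynomials, and the $\LL$-invariant subspace spanned by $\{e_n:b_n=0\}$, on which $\LL$ is diagonal and normal, admits spectral synthesis) lets us further assume $b_n\neq0$ for all $n$. Writing $a=\A c$, $c\in H$, we have $\LL=\A(I+c\otimes b)$, and \eqref{mom0} says exactly that $1+(c,b)=1+\sum_n a_n\bar b_n t_n\nu_n\neq0$, i.e.\ $I+c\otimes b$ is boundedly invertible and $0\notin\ZZ_\phi$; moreover $\sum_n|a_nb_nt_n|\nu_n\le(\sum_n|a_n|^2|t_n|^2\nu_n)^{1/2}(\sum_n|b_n|^2\nu_n)^{1/2}<\infty$ by Cauchy--Schwarz, so Theorem~\ref{bio1}(1) already gives that $\LL$ is complete. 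Finally, $\LL$ admits spectral synthesis if and only if $\LL|_{\MM}$ is complete for every invariant $\MM$: the root vectors of $\LL|_{\MM}$ are exactly the root vectors of $\LL$ lying in $\MM$, whose closed span is by definition $\EE(\MM,\LL)$.

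So it remains to show that $\LL|_{\MM}$ is complete for every $\MM$, and for this I would use the functional model of the earlier sections. The transform $x\mapsto\bigl(x,(\A^{\ast}-\bar\zeta)^{-1}b\bigr)=\sum_n\frac{x_n\bar b_n\nu_n}{s_n-\zeta}$, followed by the change of variable $\zeta\mapsto\zeta^{-1}$ and multiplication by the canonical product $G$ over the sequence $\{t_n\}$ (which has finite convergence exponent because $\A\in\mathfrak{S}_p$), is injective — $b$ being a cyclic vector for $\A^{\ast}$ since $\A$ is cyclic and $b_n\neq0$ for all $n$ — and identifies $H$ unitarily with a Hilbert space $\Hg$ of entire functions generalizing de Branges spaces, under which $\LL$ becomes a model operator $\LL_{\Hg}$ and the eigenvectors $h_\lambda$ become reproducing-kernel-type elements of $\Hg$ attached to the images of the points of $\ZZ_\phi$. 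The assumptions $a_n\neq0$ and $a\in z L^2(\nu)$ serve here to pin $\Hg$ down as the \emph{minimal} de Branges-type space carrying the spectral data, so that $\Hg$ has no zero-free (``singular'') divisor and the point spectrum of $\LL_{\Hg}$ is exactly $\ZZ_\phi$. One then establishes that every $\LL_{\Hg}$-invariant subspace $\MM$ is divisorial: $\MM=\{F\in\Hg:F/\Theta\ \text{is entire of the appropriate class}\}$ for an entire function $\Theta$ whose zeros lie in $\ZZ_\phi$ and which has no singular factor. Division by $\Theta$ identifies $\MM$ isometrically with another de Branges-type space and conjugates $\LL_{\Hg}|_{\MM}$ to an operator of exactly the same type — a rank one perturbation $\A'+a'\otimes b'$ of multiplication by $z$ in some $L^2(\nu')$, with characteristic function $\phi/\Theta$ — which again satisfies the hypotheses of Theorem~\ref{bio1}(1): its spectrum $\ZZ_\phi\setminus\ZZ_\Theta$ together with $\{s_n\}$ has finite convergence exponent (the eigenvalues of the compact operator $\LL$ do), and the analogue of \eqref{mom0} persists, since the nonzero value $\phi(0)=1+\sum_n a_n\bar b_n t_n\nu_n$ is only altered by the nonzero factor contributed by the zeros of $\Theta$, all of which lie in $\ZZ_\phi$ and hence differ from $0$. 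Therefore Theorem~\ref{bio1}(1) gives that $\LL_{\Hg}|_{\MM}$, and hence $\LL|_{\MM}$, is complete, and by the reduction above $\LL$ admits spectral synthesis.

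The main obstacle is precisely this last passage: constructing the functional model, establishing the divisorial description of the invariant subspaces of $\LL_{\Hg}$, and, above all, verifying that the quotient $\Theta^{-1}\LL_{\Hg}|_{\MM}\Theta$ is once more a rank one perturbation of a cyclic compact normal operator for which \eqref{mom0} survives — equivalently, excluding any zero-free (Volterra-type) factor of the generating function $\Theta$, that is, any nontrivial quasinilpotent quotient of $\LL$. This is exactly the point at which $a\in z L^2(\nu)$ together with $a_n\neq0$ for all $n$ is indispensable, since it is what forces $\Hg$ to be the minimal de Branges-type space over the data. A secondary, purely bookkeeping, difficulty is to carry generalized eigenvectors through the whole argument so as to accommodate possible Jordan blocks at multiple zeros of $\phi$.
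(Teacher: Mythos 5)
Your reduction of spectral synthesis to the completeness of $\LL|_{\MM}$ for every invariant subspace $\MM$ is correct, and so is the observation that Cauchy--Schwarz plus \eqref{mom0} puts $\LL$ itself under Theorem \ref{bio1}(1). But the entire weight of the proof then rests on the assertion that every invariant subspace of the model operator is divisorial, $\MM=\{F:\,F/\Theta\ \text{entire of the appropriate class}\}$, and that division by $\Theta$ conjugates $\LL|_{\MM}$ to \emph{another} rank one perturbation of a compact cyclic normal operator for which the analogue of \eqref{mom0} survives. You acknowledge this is ``the main obstacle,'' but no argument is given, and as a general structural statement it is false: Section \ref{th26} of the paper constructs invariant subspaces $\MM$ of $\mathcal{T}_G$ with ${\rm dim}\,(\MM\ominus\EE(\MM,\LL))$ equal to any prescribed $N\le\infty$, for which $\LL|_{\MM}$ is an incomplete operator. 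So any such ``quotient'' principle must fail in general, and the hypotheses $a\in zL^2(\nu)$, $a_n\ne 0$ and \eqref{mom0} would have to enter the structural analysis in an essential, quantitative way; your proposal only gestures at this via ``minimality'' of the space $\Hg$, without defining that notion or showing how it excludes the bad subspaces. There is also a technical obstruction you do not address: an element of $\MM$ divided by $\Theta$ has the form $\frac{A(z)}{\Theta(z)}\sum_n\frac{c_n\mu_n^{1/2}}{z-t_n}$, which is not of the form $A'(z)\sum_n\frac{d_n(\mu'_n)^{1/2}}{z-t'_n}$ required of a Cauchy--de Branges space, so even the identification of the compressed operator as a model operator $\mathcal{T}_{G/\Theta}$ is unjustified. (The preliminary reduction to $b_n\ne 0$ via Wermer's theorem is also shakier than you suggest, since invariant subspaces of $\LL$ do not split along the decomposition $H=H_1\oplus H_2$, but that is a secondary issue.)

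The paper's proof avoids any description of the invariant subspace lattice. By Markus's theorem, synthesis is equivalent to completeness of every mixed system $\{k_\lambda\}_{\lambda\in\Lambda_1}\cup\{\frac{G}{z-\lambda}\}_{\lambda\in\Lambda_2}$, $\Lambda_1\cup\Lambda_2=\mathcal{Z}_G$. A function orthogonal to such a system is encoded by a pair of entire functions $(S_1,S_2)$ satisfying two interpolation identities \eqref{para2}, whose product obeys $\frac{S_1S_2}{A}=\sum_n\frac{|c_n|^2}{z-t_n}+R$. The hypothesis $b\in zL^2(\nu)$ gives $\sum_n\mu_n<\infty$, forcing the right-hand side of the resulting identity \eqref{srt} to be $o(|z|^{-1})$ off a set of zero area density, while \eqref{mom0} gives $|G|\asymp|A|$ off such a set, so the left-hand side is $\gtrsim|z|^{-1}$; together with Theorem \ref{dens} (to kill $R$) this yields the contradiction. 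If you want to salvage your route, you would essentially have to prove the structure theorem for invariant subspaces first, which is harder than the theorem itself.
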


\begin{theorem}
\label{syn2}
Let $\A$ be a normal operator in the class $\mathfrak{S}_p$ 
and let $\LL = \A +a\otimes b$ be its rank one perturbation.
Let either $a_n \ne 0$ for any $n$ or $b_n \ne 0$ for any $n$. 
Assume that there exists $N\in \mathbb{N}_0$ such that 
$\sum_n |a_n b_n| |t_n|^{N+1} \nu_n<\infty$ 
and either $N=0$ and condition \eqref{mom0} is satisfied 
or $N\ge 1 $ and conditions \eqref{mom1} are satisfied. Then 
for any $\LL$-invariant subspace $\MM$ we have
$$
{\rm dim}\, \big(\mathcal{M} \ominus \EE(\MM, \LL) \big) \le (N+1)^2.
$$
\end{theorem}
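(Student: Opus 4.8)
The plan is to deduce the estimate, via the functional model of the previous sections, from a completeness statement for the restriction $\LL|_\MM$, and then to quote Theorem~\ref{bio1}. The first, elementary, step is the remark that a generalized eigenvector of $\LL$ lying in an $\LL$-invariant subspace $\MM$ is precisely a generalized eigenvector of $\LL|_\MM$; hence $\EE(\MM,\LL)=\EE(\LL|_\MM)$, and it suffices to prove that $\dim\bigl(\MM\ominus\EE(\LL|_\MM)\bigr)\le(N+1)^2$.

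The heart of the argument is the analysis of $\LL|_\MM$. Note that $\A$ need not preserve $\MM$, so $\LL|_\MM$ is not a priori a rank one perturbation of a normal operator; this is exactly where the functional model is needed. Using it, I would show that every $\LL$-invariant subspace $\MM$ splits, modulo a finite-dimensional summand $\MM_0$ of dimension bounded in terms of $N$, as a model subspace on which $\LL$ acts as a rank one perturbation $\widetilde\LL=\widetilde\A+\tilde a\otimes\tilde b$ of a cyclic compact normal operator $\widetilde\A$ again lying in $\mathfrak{S}_p$. In the model an invariant subspace corresponds to restricting $\nu$ to a subset $E$ of its atoms, together with a de~Branges-type divisor which is responsible for $\MM_0$; here the hypothesis that $a_n\ne0$ for every $n$, or $b_n\ne0$ for every $n$, is used to forbid an atom from being only partially retained and to keep $\widetilde\A$ cyclic and in $\mathfrak{S}_p$. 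The decisive point is that the new data inherit a hypothesis of the form required by Theorem~\ref{bio1}: on the retained atoms $n\in E$ the products $\tilde a_n\overline{\tilde b_n}\tilde\nu_n$ differ from $a_n\bar b_n\nu_n$ only by factors controlled by the divisor, so that $\sum_{n\in E}|\tilde a_n\tilde b_n||\tilde t_n|^{N+1}\tilde\nu_n<\infty$, and either $\sum_{n\in E}\tilde a_n\overline{\tilde b_n}\tilde t_n\tilde\nu_n\ne-1$ or the relations \eqref{mom1} hold for $\widetilde\LL$ with a parameter at most $N$. Extracting this inheritance from the model — in particular controlling the effect of the divisor on the moments and ruling out degenerate configurations (for instance, all moments of $\widetilde\LL$ vanishing, which by the remark after Theorem~\ref{bio1} would be compatible with infinite defect) — is the step I expect to be the main obstacle, and it is where the explicit description of $\MM$ and of the eigenvectors of $\LL$ supplied by the model does the real work.

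Granting this, the conclusion follows quickly. The first part of Theorem~\ref{bio1} yields completeness of $\widetilde\LL$ when the first moment is $\ne-1$, and the second part yields $\dim\bigl(\EE(\widetilde\LL)\bigr)^\perp\le N$ under \eqref{mom1}. Since, through the identification, the generalized eigenvectors of $\widetilde\LL$ are generalized eigenvectors of $\LL$ lying in $\MM$, we obtain $\EE(\widetilde\LL)\subseteq\EE(\MM,\LL)$; combining the defect of $\widetilde\LL$ with the dimension of the correction $\MM_0$ and bookkeeping the two contributions gives the (presumably non-sharp) bound $(N+1)^2$. In the special case $N=0$ this recovers the assertion that, under \eqref{mom0}, every $\LL$-invariant subspace is spanned, up to defect one, by the root vectors it contains — the unconditional analogue of Theorem~\ref{syn1}.
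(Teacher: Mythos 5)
There is a genuine gap, and it sits exactly where you flag ``the main obstacle'': the claim that every $\LL$-invariant subspace $\MM$ splits, modulo a finite-dimensional $\MM_0$ controlled by $N$, into a piece on which $\LL$ acts as a rank one perturbation $\widetilde\LL$ of a cyclic normal operator obtained by ``restricting $\nu$ to a subset $E$ of its atoms together with a de Branges-type divisor.'' No such structure theorem for invariant subspaces is available, and it cannot be taken for granted: it would amount to saying that every invariant subspace is spectral up to a finite-dimensional correction of a prescribed shape, which is essentially the conclusion of the theorem rather than a tool for proving it. The counterexamples of Theorem~\ref{coun} show that, when the moment hypotheses fail, invariant subspaces can differ from their spectral part by an \emph{infinite}-dimensional piece, so any such splitting must already exploit the moment conditions \eqref{mom0}/\eqref{mom1}; your sketch gives no mechanism for how the model would produce the splitting or how the moments would enter it. In the model, an invariant subspace of $\mathcal{T}_G$ is only known to be a nearly invariant subspace of $\mathcal{H}(T,A,\mu)$ sandwiched as in \eqref{up} between $\ospan\{G/(z-\lambda):\lambda\in\Lambda_2\}$ and $(\ospan\{k_\lambda:\lambda\in\Lambda_1\})^\perp$; describing these subspaces is precisely the open problem addressed (only partially, and under extra hypotheses) by the Ordering Theorems of Section~\ref{th27}. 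Your preliminary reduction $\EE(\MM,\LL)=\EE(\LL|_\MM)$ is fine, but everything after it rests on the unproven structural claim.

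The paper's route avoids this entirely. From the sandwich \eqref{up} one gets $\dim(\MM\ominus\EE(\MM,\LL))\le\dim(\mathcal{K}(\Lambda_1,\Lambda_2))^\perp$, where $\mathcal{K}(\Lambda_1,\Lambda_2)$ is the mixed system \eqref{mix}; so it suffices to bound the codimension of every mixed system, with no need to understand $\MM$ itself. That bound is obtained by parametrizing $(\mathcal{K}(\Lambda_1,\Lambda_2))^\perp$ by pairs of entire functions $(S_1,S_2)$ satisfying the two interpolation formulas \eqref{para2}, showing via the moment hypotheses and the Cauchy-transform asymptotics (Lemma~\ref{verd}, Theorem~\ref{dens}) that the polynomial term $R$ in $S_1S_2/A$ has degree at most $N-1$, proving that two mutually orthogonal elements whose cross-products $S_1\tilde S_2/A$ and $\tilde S_1S_2/A$ carry no polynomial term cannot coexist, and finally a linear-algebra count showing that a complement of dimension exceeding $(N+1)^2$ would force such a pair to exist. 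If you want to salvage your plan, you would have to prove the splitting of $\MM$ from scratch, and I do not see how to do that without reproving something at least as strong as the mixed-system bound.
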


To state a ``domination'' result we need the following definition. We say that 
the sequence $T = \{t_n\}\subset \co$ is {\it power separated}
if there exists $M>0$ 
such that
\begin{equation}
\label{pow}
{\rm dist}\, (t_n, T\setminus \{t_n\}) \gtrsim |t_n|^{-M}.
\end{equation}
Condition \eqref{pow} implies that $T$ has a finite convergence exponent.

\begin{theorem}
\label{syn3}
Let $\LL = \A +a\otimes b$ be complete and 
let $T=\{t_n\}$ be power separated with exponent $M$. Assume that 
$a$ and $b$ satisfy \eqref{dom} and \textup(for the same $N$\textup)
\begin{equation}
\label{dom22}
|a_n|^2\nu_n \gtrsim |t_n|^{-2N-2}.
\end{equation}
Then for any $\LL$-invariant subspace $\MM$ we have
$$
{\rm dim}\, \big(\mathcal{M} \ominus \EE(\MM, \LL) \big) \le (M+N+1)^2.
$$
\end{theorem}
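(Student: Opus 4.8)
I would prove Theorem~\ref{syn3} by constructing (via the functional model alluded to in the abstract and developed in Sections 3--6) an entire-function realization in which $\LL$ acts in a de~Branges-type space $\Hg$ of entire functions, with the root vectors of $\LL$ corresponding to reproducing kernels $k_{\lambda}$ at the zeros $\lambda$ of the ``characteristic function'' $A$ (the entire function whose zero set is the perturbed spectrum). The completeness hypothesis means precisely that these kernels span $\Hg$, equivalently that $A$ has no ``extra'' factorization obstructing density. An $\LL$-invariant subspace $\MM$ then corresponds, as in the selfadjoint theory of \cite{by}, to a subspace of $\Hg$ that is ``division-invariant'' modulo the perturbation, and its spectral part $\EE(\MM,\LL)$ corresponds to the closed span of those kernels $k_{\lambda}$ with $\lambda$ in a subset $\Lambda'$ of the full zero set $\Lambda=\ZZ_A$. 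So the theorem reduces to an estimate: the codimension of $\closspan\{k_\lambda:\lambda\in\Lambda'\}$ inside $\MM$ is at most $(M+N+1)^2$.

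First I would pin down the model: using the domination hypothesis \eqref{dom} together with the lower bound \eqref{dom22} on $|a_n|^2\nu_n$, one controls the relative sizes of the vectors $a,b$ so that the transfer function of $\LL$ is a ratio $B/A$ of entire functions of finite order, with $T=\{t_n\}$ (hence $\Lambda$) of finite convergence exponent; here the power-separation \eqref{pow} is what guarantees the Cauchy-type transforms and interpolation series involved converge and admit the polynomial bounds needed later. Then I would show that passing from $\MM$ to $\EE(\MM,\LL)$ costs only a quotient of the form $\Hg_0/\Hg_1$ where both spaces are model spaces $K_{\Theta}$ with $\Theta$ a ratio of the ``missing'' canonical products; the key algebraic point (as in \cite{by}) is that an invariant subspace with spectral part indexed by $\Lambda'$ is squeezed between the smallest such space (closed span of the $k_\lambda$, $\lambda\in\Lambda'$) and a space obtained by multiplying by a single outer-type factor of controlled degree.

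The heart of the matter is the \textbf{dimension count}. I expect it to go as follows: the ``defect'' $\MM\ominus\EE(\MM,\LL)$ injects into a space of entire functions $g$ that vanish on $\Lambda\setminus\Lambda'$ but whose growth is restricted both by membership in $\Hg$ (which, under \eqref{dom22}, forces decay governed by the canonical product of exponent $\le N$ plus the genus) and by the division properties forced by invariance, which under power-separation \eqref{pow} costs a polynomial factor of degree $\le M$. Combining these, such $g$ must be a polynomial multiple of a fixed entire function, the polynomial having degree $\le M+N$, so the space has dimension $\le M+N+1$; the squared exponent $(M+N+1)^2$ then arises exactly as in Theorem~\ref{syn2}, because one must control the defect both ``from the side of $\MM$'' and ``from the side of the annihilator'' (i.e., in $\Hg$ and in a companion space for $\LL^*$), and the two one-sided bounds multiply. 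The main obstacle, I anticipate, is making the last step rigorous without any geometric restriction on $\{s_n\}$: one needs that a function which is in the de~Branges-type space $\Hg$, vanishes on a power-separated set off a finite-exponent sequence, and is divisible by the relevant canonical product, is necessarily a polynomial multiple of that product of the asserted degree. This is where the new Cauchy-transform estimates for planar measures from \cite{bbb-fock} (cited in Subsection~\ref{prel}) together with \eqref{pow} must be invoked to convert the analytic growth/decay information into the sharp polynomial bound, rather than the classical real-line or finite-union-of-rays arguments of \cite{by,by1}.
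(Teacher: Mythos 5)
Your overall frame is right — pass to the functional model $\mathcal{T}_G$ on $\mathcal{H}(T,A,\mu)$, squeeze $\MM$ between $\ospan\{\frac{G}{z-\lambda}:\lambda\in\Lambda_2\}$ and $(\ospan\{k_\lambda:\lambda\in\Lambda_1\})^\perp$, and bound the codimension of the resulting mixed system — and you correctly anticipate that power separation and \eqref{dom}, \eqref{dom22} enter through polynomial growth bounds. But the heart of your dimension count is not what happens, and I do not see how to make it work. You claim the defect injects into a space of entire functions that are polynomial multiples (of degree $\le M+N$) of a fixed function, giving a one-sided bound $M+N+1$ which is then "squared" by a two-sided argument. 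In the paper, the orthogonal complement of the mixed system is parametrized by \emph{pairs} of entire functions $(S_1,S_2)$ satisfying the two interpolation formulas \eqref{para2}; the individual $S_1,S_2$ are not polynomials and are not multiples of anything fixed. What is controlled is only their \emph{product}: one shows $S_1S_2/A=\sum_n\frac{|c_n|^2}{z-t_n}+R$ with $R$ entire, and the whole technical work (Lemma \ref{pol2}, which is exactly where the power separation exponent $M$ is used to bound $\sum_{m\ne n}\frac{p_m^{(2)}}{t_n-t_m}$, together with \eqref{dom} via \eqref{d1}--\eqref{d2} and \eqref{dom22}) goes into proving $GR\in\mathcal{P}_{M+N}\mathcal{H}(T,A,\mu)$. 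Completeness is then used not as a "no extra factorization" statement about $A$ but concretely: since $\mathcal{Z}_G$ is a uniqueness set, $R$ cannot have $M+N$ zeros, so $R=Pe^{Q}$, and the difference-quotient trick $\frac{R(z)-R(w)}{z-w}$ kills $e^{Q}$, leaving $R\in\mathcal{P}_{M+N-1}$. Note also that you conflate $A$ (zeros at the inverse \emph{unperturbed} spectrum $T$) with $G$ (zeros at the inverse \emph{perturbed} spectrum $\Lambda$); the kernels $k_\lambda$, $\lambda\in\mathcal{Z}_G$, are root vectors of $\LL^*$, not of $\LL$.

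The exponent $(M+N+1)^2$ then does not come from multiplying two one-sided bounds on the defect. It comes from a linear-algebra selection argument (Step 3 of the proof of Theorem \ref{syn2}): assuming the orthogonal complement has dimension $>(M+N+1)^2$, one picks an orthogonal family $\{f_j\}_{j=1}^m\cup\{g_k\}_{k=1}^{M+N+1}$ with $m>(M+N)(M+N+1)$, uses the cross-product identities $\frac{S_1^j\tilde S_2^k}{A}=\sum_n\frac{c_n^j\bar d_n^k}{z-t_n}+U_{jk}$ with $U_{jk}\in\mathcal{P}_{M+N-1}$ to find combinations $f,g$ whose cross-products have no polynomial part, and then derives a contradiction by comparing the Cauchy-transform asymptotics of $\frac{S_1S_2}{A}$ (bounded below by $|z|^{-1}$ off a zero-density set, via Lemma \ref{verd}) against those of the cross-products (which are $o(|z|^{-1})$ because $\sum_n c_n\bar d_n=0$). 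Without this bilinear structure and the asymptotic comparison, the bound does not follow; your injection into "polynomial multiples of a fixed entire function" would, if it held, give a linear rather than quadratic bound, but it does not hold, because membership in the annihilator of the mixed system constrains only the product $S_1S_2$, not the factors separately.
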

\smallskip
\noindent
{\bf Remark.} Conditions \eqref{dom} and \eqref{dom22} in Theorem \ref{syn3} 
can be replaced by a slightly stronger assumption $|a_n|^2 \nu_n \gtrsim |t_n|^{-2N}$,
which obviously implies \eqref{dom}. 
\medskip 

\subsection{Counterexamples.} 
We now turn to negative results. One of the main points of 
the present paper (as well as of \cite{by}) 
is that already for such small class as rank one perturbations
of normal operators one has a very rich and complicated 
spectral structure. We construct counterexamples showing 
that completeness of the adjoint operator or spectral 
synthesis may fail with any finite or infinite defect. 

\begin{theorem}
\label{coun}
Let $\A$ be a compact normal operator with
simple point spectrum and trivial kernel and let 
$N\in \mathbb{N} \cup \{\infty\}$. Then the following statements hold true.
\medskip

1. There exists a rank one perturbation $\LL$ of $\A$
such that $\ker \LL=\ker \LL^*=0$ and $\LL$ is complete, but $\LL^*$ 
is not complete and 
$$
{\rm dim}\, (\EE(\LL^*))^\perp = N.
$$

2. There exists a rank one perturbation $\LL$ of $\A$
such that 
\begin{enumerate} 
\item [(i)]
$\ker \LL=\ker \LL^*=0$\textup;
\item [(ii)] both $\LL$ and $\LL^*$ are complete\textup;
\item [(iii)] $\LL$ does not admit spectral synthesis
and, moreover, there exists $\LL$-invariant subspace $\MM$
such that
$$
{\rm dim}\, \mathcal{M} \ominus \EE(\MM, \LL) = N.
$$
\end{enumerate} 
\end{theorem}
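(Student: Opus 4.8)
The plan is to run everything through the functional model of Sections~3--6. There $\LL$ becomes a concrete model operator in a de Branges-type Hilbert space $\mathcal{H}$ of entire functions built from the weights $\nu_n$, from the vectors $a$ and $b$, and from an entire function $A$ with simple zero set $\{t_n\}$; the eigenvectors of $\LL$ go over to the reproducing kernels of $\mathcal{H}$ at the points of $\sigma(\LL)$, and $\sigma(\LL)$ is, up to the change of variable $z\mapsto 1/z$, the zero set of the characteristic function $\chi(z)=1+\sum_n\frac{a_n\overline{b_n}\nu_n}{s_n-z}$. A computation with Cauchy transforms --- the same one that drives the model --- shows that $x\perp\EE(\LL^*)$ if and only if the Cauchy transform $\sum_n\frac{\overline{b_n}x_n\nu_n}{s_n-z}$ vanishes on $\sigma(\LL)$ with the correct multiplicities, which forces $x_n=a_n P(s_n)$ for an entire function $P$ from an explicit de Branges-type space attached to $\sigma(\LL)$; symmetrically, $\EE(\LL)^\perp$ is described by the same formula with $a$ and $b$ interchanged. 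Thus $\dim(\EE(\LL^*))^\perp$ and $\dim(\EE(\LL))^\perp$ are the dimensions of two explicit multiplier spaces --- one governed by the ratio $b/a$ and the weights, the other by $a/b$ --- and the whole theorem reduces to constructing such multipliers with prescribed dimensions, a problem of Wolff--Borichev type. Finally, $\ker\LL=\ker\LL^*=\{0\}$ is equivalent to $1+\sum_n a_n\overline{b_n}t_n\nu_n\ne 0$, i.e.\ to \eqref{mom0}, so that side condition is built in from the start.

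For statement~1 the spectral data $\{s_n\}$ and $\{\nu_n\}$, hence $A$, are given and arbitrary, and I would construct the nonzero sequences $\{a_n\}$ and $\{b_n\}$ together with the entire function $G$ carrying $\sigma(\LL)$ so that: (i) \eqref{mom0} holds; (ii) \eqref{dom} holds with the prescribed $N$ but fails with $N-1$ in place of $N$, while $a\in z L^2(\nu)$, so that Theorem~\ref{bio2} gives $\dim(\EE(\LL^*))^\perp\le N$ and its completeness clause does not apply; (iii) the $a$-side multiplier space is trivial, i.e.\ $\LL$ is complete; and (iv) the $b$-side multiplier space contains the $N$-dimensional family $p(z)\Theta(z)/G(z)$, $p\in\mathcal{P}_{N-1}$, for a suitable fixed entire $\Theta$, which yields $\dim(\EE(\LL^*))^\perp\ge N$ and hence equality. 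Items (iii)--(iv) carry all the weight: they amount to prescribing, for the given zero set $\{t_n\}$, the masses $a_n\overline{b_n}\nu_n t_n$ of a Cauchy kernel so that it has a prescribed zero set with prescribed multiplicities on one side while the competing kernel has strictly fewer zeros, and manufacturing this left/right asymmetry over an arbitrary prescribed sequence is precisely what the Borichev-type refinements of Wolff's example (cf.~\cite{bbb-fock}) are for. For $N=\infty$ one drops (ii) and instead arranges that the $b$-side multiplier space contains $p\,\Theta/G$ for every polynomial $p$.

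For statement~2 the same dictionary reduces matters to producing an $\LL$-invariant subspace $\MM$ in which $\EE(\MM,\LL)$ has codimension exactly $N$, with both $\LL$ and $\LL^*$ complete. I would run a construction as in statement~1 but tuned so that \emph{both} one-sided multiplier spaces are trivial --- so $\LL$ and $\LL^*$ are both complete, using the borderline case $a\notin z L^2(\nu)$ in Theorems~\ref{bio1}--\ref{bio2} --- while keeping a sparse subsequence $\Lambda''\subset\sigma(\LL)$ at our disposal; set $\Lambda'=\sigma(\LL)\setminus\Lambda''$ and let $\MM$ be the invariant subspace whose model consists of the functions in $\mathcal{H}$ vanishing on $\Lambda''$. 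Then $\EE(\MM,\LL)$ is the closed span of the reproducing kernels over $\Lambda'$, and $\MM\ominus\EE(\MM,\LL)$ is again an explicit multiplier space, this time attached to $\Lambda'$; one makes it $N$-dimensional by planting, via a Borichev-type estimate localized on $\Lambda''$, the family $p(z)\Theta_0(z)/G_{\Lambda'}(z)$, $p\in\mathcal{P}_{N-1}$, inside $\MM$ but outside $\EE(\MM,\LL)$, where $G_{\Lambda'}$ is the generating function of $\Lambda'$. The Ordering Theorem~\ref{ord1} for invariant subspaces with a common spectral part keeps all the relevant subspaces in one totally ordered chain, which is convenient for reading off codimensions and for passing to $N=\infty$.

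In all cases the routine bookkeeping is the same and is harmless once the growth of $A$ and $G$ is controlled: summability of the series defining $\chi$ and $G$, entirety of the generating functions, membership of the planted functions $p\,\Theta/G$ in $\mathcal{H}$ --- equivalently an inequality of the form $\sum_n|a_n p(s_n)\Theta(s_n)/G(s_n)|^2\nu_n<\infty$ and its dual --- and $\ker\LL=\ker\LL^*=\{0\}$ via \eqref{mom0}. The genuinely delicate point, common to all parts and the one I expect to be the main obstacle, is the quantitative Wolff--Borichev construction over the \emph{prescribed} and otherwise arbitrary sequence $\{t_n\}$: it must deliver a Cauchy transform whose zero set (with multiplicities) is exactly as demanded on one side and deficient on the other, and it must come with a matching lower bound showing that the $N$ planted functions are linearly independent, lie outside the relevant span, and are joined by no further functions --- so that the defect comes out equal to $N$ rather than merely at most $N$.
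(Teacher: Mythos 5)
Your model-theoretic dictionary (eigenvectors of $\LL^*$ $\leftrightarrow$ reproducing kernels at $\mathcal{Z}_G$, defects $\leftrightarrow$ multiplier spaces of entire functions as in Subsection \ref{param}) is the right framework and matches the paper's. But the proposal has a genuine gap at its center: you defer the actual construction --- a generating function $G$ over the \emph{prescribed, arbitrary} inverse spectrum $\{t_n\}$ whose one-sided defect is exactly $N$ --- to an unspecified ``quantitative Wolff--Borichev construction,'' and you yourself flag it as the main obstacle. The reference \cite{bbb-fock} supplies growth estimates for Cauchy transforms (Lemma \ref{verd}, Theorem \ref{dens}), not counterexample constructions over arbitrary sequences, so nothing you cite delivers this step. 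The paper sidesteps the difficulty entirely with the reduction you are missing: since $\LL_1\oplus\A_2$ is still a rank one perturbation of $\A$ (Lemma \ref{bro1}), it suffices to build the example on an arbitrarily sparse, hence Hadamard-lacunary, part of the inverse spectrum. On a lacunary $T$ everything becomes explicit and elementary: take $\mu_n=|t_n|^{-2N}$, shift the zeros to $\tilde t_n=t_n+\tfrac12$, and set $G=\tilde A/P$ with $\deg P=N$; the two-sided estimate \eqref{lac} then gives completeness of $\{k_\lambda\}$ and identifies the orthogonal complement of the biorthogonal system with $\mathcal{P}_{N-1}$, so the defect is exactly $N$ with no Wolff-type input. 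The infinite-defect and no-synthesis cases are handled by splitting $T$ into a sparse and a dense part and using the lacunary uniqueness Lemma \ref{newl}; your idea for statement 2 (plant the defect on a sparse $\Lambda''$) is in the right spirit but again has no construction behind it, and the appeal to the Ordering Theorem \ref{ord1} plays no role here.

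A second, concrete error: you assert that $\ker\LL=\ker\LL^*=0$ is \emph{equivalent} to \eqref{mom0} and propose to impose \eqref{mom0} from the start. By Lemmas \ref{gin} and \ref{eig}, $\ker\mathcal{T}_G=0$ iff $G\notin\mathcal{H}(T,A,\mu)$, which fails only when \emph{both} $a\in{}'zL^2(\nu)$ and \eqref{degen} hold; so \eqref{mom0} is sufficient but not necessary, and the equivalence is false. Worse, \eqref{mom0} is stated under $\sum_n|a_nb_nt_n|\nu_n<\infty$, and in that regime Theorem \ref{bio1}(1) forces \emph{both} $\LL$ and $\LL^*$ to be complete --- directly contradicting the incompleteness of $\LL^*$ you are trying to arrange in statement 1. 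In the paper's example the first moment converges absolutely and equals $-1$, i.e.\ \eqref{mom0} deliberately \emph{fails} (while $G\notin\mathcal{H}(T,A,\mu)$ because $a\notin zL^2(\nu)$), and triviality of the kernels is obtained that way. As written, your plan for statement 1 is internally inconsistent and the core construction is not supplied.
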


Note that, for a bounded operator
$\mathcal{B}$,  if $\ker \mathcal{B} \ne 0$, but $\ker \mathcal{B}^* = 0$, then 
$\mathcal{B}^*$ is not complete. An explicit example of a complete compact operator 
$\mathcal{B}$ such that $\ker \mathcal{B} = \ker \mathcal{B}^*=0$, while $\mathcal{B}^*$
is not complete, was given by Deckard, Foia\c{s} and Pearcy  \cite{DeckFoPea}.
However, in their examples one cannot conclude that
the corresponding operator is a finite rank perturbation of a normal
operator. Surprisingly, one can find such examples
among rank one perturbations of
a compact normal operator with an arbitrary spectrum.

A concrete example of a rank one perturbation $\LL$ of a compact normal operator 
such that $\ker \LL = \ker \LL^*= 0$, $\LL$ is complete, but
$\LL^*$ is not, can be extracted from the results 
by A.A.~Lunyov and M.M.~Malamud \cite[Section 4]{LunMal15}. 
The operator in this example is realized as 
the inverse to a two-dimensional 
first order differential operator with specially chosen boundary conditions. 
A version of this example was presented in \cite[Appendix 1]{by}.

At the same time, Lunyov and Malamud \cite{LunMal14} showed that for a class of 
dissipative realizations of Dirac-type differential operators, completeness property 
is equivalent to the spectral synthesis property.
\medskip


\subsection{Ordered structure of invariant subspaces.}
Assume that both a compact operator $\LL$ and its adjoint $\LL^*$ are complete, 
but the spectral synthesis fails. If we denote by $\{x_n\}_{n\in N}$ the 
(generalized) eigenvectors 
of $\LL$ and by $\{y_n\}$ the eigenvectors of $\LL^*$ this means that
there exist an $\LL$-invariant subspace $\MM$ and $N_1\subset N$ such that
$x_n \in \MM$ if and only if $n\in N_1$, but $\ospan \{x_n: \, n\in N_1\} \ne \MM$. 
It is not difficult to show that 
$\MM^\perp \supset \{y_n:\, n\in N\setminus N_1 \}$ 
(see, e.g., the proof of Lemma 4.2 in \cite{markus}). Hence, 
\begin{equation}
\label{up}
\ospan \{x_n: \, n\in N_1\} \subset \MM \subset \{y_n:\, n\in N_2\}^\perp, 
\end{equation}
where $N_2 = N\setminus N_1$.
We say that all invariant subspaces $\MM$ satisfying \eqref{up} 
for a fixed set $N_1$
have {\it common spectral part} $\ospan \{x_n: \, n\in N_1\}$.

A natural problem is to describe all non-spectral invariant subspaces
or, at least, to find some structural properties of them. Apparently, 
in general, one cannot expect any structure of the lattice. However, 
for the case of rank one perturbations of normal operators,
there are good reasons to believe that the set of invariant subspaces $\MM$
satisfying \eqref{up} (i.e., the subspaces with the common
spectral part) is totally ordered by inclusion, that is, for any $\MM_1, \MM_2$
satisfying \eqref{up} one has $\MM_1\subset \MM_2$ or $\MM_2\subset \MM_1$. 
We state this as a conjecture.
\medskip
\\
{\bf Conjecture.} {\it  Let $\LL$ be a rank one perturbation of 
a compact normal operator. Then the set of all invariant subspaces 
with fixed common spectral part is totally ordered by inclusion.}
\medskip

We prove this conjecture in two cases: 
for rank one perturbations of compact {\it selfadjoint} operators 
(without any additional restrictions on the spectrum 
such as membership in a Schatten class)
and for the case 
of Schatten-class normal operators under certain conditions on 
the location of the spectrum. 

\begin{theorem}
\label{ord1}
Let $\A$ be a compact selfadjoint operator with
simple point spectrum and trivial kernel and let 
$\LL = \A +a\otimes b$ be its rank one perturbation such that
$b_n \ne 0$ for any $n$. Assume that $\LL$ and $\LL^*$ are complete. 
Then the set of all invariant subspaces 
with fixed common spectral part is totally ordered by inclusion.
\end{theorem}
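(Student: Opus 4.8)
The plan is to pass to the functional model for $\LL$ constructed in the paper: realize $H = L^2(\nu)$ (with $\nu = \sum_n \nu_n \delta_{s_n}$, $s_n\in\RR$ since $\A$ is selfadjoint) as a de~Branges-type space $\Hg$ of entire functions, in which $\A$ becomes multiplication by $z^{-1}$ after the change of variable $t_n = s_n^{-1}$, and $\LL$ corresponds to a rank one perturbation whose eigenfunctions are the reproducing kernels (or Cauchy-type kernels) $k_{t_n}$ and whose adjoint eigenfunctions are a biorthogonal system. The point is that in this model an $\LL$-invariant subspace $\MM$ with common spectral part $\ospan\{x_n : n\in N_1\}$ is squeezed as in \eqref{up} between two subspaces built from complementary parts of a complete and minimal system, so it is governed by a single ``generating function'': there is an entire (or meromorphic) function $G_\MM$, vanishing exactly on $\{t_n : n\in N_1\}$, such that $\MM$ is recovered from $G_\MM$ by a division/multiplication procedure. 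Concretely, one expects $\MM = \MM_{G}$ where $G = G_\MM$ ranges over a family of functions all having the same zero set $\{t_n : n\in N_1\}$ and differing by an entire factor with no zeros on $T$; membership $f\in\MM_G$ is equivalent to a Cauchy-transform / growth condition on $f/G$.

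The key steps, in order, are: (1) record from the functional model that every invariant subspace with the given common spectral part is of the form $\MM_G$ for a suitable admissible $G$, and that $\MM_{G_1}\subset\MM_{G_2}$ if and only if $G_1/G_2$ is, in the appropriate sense, ``multiplier-like'' (belongs to the relevant class of entire functions of bounded type / finite exponential type with the right Cauchy-transform behaviour); (2) show that the admissible generating functions $G$ all share not only the zero set $\{t_n : n\in N_1\}$ but a common ``outer-type'' normalization, so that the ratio $G_1/G_2$ of two admissible functions is a \emph{nonvanishing} entire function of finite order, hence of the form $e^{q}$ with $q$ a polynomial (after extracting an exponential-type factor) — this is where selfadjointness is used, via the real line being the natural boundary and Krein-type / de~Branges comparison of phases; (3) given two admissible $G_1, G_2$, compare the phase functions along $\RR$: for selfadjoint $\A$ the associated entire functions are real (Hermite--Biehler-type), their arguments on $\RR$ are monotone, and two such functions with the same real zero set are automatically comparable — either $\arg G_1 \le \arg G_2$ everywhere or the reverse — which forces $G_1/G_2$ or $G_2/G_1$ to lie in the multiplier class, hence $\MM_{G_1}\subset\MM_{G_2}$ or $\MM_{G_2}\subset\MM_{G_1}$.

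I expect the main obstacle to be step (2)–(3): ruling out ``incomparable'' admissible generating functions. A priori two functions with the same zero set on $T$ could differ by a factor that is large on one part of $\RR$ and small on another, and then neither $\MM_{G_1}\subset\MM_{G_2}$ nor the reverse need hold. The reason this cannot happen in the selfadjoint case is that $T\subset\RR$ and the relevant spaces are de~Branges spaces: the generating functions are (up to harmless factors) of Hermite--Biehler class, so their boundary behaviour is rigid — $\log|G|$ is controlled by a single increasing phase and cannot oscillate between ``too big'' and ``too small'' relative to another such function with the same zeros. Making this rigidity quantitative — precisely, showing that the quotient of two admissible $G$'s is a monotone/real entire function with no zeros, hence a genuine multiplier in one direction — is the technical heart of the argument; it is exactly the place where the hypothesis of selfadjointness (rather than mere normality in $\mathfrak{S}_p$) is essential, and where the general normal case fails without extra spectral conditions, as the conjecture discussion anticipates. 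The hypotheses $b_n\ne 0$ for all $n$ and completeness of $\LL, \LL^*$ are used to guarantee that the model is nondegenerate (all $k_{t_n}$ present, biorthogonal system complete and minimal) so that the description ``$\MM = \MM_G$'' is available with no missing or spurious components.
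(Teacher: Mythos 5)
Your reduction to the functional model is in line with the paper: invariant subspaces with a fixed common spectral part become nearly invariant (hence division-invariant) subspaces $\MM$ of a de Branges--Cauchy space squeezed between $\ospan\{G/(z-\lambda):\lambda\in\Lambda_2\}$ and $(\ospan\{k_\lambda:\lambda\in\Lambda_1\})^\perp$. But your step (1) contains a genuine gap: you assume that every such $\MM$ can be written as $\MM_G$ for a single ``generating function'' $G$, with inclusion of subspaces corresponding to a divisibility/multiplier relation between the generating functions. Nothing in the model gives you this parametrization. What the model gives is only that every element of $\MM$ is divisible by $G_1$ (the entire function with zero set $\Lambda_1$), i.e.\ $\MM = G_1\mathcal{N}$ for some closed set $\mathcal{N}$ of entire functions; there is no a priori reason why $\mathcal{N}$ should be described by growth of $f/G$ against one more entire function. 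In fact, the existence of such a one-function parametrization of the lattice is essentially \emph{equivalent} to the total ordering you are trying to prove (this is the content of de Branges' Ordering Theorem for de Branges subspaces), so taking it as a starting point is circular. Your steps (2)--(3) then compare phases of two Hermite--Biehler functions with the same zero set, but the admissible generating functions of distinct nearly invariant subspaces in a chain do not in general share a zero set, so even granting (1) the comparison argument does not engage the actual difficulty.

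The paper proves the ordering directly, without any parametrization of the subspaces. Assuming $\MM_1\not\subset\MM_2$ and $\MM_2\not\subset\MM_1$, one picks witnesses $F_1\perp\MM_2$ (not orthogonal to $\MM_1$) and $F_2\perp\MM_1$ (not orthogonal to $\MM_2$), and for $G_1F\in\MM_1$, $G_1H\in\MM_2$ forms the two de Branges difference-quotient functions
$$
f(w)=\Big\langle G_1\tfrac{F-\frac{F(w)}{H(w)}H}{z-w},F_1\Big\rangle,\qquad
h(w)=\Big\langle G_1\tfrac{H-\frac{H(w)}{F(w)}F}{z-w},F_2\Big\rangle .
$$
Division invariance shows these are entire and independent of the choice of $H$ (resp.\ $F$); selfadjointness enters exactly where you predicted, but in a different way: $T\subset\RR$ makes all the relevant Cauchy transforms Smirnov-class in both half-planes, so Krein's theorem gives that $f$ and $h$ have zero exponential type, and a crude estimate yields $\min(|f(w)|,|h(w)|)\lesssim|\im w|^{-1}$. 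De Branges' Lemma 8 then forces $f\equiv 0$ or $h\equiv 0$, and comparing asymptotics of the resulting identity along $i\RR$ against a witness with nonzero pairing produces a contradiction. So the correct ``rigidity'' mechanism is Krein's theorem plus de Branges' alternative for a pair of zero-type functions, not a phase comparison of generating functions; to repair your argument you would need to either supply a proof of the parametrization in step (1) (which amounts to redoing the ordering theorem) or replace steps (1)--(3) by an argument of the above type.
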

\medskip 

In the case of normal operators we establish 
the ordered structure of invariant subspaces 
with fixed common spectral part when one of the following conditions holds:
\medskip
\begin{enumerate} 
\item [(i)] $\mathbf{Z:}$ $T$ is the zero set of some entire function 
of zero exponential type; 
\smallskip
\item [(ii)]
$\mathbf{\Pi:}$ $T$ lies in some strip and has finite convergence exponent;
\smallskip
\item [(iii)] $\mathbf{A_\gamma}:$  $T$ lies in some angle of size $\pi\gamma$, $0<\gamma<1$,
and the convergence exponent of $T$ is less than $\gamma^{-1}$.
\end{enumerate}

\begin{theorem}
\label{ord2}
Let $\A$ be a compact normal operator with
simple point spectrum $\{s_n\}$ 
such that its inverse spectrum $T=\{t_n\}$
satisfies one of the conditions 
$\mathbf{Z}$, $\mathbf{\Pi}$ or $\mathbf{A_\gamma}$. 
Let $\LL = \A +a\otimes b$ be its rank one perturbation such that
$b_n \ne 0$ for any $n$ and assume that $\LL$ and $\LL^*$ are complete. 
Then the set of all invariant subspaces 
with fixed common spectral part is totally ordered by inclusion.
\end{theorem}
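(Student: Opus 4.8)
The plan is to deduce Theorem \ref{ord2} from the functional model for rank one perturbations that the paper constructs, following the same scheme that proves Theorem \ref{ord1} but adapting it to the normal setting. Recall that in the model the operator $\LL$ becomes (unitarily equivalent to) multiplication by the independent variable on a de Branges-type Hilbert space $\Hg$ of entire functions built from the data $\nu$, $a$, $b$; the eigenvectors $x_n$ of $\LL$ correspond to reproducing kernels (or their analogues at $t_n$), and the eigenvectors $y_n$ of $\LL^*$ to a biorthogonal system. An $\LL$-invariant subspace $\MM$ with common spectral part $\ospan\{x_n : n\in N_1\}$ is, in the model, a space of the form $g\Hg_0$, i.e. it is described by division/multiplication by a single entire function: there exists an entire function $g$ whose zero set contains $T\setminus\{t_n : n\in N_1\}$ such that $\MM$ consists of the functions in $\Hg$ divisible by $g$, with $g$ unique up to the freedom coming from the zeros it is allowed to have outside $T$. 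Then $\MM_1\subset\MM_2$ is equivalent to $g_2 \mid g_1$ (divisibility of entire functions), and the totality of the order reduces to the statement that any two such ``generating'' functions $g_1,g_2$ are comparable under division.

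First I would set up this dictionary precisely, isolating the class $\mathcal{G}$ of entire functions $g$ that can generate an invariant subspace with the fixed spectral part $N_1$; the key structural point is that membership in $\mathcal{G}$ forces $g$ to vanish on $T\setminus\{t_n: n\in N_1\}$ and to satisfy a growth/admissibility condition tied to $\Hg$ (a Cartwright-type or finite-type condition coming from $\A\in\mathfrak S_p$ and the completeness of $\LL$ and $\LL^*$). The divisibility $g_2\mid g_1$ then says precisely that $g_1/g_2$ is again entire and lies in the appropriate Nevanlinna/Cartwright class. So two generators $g_1,g_2$ fail to be comparable exactly when neither quotient $g_1/g_2$ nor $g_2/g_1$ is entire of the admissible class, which can only happen if the two functions have zeros ``interleaved'' outside $T$ in an incompatible way, or if one of the quotients, though meromorphic with controlled zeros/poles, falls out of the admissible growth class.

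The role of the hypotheses $\mathbf{Z}$, $\mathbf{\Pi}$, $\mathbf{A_\gamma}$ is exactly to kill this last possibility: they guarantee that an entire function vanishing on (a subset of) $T$ and bounded by the relevant weight has its indicator diagram / growth rigidly controlled, so that a meromorphic quotient of two such functions with poles only at a subset of $T$ is automatically entire and admissible once we know it has no poles --- this is where one invokes a Phragmén--Lindelöf or indicator argument: in case $\mathbf{Z}$ the zero exponential type forces the quotient to have zero type; in case $\mathbf{\Pi}$ the strip condition plus finite convergence exponent gives a two-constants estimate across the strip; in case $\mathbf{A_\gamma}$ the angular opening $\pi\gamma$ with convergence exponent $<\gamma^{-1}$ is precisely the Phragmén--Lindelöf threshold ensuring that a function of that growth on the angle, vanishing on $T$, is determined up to the usual ambiguity. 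With this in hand, one shows that for any $g_1,g_2\in\mathcal G$ the greatest common ``inner-type'' divisor $\gcd(g_1,g_2)$ and least common multiple again lie in $\mathcal G$, and that the lattice generated is in fact a chain because the only freedom is which subset of the (discrete) zero set $T\setminus\{t_n:n\in N_1\}$, together with possibly finitely many extra zeros forced by the reproducing-kernel normalization, the generator carries --- and these subsets, by the completeness hypotheses on $\LL$ and $\LL^*$, are forced to be nested.

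The main obstacle, I expect, is precisely the growth/admissibility bookkeeping in the quotient step: showing that if $\MM_1,\MM_2$ are two invariant subspaces with the same spectral part then the ratio of their generators is entire (no ``hidden'' poles at points of $T$) and stays in the class $\Hg$ allows. In the selfadjoint case of Theorem \ref{ord1} this is handled by symmetry and the structure of de Branges spaces on the line; in the normal case one loses that symmetry, and the argument must instead run through the Cauchy-transform estimates for planar measures from \cite{bbb-fock} together with the Phragmén--Lindelöf principle in the angle or strip, which is exactly why the three geometric hypotheses enter and why a hypothesis-free proof (the full Conjecture) remains open. A secondary technical point, easy to underestimate, is handling the finitely many extra zeros outside $T$ that a generator may carry (coming from the non-uniqueness in the model and from $\ker\LL=\ker\LL^*=0$ type normalizations): one must check these contribute only a finite, nested ambiguity and do not break the chain property, which I would dispose of by a dimension-count argument on $\MM\ominus\EE(\MM,\LL)$ in the spirit of Theorems \ref{syn2}--\ref{syn3}.
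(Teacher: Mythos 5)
There is a genuine gap, and it sits at the very foundation of your plan: the ``dictionary'' identifying each invariant subspace $\MM$ with common spectral part $N_1$ with a set of the form $\{F\in\Hg:\ g\mid F\}$ for a generating entire function $g$, with inclusion of subspaces corresponding to divisibility of generators, is false. In the model every such $\MM$ satisfies
$\ospan\{G/(z-\lambda):\lambda\in\Lambda_2\}\subset\MM\subset(\ospan\{k_\lambda:\lambda\in\Lambda_1\})^\perp$, and (as the paper notes at the start of Section \ref{th27}) the set of common zeros of $\MM$ is then exactly $\Lambda_1$ --- the same for every subspace in the family. If each $\MM$ were the set of functions divisible by a generator, all these subspaces would be determined by (essentially) the same zero set and would collapse to one another up to the finite ambiguity you mention; but Theorem \ref{coun} shows that spectral synthesis can fail with arbitrary (even infinite) defect, i.e.\ there genuinely are many distinct invariant subspaces sharing the spectral part and the zero set. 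The whole content of the Ordering Theorem is to order subspaces that \emph{cannot} be told apart by zeros, so no divisibility/indicator/$\gcd$ argument on generators can reach them. (A smaller inaccuracy: in the model $\LL$ is not multiplication by $z$ but the operator $\mathcal{T}_G f=(f-f(0)G)/z$; its invariant subspaces are \emph{nearly invariant} / division-invariant subspaces of $\mathcal{H}(T,A,\mu)$, which is the property the paper actually exploits.)

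The paper's route is entirely different: it reduces to an ordering theorem for nearly invariant subspaces (Theorem \ref{ord3}) and runs de Branges' argument from \cite[Theorem 35]{br}. Given $\MM_1\not\subset\MM_2$ and $\MM_2\not\subset\MM_1$, one picks $F_1\perp\MM_2$ not orthogonal to $\MM_1$ and $F_2\perp\MM_1$ not orthogonal to $\MM_2$, and forms two auxiliary functions
$$
f(w)=\Bigl\langle G_1\tfrac{F-\frac{F(w)}{H(w)}H}{z-w},F_1\Bigr\rangle,\qquad
h(w)=\Bigl\langle G_1\tfrac{H-\frac{H(w)}{F(w)}F}{z-w},F_2\Bigr\rangle,
$$
for $G_1F\in\MM_1$, $G_1H\in\MM_2$. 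One shows $f,h$ are entire of zero exponential type (this is where the hypotheses $\mathbf{Z}$, $\mathbf{\Pi}$, $\mathbf{A_\gamma}$, Krein's theorem, Phragm\'en--Lindel\"of and the Cauchy-transform estimates of \cite{bbb-fock} enter --- so your instinct about which tools are needed and why the geometric conditions appear is sound), then that at least one of $f,h$ vanishes identically (de Branges' Lemma 8 or the zero-density Liouville theorem), and finally derives a contradiction from the asymptotics of the resulting Cauchy-transform identity. None of this passes through a generator function or a divisibility lattice, and your proposal as written cannot be repaired within its own framework.
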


In \cite{abb} the theory of Cauchy--de Branges spaces 
$\mathcal{H}(T, A, \mu)$ (see the definition in Subsection \ref{funk})
was developed which generalizes the theory of classical de Branges spaces. 
In particular, in the cases
$\mathbf{Z}$, $\mathbf{\Pi}$ and $\mathbf{A_\gamma}$ 
an ordering theorem for {\it nearly invariant subspaces} of the backward shift
(see the definition in Section \ref{th27})  in $\mathcal{H}(T, A, \mu)$ 
was obtained similar to the classical de Branges' Ordering Theorem \cite[Theorem 35]{br}. 
On the other hand, it is shown in \cite{abb} that in general 
there is no ordered structure for nearly invariant subspaces
in Cauchy--de Branges spaces.

As we will see, the functional model translates the ordering problem 
for invariant subspaces of rank one perturbations into
the ordering problem for nearly invariant subspaces 
in the spaces $\mathcal{H}(T, A, \mu)$.  
The proofs of Theorems \ref{ord1} and \ref{ord2} 
are variations on the beautiful idea 
used by L. de Branges in the proof of \cite[Theorem 35]{br}.        
\medskip


\subsection{Functional model.}         
\label{funk}
The model for rank one perturbations constructed in \cite{by} 
acted in some de Branges space. De Branges spaces' theory is a deep and important field 
which has numerous applications in operator theory and in 
spectral theory of differential 
operators. For the basics of de Branges theory we refer to 
L. de Branges' classical monograph \cite{br} and to \cite{rom}; 
some further results and applications can be found in \cite{mp, rem}. 
In the normal case their role is played by a more general class of 
{\it spaces of Cauchy transforms} that we will call {\it Cauchy--de Branges spaces}.

Let $T = \{t_n\}_{n=1}^\infty \subset \mathbb{C}$, where
$t_n$ are distinct, let 
$|t_n|\to \infty$ as $n\to\infty$, and let 
$\mu=\sum_n\mu_n\delta_{t_n}$ be a positive measure such that
$\sum_n \frac{\mu_n}{|t_n|^2 +1}<\infty$.
Also let $A$ be an entire function which has only simple zeros and whose zero set 
$\ZZ_A$ coincides with $T$. With any such $T$, $A$ and $\mu$
we associate the Cauchy--de Branges space $\mathcal{H}(T,A,\mu)$ of entire functions,
$$
\mathcal{H}(T,A,\mu):=\biggl{\{}f:f(z)= A(z)\sum_n\frac{a_n\mu^{1/2}_n}{z-t_n},
\quad a = \{a_n\}\in\ell^2\biggr{\}}
$$
equipped with the norm $\|f\|_{\mathcal{H}(T,A,\mu)}:=\|a\|_{\ell^2}$.
Note that the series in the definition 
of $\mathcal{H}(T,A,\mu)$ converge absolutely and uniformly on compact sets.

The spaces $\mathcal{H}(T,A,\mu)$ were introduced in full generality 
by Yu. Belov, T. Mengestie,
and K. Seip \cite{bms}. Essentially, they are spaces of Cauchy transforms. 
We need the function $A$ to get rid of poles and make the elements entire, but 
the spaces with the same $T$, $\mu$ and different $A$'s are isomorphic.
In what follows we will usually (but not always) assume that $T$ has 
a finite convergence exponent
and $A$ in this case will be chosen to be some canonical product of the corresponding order. 
We call the pair $(T, \mu)$ the {\it spectral data} for $\mathcal{H}(T,A,\mu)$. 

Each space $\mathcal{H}(T,A,\mu)$ is a reproducing kernel Hilbert space.
It is noted in \cite{bms} that if $\mathcal{H}$ is 
a reproducing kernel Hilbert space of entire functions such that
$\mathcal{H}$ has the {\it division property}  (that is, $\frac{f(z)}{z-w} 
\in \mathcal{H}$ whenever $f\in\mathcal{H}$ and $f(w) = 0$)
and there exists a Riesz basis of reproducing kernels in $\mathcal{H}$,
then $\mathcal{H} = \mathcal{H}(T,A,\mu)$ (as sets with equivalence of norms) 
for some choice of the parameters. Note that the functions 
$\overline{A'(t_n)} \mu_n \cdot \frac{A(z)}{z-t_n}$
form an orthogonal basis in $\mathcal{H}(T,A,\mu)$ and are 
the reproducing kernels at the points $t_n$.  
Reproducing kernels at other points can be written 
in a standard way using this orthogonal basis, but 
we do not have a good explicit formula for them. The reproducing kernel of the space 
$\mathcal{H}(T,A,\mu)$ at the point $\lambda$ will be denoted by $k_\lambda$.

In the case when $T\subset \RR$ and $A$ is real on $\RR$, 
the space $\mathcal{H}(T,A,\mu)$ is a de Branges space. This follows,
e.g., from the axiomatic description of de Branges spaces \cite[Theorem 23]{br}.
In a recent preprint \cite{abb} certain properties
of de Branges spaces (e.g., ordered structure of subspaces) are extended
to a class of Cauchy--de Branges spaces.

Following de Branges, we say that an entire function $G$ is {\it associated} to the space 
$\mathcal{H}(T,A,\mu)$  and write $G\in {\rm Assoc}\,(T, A, \mu)$ 
if, for any $F\in \mathcal{H}(T,A,\mu)$ and $w\in\CC$, 
we have 
$$
\frac{F(w)G(z) - G(w)F(z)}{z-w} \in \mathcal{H}(T,A,\mu).
$$
Equivalently, this means that $G \in \mathcal{P}_1 \mathcal{H}(T,A,\mu)$. We write
$F \in \mathcal{P}_n \mathcal{H}(T,A,\mu)$ if $F(z) = \sum_{j=0}^n z^j F_j(z)$, 
$F_j \in \mathcal{H}(T,A,\mu)$. Finally, if $G$ has zeros, then the inclusion 
$G\in {\rm Assoc}\,(T, A, \mu)$ is equivalent to
$\frac{G(z)}{z-\lambda} \in \mathcal{H}(T,A,\mu)$ for some (any) 
$\lambda\in \mathcal{Z}_G$. Note that, in particular, 
$A\in {\rm Assoc}\,(T, A, \mu) \setminus \mathcal{H}(T,A,\mu)$.

Now we are able to formulate the functional model of rank one perturbations 
of normal operators. Here and in what follows (except a part of Section \ref{funct}) 
we assume that $\A$ is a compact normal operator in a Hilbert space $H$ 
with simple point spectrum $\{s_n\}$, $s_n \ne 0$. We identify $H$ with $L^2(\nu)$, 
where $\nu = \sum_n \nu_n \delta_{s_n}$, 
and $\A$ with multiplication by $z$ in $L^2(\nu)$. 
The elements of $L^2(\nu)$ are identified with sequences., i.e., 
for $a\in L^2(\nu)$ we write $a=(a_n)$, where $a_n = a(s_n)$.

\begin{theorem}
\label{main1}
Let $\A$ be multiplication by $z$ in $L^2(\nu)$, 
$\nu = \sum_n \nu_n \delta_{s_n}$. Put $t_n = s_n^{-1}$. Let 
$\LL = \A + a\otimes b$ be a rank one perturbation of $\A$ such that $b=\{b_n\}
\in L^2(\nu)$ is a cyclic vector for $\A$, i.e., $b_n \ne 0$ for any $n$. Then
there exist 
\begin{itemize}
\item
a positive measure $\mu=\sum_n\mu_n\delta_{t_n}$ such that
$\sum_n \frac{\mu_n}{|t_n|^2 +1}<\infty$\textup; 
\item
a space $\mathcal{H}(T,A,\mu)$\textup;
\item 
an entire function $G\in  {\rm Assoc}\,(T, A, \mu)$ with $G(0)=1$
\end{itemize}
such that $\LL$ is unitarily equivalent to the model operator 
$\mathcal{T}_G: \mathcal{H}(T,A,\mu) \to \mathcal{H}(T,A,\mu)$, 
$$
(\mathcal{T}_Gf)(z) = \frac{f(z) - f(0)G(z)}{z}, \qquad f \in \mathcal{H}(T,A,\mu).
$$

Conversely, for any space $\mathcal{H}(T,A,\mu)$ and the function 
$G \in  {\rm Assoc}\,(T, A, \mu)$ with $G(0) = 1$ 
the corresponding operator $\mathcal{T}_G$ is a model
of a rank one perturbation for some compact normal operator $\A$
with spectrum $\{s_n\}$, $s_n = t_n^{-1}$.
\end{theorem}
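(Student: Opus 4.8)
The plan is to construct the unitary equivalence explicitly by transforming the rank one perturbation into a Cauchy-transform picture. Since $b$ is cyclic for $\A$, we have $b_n \ne 0$ for all $n$, and we should first pass to the resolvent: for $\lambda$ outside the spectrum, write $(\LL - \lambda)^{-1}$ in terms of $(\A - \lambda)^{-1}$ via the Sherman--Morrison formula. The key object is the characteristic-type function $\Phi(\lambda) = 1 + ((\A - \lambda)^{-1}a, b)$, and the eigenvalues of $\LL$ (counted with multiplicity) are the zeros of $\Phi$. Rather than work with $\lambda$, however, it is more natural to conjugate by the inversion $z \mapsto z^{-1}$ so that the compact operator $\A$ becomes (unboundedly) multiplication by $t = z^{-1}$, the spectrum $\{s_n\}$ becomes $\{t_n\}$ with $|t_n| \to \infty$, and $\LL$ becomes a backward-shift-type operator at the origin. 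First I would set up this change of variables carefully, defining the appropriate measure $\mu_n$ in terms of $\nu_n$, $a_n$, $b_n$, $t_n$ (essentially $\mu_n \asymp |a_n b_n|\,\nu_n$ or a similar combination dictated by matching norms).

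Next I would define the transform $V \colon H = L^2(\nu) \to \text{(space of entire functions)}$ by sending $x = (x_n) \in L^2(\nu)$ to the function $A(z) \sum_n \frac{c_n(x)}{z - t_n}$ where $A$ is the canonical product (of the order given by the finite convergence exponent of $T$, if we are in the Schatten case; in general any entire function with simple zeros exactly at $T$) and $c_n(x)$ is a weighted version of $x_n b_n \nu_n$. The claim is that $V$ is, after normalization of the weights $\mu_n$, an isometry onto $\mathcal{H}(T, A, \mu)$; this uses that $\big\{ \overline{A'(t_n)}\mu_n \frac{A(z)}{z - t_n} \big\}$ is an orthogonal basis of $\mathcal{H}(T,A,\mu)$, as recorded in the excerpt. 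Then I would compute $V \LL V^{-1}$ on this orthogonal basis (i.e., on the reproducing kernels at $t_n$) and check it has the stated form $(\mathcal{T}_G f)(z) = \frac{f(z) - f(0)G(z)}{z}$, which amounts to identifying $G$: the function $G$ should be $G(z) = A(z) \cdot \big(\text{Cauchy-type series built from the } a_n\big)$ normalized so $G(0) = 1$, and the condition $\sum_n a_n\bar b_n t_n \nu_n \ne -1$ from \eqref{mom0}-type considerations is exactly what makes $G(0)$ nonzero (so the normalization is possible) — though in the general statement we only need $G \in \mathrm{Assoc}(T,A,\mu)$ with $G(0) = 1$, which is equivalent to $\frac{G(z)}{z - \lambda} \in \mathcal{H}(T,A,\mu)$ for $\lambda \in \mathcal{Z}_G$, as in the excerpt. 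The point $0$ is special because $0 \notin T$ (since $s_n \ne 0$), so division by $z$ maps $\mathcal{H}(T,A,\mu)$ into itself up to the correction term $f(0)G(z)$.

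For the converse direction, given the abstract data $(T, A, \mu)$ and $G \in \mathrm{Assoc}(T,A,\mu)$ with $G(0) = 1$, I would run the construction backwards: set $s_n = t_n^{-1}$, let $\nu_n$ be whatever weights make the inverse map $V^{-1}$ well-defined (reading off the relation between $\mu_n$, $a_n$, $b_n$, $\nu_n$ from the forward direction and solving, e.g., taking $b_n = 1$ so that $\nu_n$ and $a_n$ are determined by $\mu_n$ and the Taylor/partial-fraction coefficients of $G$ at $t_n$), define $\A$ as multiplication by $z$ on $L^2(\nu)$ and $\LL = \A + a \otimes b$, and verify directly that $\mathcal{T}_G$ is unitarily equivalent to this $\LL$. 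One must check $\A$ is genuinely compact, i.e., $|s_n| \to 0$, which is automatic from $|t_n| \to \infty$; that $\nu$ is a finite measure, which should follow from $\sum_n \frac{\mu_n}{|t_n|^2 + 1} < \infty$ together with $G \in \mathrm{Assoc}$; and that $b$ is cyclic, which holds by construction since $b_n \ne 0$.

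The main obstacle I expect is making the forward transform $V$ genuinely isometric onto $\mathcal{H}(T,A,\mu)$ with the right choice of $\mu$ — in particular, handling the fact that the naive Cauchy transform of $x_n b_n \nu_n$ need not land in $\ell^2$ unless the weights are chosen just so, and verifying that the division property and the action of $\LL$ survive the inversion $z \mapsto z^{-1}$ cleanly (the unbounded multiplication operator and the behavior of $A$ at $\infty$ versus the behavior at $0$ must be reconciled). A secondary technical point is confirming that $G$ lands in $\mathrm{Assoc}(T, A, \mu) \setminus$ possibly $\mathcal{H}(T,A,\mu)$ rather than merely in some larger $\mathcal{P}_n\mathcal{H}(T,A,\mu)$; here the rank-\emph{one} nature of the perturbation is what pins $G$ down to $\mathcal{P}_1\mathcal{H}(T,A,\mu) = \mathrm{Assoc}(T,A,\mu)$, and I would isolate that as a separate lemma. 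Once $V$, $A$, $\mu$ and $G$ are correctly identified, the verification that $V\LL V^{-1} = \mathcal{T}_G$ should reduce to a short computation on the orthogonal basis of reproducing kernels.
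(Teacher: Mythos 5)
Your proposal follows essentially the same route as the paper: the resolvent identity produces the characteristic function $\beta(\lambda)=1+\big((\A-\lambda I)^{-1}a,b\big)$ together with a unitary Cauchy-transform map onto a space of Cauchy transforms of $\sigma=|b|^2\nu$, and the inversion $z\mapsto z^{-1}$ (implemented as $f\mapsto A(z)z^{-1}f(z^{-1})$) then carries this onto $\mathcal{H}(T,A,\mu)$ with $\mu_n=|t_n|^2|b_n|^2\nu_n$ (note the weights involve only $b$, not $a$) and $G(z)=A(z)\beta(z^{-1})$, exactly as in the paper's two-step argument. One small correction: $G(0)=1$ holds automatically from the normalizations $A(0)=1$ and $\beta(\infty)=1$, independently of \eqref{mom0}; that moment condition instead governs whether $G$ lies in $\mathcal{H}(T,A,\mu)$ itself (Lemma \ref{gin}), not whether the normalization at $0$ is possible.
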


As we will see in Lemma \ref{bro}, the assumption $b_n \ne 0$ 
does not lead to the loss of generality
when we study completeness of eigenvectors of $\LL$ and $\LL^*$.

The model itself is by no means original. In the case of selfadjoint 
operators it was constructed in \cite{by}, but many similar
models for rank one perturbations (in slightly different situations or under 
some additional restrictions) were known previously.
We will mention the model of V. Kapustin \cite{kap} for rank one perturbations 
of unitary operators and the model of G. Gubreev and A. Tarasenko 
for selfadjoint operators \cite{gub}. The latter model is especially 
close to ours with the same operator 
$\mathcal{T}_G$ as the model operator (in de Branges space setting).

The main novelty of the present work is not in the model but in its applications:
combined with recent developments in the theory of reproducing kernel 
Hilbert spaces of entire functions from \cite{bb, bbb, bbb1, bbb-fock, by} 
it leads to a more or less complete understanding 
of completeness and spectral synthesis problems for rank one perturbations
of normal \medskip
operators.


\subsection{Organization of the paper.}
The paper is organized as follows. In Section \ref{funct} 
we construct the functional model for rank one perturbations.         
Completeness of $\LL$ and $\LL^*$ is studied in Section \ref{comp1},
while in Section \ref{synte} positive results on the spectral synthesis 
(Theorems \ref{syn1}--\ref{syn3}) are proved. In Section \ref{th26}
we prove Theorem \ref{coun}. The Ordering Theorem for
invariant subspaces of rank one perturbations with common
spectral part is established in Section \ref{th27}. Finally, 
in Section \ref{volt} we discuss the description of compact normal operators
which have a Volterra rank one perturbation.
\bigskip


\section{Functional model}
\label{funct}

In this section we prove Theorem \ref{main1}. In fact, we construct 
a similar model
for general normal (not necessarily compact) operators.

\subsection{General normal operators}
Let $\A$ be the operator of multiplication by $z$ in $L^2(\nu)$ where $\nu$ 
is a finite measure with compact closed support $K$ and assume that
$K$ has zero planar Lebesgue measure. Here we do not assume that 
$\nu$ is an atomic measure. 
By the classical Hartogs--Rosenthal theorem, we have the following uniqueness
property:
$$
u\in L^2(\nu) \quad \text{and} \quad \int \frac{u(\zeta)d\nu(\zeta)}{\zeta-z} =0 
\ \ \text{for all} \ \ z\in \mathbb{C}\setminus K \ \Longrightarrow  \ u=0.
$$
Note that Wolff's example shows that 
there exist atomic measures with closed support 
$K= \{|z|\le 1\}$ such that the Cauchy transform of some nonzero function from $L^2(\nu)$
is identically zero in $\mathbb{C}\setminus K$. 

Consider the space of all Cauchy transforms
$$
\mathcal{C}(K, \nu) = \bigg\{ f(z) = \int\frac{u(\zeta)d\nu(\zeta)}{\zeta-z}, 
\ u\in L^2(\nu)  \bigg\},
$$
considered as the space of analytic functions in $\mathbb{C} \setminus K$ 
and equipped with the norm $\|f\|_{\mathcal{C}(K, \nu)} = \|u\|_{L^2(\nu)}$.
Then $\mathcal{C}(K, \nu)$ is a Hilbert space.

Note that for any $f\in \mathcal{C}(K, \nu)$ the mapping $f\mapsto (zf)_\infty$,
where $(zf)_\infty = \lim_{|z|\to\infty} zf(z)  =- \int ud\nu$ is a bounded linear 
functional on $\mathcal{C}(K, \nu)$.

\begin{theorem}
\label{main2}
Let $\A$ be multiplication by $z$ in $L^2(\nu)$, 
and $\LL = \A + a\otimes b$ be a rank one perturbation of $\A$ such that
$a, b\in L^2(\nu)$ and $b\ne 0$ $\nu$-a.e. Put $\sigma = |b|^2\nu$. 
Then there exists a function $\beta$ analytic in $\mathbb{C}\setminus K$ 
with the properties
\smallskip
\begin{enumerate} 
\item [(i)]
$\beta \notin \mathcal{C}(K, \sigma)$\textup;
\smallskip
\item [(ii)]
$\frac{\beta(z) - \beta(\lambda)}{z-\lambda} \in \mathcal{C}(K, \sigma)$ 
for any $\lambda\in \mathbb{C}\setminus K$\textup;
\smallskip
\item [(iii)]
$\beta(\infty)=1$,
\end{enumerate}
\smallskip
such that $\LL$ is unitarily equivalent to the model operator 
$\mathcal{M}_\beta: \mathcal{C}(K, \sigma) \to \mathcal{C}(K, \sigma)$, 
$$
(\mathcal{M}_\beta f)(z) = zf(z) - (zf)_\infty \beta(z), 
\qquad f \in \mathcal{C}(K, \sigma).
$$

Conversely, for any space $\mathcal{C}(K, \sigma)$, where $\sigma$ is a finite
Borel measure, and for any function $\beta$ having the properties 
{\rm (i)--(iii)}, the corresponding 
operator $\mathcal{M}_\beta$ is a model of a rank one perturbation of some
normal cyclic operator $\A$.
\end{theorem}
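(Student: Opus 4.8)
The plan is to realize the model space $\mathcal{C}(K,\sigma)$ as a unitary copy of $L^2(\nu)$ via an appropriate Cauchy-transform map, and then transport the operator $\LL$ through this unitary to identify it with $\mathcal{M}_\beta$ for a suitable $\beta$. First I would normalize: since $b\ne 0$ $\nu$-a.e., the map $u\mapsto bu$ is a unitary isomorphism from $L^2(\sigma)$, $\sigma = |b|^2\nu$, onto $L^2(\nu)$. So without loss of generality I may conjugate everything into $L^2(\sigma)$, where the perturbation takes the normalized form $x\mapsto zx + (x,\mathbf 1)_\sigma \tilde a$ for a suitable $\tilde a\in L^2(\sigma)$ (here $\mathbf 1$ is the constant function, which plays the role of $b$). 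Next, define the Cauchy transform $V:L^2(\sigma)\to \mathcal{C}(K,\sigma)$ by $(Vu)(z) = \int \frac{u(\zeta)\,d\sigma(\zeta)}{\zeta-z}$; by the Hartogs--Rosenthal uniqueness property recalled in the excerpt (applicable since $K$ has zero area), $V$ is injective, and by definition of the norm on $\mathcal{C}(K,\sigma)$ it is a unitary onto $\mathcal{C}(K,\sigma)$.

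The heart of the matter is to compute $V \A_\sigma V^{-1}$ where $\A_\sigma$ is multiplication by $z$ on $L^2(\sigma)$, and then add the rank one term. The key elementary identity is the resolvent-type computation
$$
\Big(V(\zeta u(\zeta))\Big)(z) = \int \frac{\zeta u(\zeta)}{\zeta - z}\,d\sigma(\zeta) = z\int\frac{u(\zeta)}{\zeta-z}\,d\sigma(\zeta) + \int u\,d\sigma = z(Vu)(z) - (zVu)_\infty,
$$
using $(zVu)_\infty = -\int u\,d\sigma$. Thus $V\A_\sigma V^{-1}f = zf(z) - (zf)_\infty \cdot 1$, i.e. multiplication by $z$ corresponds to $\mathcal{M}_1$, the model operator with $\beta\equiv 1$. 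Now for the rank one part: the vector $\mathbf 1\in L^2(\sigma)$ gets paired via $(x,\mathbf 1)_\sigma$, and $x = V^{-1}f$ has $(V^{-1}f,\mathbf 1)_\sigma = \int V^{-1}f\,d\sigma = -(zf)_\infty$. The added vector $\tilde a$ maps to $V\tilde a =: g \in \mathcal{C}(K,\sigma)$. Putting this together, $V\LL_\sigma V^{-1}f = zf(z) - (zf)_\infty\cdot 1 - (zf)_\infty g(z) = zf(z) - (zf)_\infty\beta(z)$ with $\beta := 1 + g$. One then checks the properties (i)--(iii): $\beta(\infty) = 1 + g(\infty) = 1 + 0 = 1$ gives (iii); since $g\in\mathcal{C}(K,\sigma)$ but the constant $1\notin\mathcal{C}(K,\sigma)$ (its Cauchy preimage would need $\int\frac{u\,d\sigma}{\zeta-z}\equiv 1$, impossible as the left side vanishes at $\infty$), we get $\beta\notin\mathcal{C}(K,\sigma)$, which is (i); and (ii) follows because $\frac{\beta(z)-\beta(\lambda)}{z-\lambda} = \frac{g(z)-g(\lambda)}{z-\lambda}$, and $\mathcal{C}(K,\sigma)$ is closed under the divided-difference operation (indeed $\frac{g(z)-g(\lambda)}{z-\lambda} = V\big(\frac{u(\zeta)}{\zeta-\lambda}\big)$ if $g = Vu$, and $\frac{u}{\zeta-\lambda}\in L^2(\sigma)$ since $\lambda\notin K$).

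For the converse, given $\mathcal{C}(K,\sigma)$ and $\beta$ with (i)--(iii), I would run the argument backwards: write $\beta = 1 + g$ with $g = \beta - 1\in\mathcal{C}(K,\sigma)$ by (i) and the fact that the difference of $\beta$ and the constant-$1$ function lies in $\mathcal{C}(K,\sigma)$ — this needs a small argument using (i), (ii), (iii), e.g. that $\beta - \beta(\infty) = \beta - 1$ is the Cauchy transform of some $L^2(\sigma)$ function, which can be seen from property (ii) applied and a limiting argument as $\lambda\to\infty$, or directly from the characterization of $\mathrm{Assoc}$-type functions; set $\tilde a = V^{-1}g$, let $\nu$ be any finite Borel measure with $d\sigma = |b|^2\,d\nu$ for some $b\ne 0$ $\nu$-a.e. (e.g. $\nu = \sigma$, $b\equiv 1$), transport back through $V^{-1}$ and the multiplication-by-$b$ isomorphism, and recognize the result as $\A + a\otimes b$ for the appropriate $a$. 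The main obstacle I anticipate is the rigorous justification that $\beta - 1 \in \mathcal{C}(K,\sigma)$ from axioms (i)--(iii) alone (the "associated function minus its value at infinity lies in the space" step), together with checking that the rank one vector $\tilde a$ produced this way is genuinely in $L^2(\sigma)$ and that $b$ is a cyclic vector for the reconstructed normal operator — cyclicity of $b$ for $\A$ is what makes the original perturbation have the claimed normalized form in the first place, and in the converse direction it must be verified that multiplication by $z$ on $L^2(\sigma)$ is cyclic with cyclic vector $\mathbf 1$, which again reduces to the Hartogs--Rosenthal uniqueness property.
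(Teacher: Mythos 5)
Your proposal is correct and arrives at the same unitary $V$ and the same function $\beta(z)=1+\int\frac{a(\zeta)\bar b(\zeta)}{\zeta-z}\,d\nu(\zeta)$ as the paper, but by a genuinely different route in the direct part: you first conjugate by $u\mapsto bu$ to normalize the perturbation over $L^2(\sigma)$ and then compute $V\LL V^{-1}$ directly on the operator level via the identity $V(\zeta u)=zVu-(zVu)_\infty$, whereas the paper derives $\beta$ from the resolvent identity and verifies that $V$ intertwines $(\LL-wI)^{-1}$ with $(\mathcal{M}_\beta-wI)^{-1}$ for infinitely many $w$, concluding $V\LL=\mathcal{M}_\beta V$. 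Your computation is more elementary and avoids bookkeeping about which $w$ lie in the two resolvent sets; the paper's version has the advantage of exhibiting $\beta$ from the outset as the perturbation determinant $1+\bigl((\A-zI)^{-1}a,b\bigr)$, which is what makes the model useful later. For the converse, the obstacle you flag --- that $\beta-\beta(\infty)\in\mathcal{C}(K,\sigma)$ --- is settled not by a limiting argument in $\lambda$ but by a one-line partial-fractions identity, which is exactly what the paper does: apply (ii) at a single fixed $\lambda\notin K$ to write $\beta(z)=\beta(\lambda)+(z-\lambda)\int\frac{u(\zeta)}{\zeta-z}\,d\sigma(\zeta)$ with $u\in L^2(\sigma)$, use $\frac{z-\lambda}{\zeta-z}=-1+\frac{\zeta-\lambda}{\zeta-z}$ to get $\beta(z)=\bigl(\beta(\lambda)-\int u\,d\sigma\bigr)+\int\frac{(\zeta-\lambda)u(\zeta)}{\zeta-z}\,d\sigma(\zeta)$, note that $(\zeta-\lambda)u\in L^2(\sigma)$ since $K$ is compact, so the integral term lies in $\mathcal{C}(K,\sigma)$ and vanishes at infinity, and then (iii) forces the constant to equal $1$. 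The paper then sets $\nu=|b|^{-2}\sigma$ and $a=(\zeta-\lambda)ub$ for an arbitrary $b\ne 0$ $\sigma$-a.e., exactly as you propose; your remaining concerns (that the new vector lies in $L^2$, and cyclicity) are then automatic.
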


\begin{proof}
By the resolvent identity, for $u\in L^2(\nu)$,
$$
(\A - zI)^{-1}u - (\LL - zI)^{-1} u  = \big((\LL - zI)^{-1} u, b\big) (\A- zI)^{-1} a
$$
and so
$$
\big( (\A - zI)^{-1}u, b \big) - \big( (\LL - zI)^{-1} u, b \big)  = 
\big((\LL - zI)^{-1} u, b\big) \big(\A - zI)^{-1} a, b\big).
$$
Thus, 
$$
\big((\LL - zI)^{-1} u, b\big) = (\beta(z))^{-1}\big( (\A - zI)^{-1} u, b\big),
$$
where 
\begin{equation}
\label{beta}
\beta(z) = 1+ \big( (\A - zI)^{-1} a, b\big) = 
1+ \int\frac{a(\zeta) \bar b(\zeta)}{\zeta-z}d\nu(\zeta).
\end{equation}
The mapping
$$
V:\, u\mapsto  \big( (\A - zI)^{-1} u, b\big)  = 
\int\frac{u(\zeta)b^{-1}(\zeta)}{\zeta-z}d\sigma(\zeta) 
$$
is a unitary map from $L^2(\nu)$ to $\mathcal{C}(K, \sigma)$.
Let $z,w\in \rho(\LL)\cap\rho(\A)$ (where $\rho(\mathcal{U})$ denotes the resolvent 
set for an operator $\mathcal{U}$) and let $\beta(w) \ne 0$.  Then
$$
\begin{aligned}
V(\LL - wI)^{-1}u(z)  & = \beta(z)  
\big( (\LL - zI)^{-1} (\LL - wI)^{-1} u, b\big)  \\
& = \beta(z)
\frac{\big((\LL - zI)^{-1} u, b\big) - 
\big((\LL - wI)^{-1} u, b\big)}{z-w} \\
&  = 
\frac{(Vu)(z) - \dfrac{\beta(z)}{\beta(w)} (Vu)(w)}{z-w} 
\end{aligned}
$$

Now let $\mathcal{M}_\beta$ be the model operator
on $\mathcal{C}(K, \sigma)$. Let us compute 
$(\mathcal{M}_\beta-wI)^{-1}$ assuming $w\in\rho(\mathcal{M}_\beta)$, $w\notin K$,  
$\beta(w) \ne 0$.
Assume that $(\mathcal{M}_\beta - wI) g = (z-w)g - c_g \beta = h$,
where $g, h \in \mathcal{C}(K, \sigma)$,
$c_g = (zg)_\infty$. Then $g = \frac{h+c_g \beta}{z-w}$. Since $g, h, \beta$
are analytic outside $K$ and $\beta(w) \ne 0$ we conclude that $c_g = -g(w)/\beta(w)$
and so
$$
(\mathcal{M}_\beta-wI)^{-1} g(z) = 
\frac{g(z) - \dfrac{\beta(z)}{\beta(w)} g(w)}{z-w}. 
$$
Thus, 
$$
\big( V(\LL - wI)^{-1}u\big)(z) = \big((\mathcal{M}_\beta-wI)^{-1} Vu\big)(z)
$$
for any $w\notin K$
$w\in\rho(\mathcal{M}_\beta)\cap \rho(\LL)$, $\beta(w) \ne 0$. 
Since there are infinitely many such $w$, we conclude that 
$V\LL = \mathcal{M}_\beta V$.

Clearly, $\beta(\infty) = 1$ and so $\beta\notin \mathcal{C}(K, \sigma)$.
Also, 
$$
\frac{\beta(z)-\beta(\lambda)}{z-\lambda} = 
\int\frac{a(\zeta) b^{-1}(\zeta)}{(\zeta-\lambda)(\zeta-z)}d\sigma(\zeta) 
\in \mathcal{C}(K, \sigma)
$$
for any $\lambda\in\mathbb{C} \setminus K$.
\medskip

Let us prove the converse. Assume that $\beta$ satisfy conditions (i)--(iii).
Then, for some fixed $\lambda\in\mathbb{C} \setminus K$, there exists
$u\in L^2(\sigma)$ such that
$$
\beta(z) = \beta(\lambda) + (z-\lambda) \int \frac{u(\zeta)}{\zeta-z}d\sigma(\zeta) 
=
\beta(\lambda) - \int u(\zeta)d\sigma(\zeta)  + 
\int \frac{(\zeta-\lambda) u(\zeta)}{\zeta-z}d\sigma(\zeta). 
$$
From (iii) it follows that $\beta(\lambda) - \int u(\zeta)d\sigma(\zeta) = 1$.
It remains to take any $b\ne 0$ $\sigma$-a.e. and to
put $\nu=|b|^{-2}\sigma$ and $a(\zeta) = (\zeta-\lambda)u(\zeta)b(\zeta)$. 
Then $\beta$ is of the 
form \eqref{beta} and so it appears in the model for a rank one perturbation
of multiplication by $z$ in $L^2(\nu)$.
\end{proof} 
\medskip


\subsection{Preliminaries on Cauchy transforms and spaces $\mathcal{H}(T,A,\mu)$.}
\label{prel}
To apply the functional model to the study of completeness 
one needs to have good growth estimates for the 
Cauchy transforms of (discrete) planar measures. 
Powerfull tools for this were developed in \cite{bbb-fock}.
The following results from \cite{bbb-fock}
will be extensively used in what follows.

We say that $\Omega\subset \CC$ is a {\it set of zero area density} if 
$$
\lim_{R\to\infty} \frac{m_2(\Omega \cap D(0, R))}{R^2} = 0,
$$
where $m_2$ denotes the area Lebesgue measure in $\CC$. Clearly, a union
of two sets of zero density has zero density, a fact that we will constantly use.

The first statement shows that the Cauchy transform 
of a finite measure $\nu$ behaves asymptotically as
$\nu(\mathbb{C}) z^{-1}$ when $|z|\to \infty$. This is trivial for measures 
with compact support. The same is true in general up to a set of zero density.

\begin{lemma}\textup(\cite[Proof of Lemma 4.3]{bbb-fock}\textup)
\label{verd}
Let $\nu$ be a finite complex Borel measure
in $\CC$. Then, for any $\varepsilon>0$, 
there exists a set $\Omega$ of zero area density such that
$$
\bigg|\int_\mathbb{C}\frac{d\nu(\xi)}{z-\xi} - \frac{\nu(\mathbb{C})}{z} \bigg|
< \frac{\varepsilon}{|z|}, \qquad  z\in\CC\setminus\Omega.
$$
\end{lemma}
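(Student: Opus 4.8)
The plan is to reduce the claim to the estimation of a single Cauchy-type integral and then bound an exceptional set scale by scale. Using the identity $\frac{1}{z-\xi}-\frac{1}{z}=\frac{\xi}{z(z-\xi)}$, one rewrites $\int_{\mathbb{C}}\frac{d\nu(\xi)}{z-\xi}-\frac{\nu(\mathbb{C})}{z}=\frac{1}{z}\int_{\mathbb{C}}\frac{\xi\,d\nu(\xi)}{z-\xi}$, so it suffices to produce a set $\Omega$ of zero area density outside of which $\bigl|\int_{\mathbb{C}}\frac{\xi\,d\nu(\xi)}{z-\xi}\bigr|<\varepsilon$. First I would split $\nu=\nu_1+\nu_2$, where $\nu_1=\nu|_{D(0,R_0)}$ for a large $R_0$ and $\nu_2=\nu-\nu_1$ has total variation $|\nu_2|(\mathbb{C})$ as small as we wish (possible since $\nu$ is finite). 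The compactly supported piece is immediate: for $|z|\ge 2R_0$ one has $|z-\xi|\ge|z|/2$ on $\supp\nu_1$, hence $\bigl|\int\frac{\xi\,d\nu_1(\xi)}{z-\xi}\bigr|\le 2R_0\,|\nu_1|(\mathbb{C})\,|z|^{-1}\to 0$; this contribution is thus $<\varepsilon/3$ once $|z|\ge R_1$, and we place the bounded disc $D(0,R_1)$ (which trivially has zero area density) into $\Omega$. For the tail, the splitting $\frac{\xi}{z-\xi}=-1+\frac{z}{z-\xi}$ gives $\int\frac{\xi\,d\nu_2(\xi)}{z-\xi}=-\nu_2(\mathbb{C})+z\,(C\nu_2)(z)$ with $(C\nu_2)(z):=\int\frac{d\nu_2(\xi)}{z-\xi}$; the constant term is $<\varepsilon/3$ by smallness of the mass, so the whole question reduces to controlling $|z\,(C\nu_2)(z)|$ off a set of zero area density.

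The core of the argument is a Chebyshev--Tonelli estimate for the exceptional area inside each dyadic annulus $A_R=\{R\le|z|<2R\}$, relying on the elementary uniform bound $\int_{D(0,2R)}\frac{dm_2(z)}{|z-\xi|}\le 4\pi R$, valid for every $\xi\in\mathbb{C}$ because over any disc of radius $2R$ the integral of $|z-\xi|^{-1}$ is maximal when $\xi$ is its centre, and then equals $2\pi\cdot 2R$. A crude application of this only gives $m_2\bigl(\{z\in A_R:|z\,(C\nu_2)(z)|>\varepsilon/3\}\bigr)=O(R^2)\,|\nu_2|(\mathbb{C})$, that is, a bound of order $R^2$; this is the main obstacle, since shrinking $|\nu_2|(\mathbb{C})$ only yields a set of \emph{small} density, not of \emph{zero} density. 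To gain an extra factor $o(1)$ I would split $\nu_2$ once more, at the scale $R$, as $\nu_2=\mu_R'+\mu_R''$ with $\mu_R'=\nu_2|_{\{|\xi|\le R/3\}}$ and $\mu_R''=\nu_2|_{\{|\xi|>R/3\}}$. For $z\in A_R$ we have $|z-\xi|\ge 2R/3$ on $\supp\mu_R'$, so $|z\,(C\mu_R')(z)|\le 3\,|\nu_2|(\mathbb{C})<\varepsilon/6$ with no exceptional set whatsoever; on the other hand $|\mu_R''|(\mathbb{C})=|\nu|\bigl(\{|\xi|>R/3\}\bigr)=:\tau_R\to 0$ as $R\to\infty$, because $\nu$ is finite, so Chebyshev combined with the uniform kernel bound gives $m_2\bigl(\{z\in A_R:|z\,(C\mu_R'')(z)|>\varepsilon/6\}\bigr)\lesssim\varepsilon^{-1}R^2\tau_R=o(R^2)$.

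Finally I would assemble $\Omega$ from $D(0,R_1)$ together with the union, over the dyadic scales $R=2^k$, of the exceptional sets just described. For $2^m\le R<2^{m+1}$ this yields $m_2(\Omega\cap D(0,R))\lesssim 1+\varepsilon^{-1}\sum_{k\le m}4^k\tau_{2^k}$, and since $\tau_{2^k}\to 0$ the tail of this geometric-type sum is $o(4^m)=o(R^2)$, so $\Omega$ does have zero area density. For $z\notin\Omega$ the three contributions --- from $\nu_1$, from $-\nu_2(\mathbb{C})$, and from $z\,(C\nu_2)(z)=z\,(C\mu_R')(z)+z\,(C\mu_R'')(z)$ --- add up to less than $\varepsilon$, which is the asserted estimate. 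The only genuinely delicate point is upgrading the exceptional area from $O(R^2)$ to $o(R^2)$; the scale-dependent splitting achieves precisely this, exploiting that the mass of $\nu$ escapes to infinity.
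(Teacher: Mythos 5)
The paper does not actually prove Lemma \ref{verd} --- it is imported verbatim from the proof of Lemma 4.3 in \cite{bbb-fock} --- so there is no in-text argument to compare against; your proof is correct and follows essentially the standard route used there: reduce via $\frac{1}{z-\xi}-\frac1z=\frac{\xi}{z(z-\xi)}$ to bounding $\int\frac{\xi\,d\nu(\xi)}{z-\xi}$, peel off a compactly supported piece, and control the remaining Cauchy transform on dyadic annuli by Tonelli and Chebyshev, with the scale-dependent splitting $\nu_2=\mu_R'+\mu_R''$ supplying exactly the needed upgrade of the exceptional area from $O(R^2)\,|\nu_2|(\mathbb{C})$ to $o(R^2)$. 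Two cosmetic points only: your three error budgets sum to exactly $\varepsilon$ rather than strictly less (use $\varepsilon/4$ margins), and one should remark that the Cauchy integral converges absolutely for $m_2$-a.e.\ $z$ so that the rearrangements are licit (the null set can be absorbed into $\Omega$); neither affects the validity of the argument.
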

\medskip

The following variant of this statement will be often useful: 
if $\sum_n |t_n^{-1} d_n| <\infty$, then
\begin{equation}
\label{meas}
\sum_n \frac{d_n}{z-t_n}  = o(1), \qquad |z|\to\infty, \  
z\in\CC\setminus\Omega,
\end{equation}
for some set $\Omega$ of zero area density. This follows from
Lemma \ref{verd} and the formula
$$
\sum_n \frac{d_n}{z-t_n} = -\sum_n \frac{d_n}{t_n} + \sum_n \frac{d_n}{t_n(z-t_n)}.
$$

Let us mention one simple situation where we can conclude that
the Cauchy transform has the natural asymptotics along some rays. 

\begin{lemma}
\label{ugol}
Let $\nu$ be a finite complex Borel measure such that 
for some $\theta_0 \in [0, 2\pi)$ and $\delta>0$ we have 
${\rm supp}\,\nu\cap \{re^{i\theta}:\ r\ge 0, \ |\theta-\theta_0|
\le \delta\} = \emptyset$.
Then
$$
\int_\mathbb{C}\frac{d\nu(\xi)}{z-\xi} =  \frac{\nu(\mathbb{C})}{z}  +
o\Big(\frac{1}{z} \Big), \qquad z=re^{i\theta_0}, \ r\to\infty. 
$$
\end{lemma}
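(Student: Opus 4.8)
\textbf{Proof plan for Lemma \ref{ugol}.}
The plan is to reduce to the scale-invariant estimate of Lemma \ref{verd} by using the geometric separation of $\supp\nu$ from the ray $\arg z = \theta_0$. First I would write, as in the displayed identity following Lemma \ref{verd},
$$
\int_\mathbb{C}\frac{d\nu(\xi)}{z-\xi} - \frac{\nu(\mathbb{C})}{z} = \frac{1}{z}\int_\mathbb{C}\frac{\xi\, d\nu(\xi)}{z-\xi},
$$
so that it suffices to show that $\int_\mathbb{C} \frac{\xi\, d\nu(\xi)}{z-\xi} \to 0$ as $r\to\infty$ along $z = re^{i\theta_0}$. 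Since $\nu$ is finite, $\xi\, d\nu(\xi)$ is \emph{not} finite in general, so I cannot apply Lemma \ref{verd} directly; instead I split $\supp\nu$ into the bounded part $\{|\xi|\le N\}$ and the tail $\{|\xi|>N\}$, fix $\eps>0$, choose $N$ so that $\int_{|\xi|>N}|\xi|\,\big|\frac{d\nu(\xi)}{z-\xi}\big|$ is controlled, and treat the two pieces separately.

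For the bounded part, $\int_{|\xi|\le N}\frac{\xi\, d\nu(\xi)}{z-\xi}$ is the Cauchy-type transform of a compactly supported finite measure, which is $O(1/|z|)\to 0$ trivially; no density set is needed. For the tail, the key point is the geometric bound: if $\xi = \rho e^{i\psi}$ with $|\psi - \theta_0|\ge \delta$ and $z = re^{i\theta_0}$, then
$$
|z - \xi| \ge c(\delta)\,(r + \rho)
$$
for some constant $c(\delta)>0$ depending only on $\delta$ (elementary planar geometry: the angular separation forces $|z-\xi|$ to be comparable to the larger of $r,\rho$, uniformly). Hence on the tail
$$
\Bigabs{\int_{|\xi|>N}\frac{\xi\, d\nu(\xi)}{z-\xi}} \le \frac{1}{c(\delta)}\int_{|\xi|>N}\frac{|\xi|}{r+|\xi|}\, d|\nu|(\xi) \le \frac{1}{c(\delta)}\, |\nu|\big(\{|\xi|>N\}\big),
$$
which is small uniformly in $r$ once $N$ is large, since $|\nu|$ is finite. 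Combining the two parts: let $r\to\infty$ first (killing the bounded part), then $N\to\infty$ (killing the tail), to conclude $\int_\mathbb{C}\frac{\xi\,d\nu(\xi)}{z-\xi}\to 0$ along the ray.

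The only mildly delicate point is the uniform lower bound $|z-\xi|\ge c(\delta)(r+|\xi|)$; everything else is bookkeeping with a finite measure. I would verify it by noting $|z-\xi|^2 = r^2 + \rho^2 - 2r\rho\cos(\theta_0-\psi) \ge r^2+\rho^2 - 2r\rho\cos\delta = (r-\rho)^2 + 2r\rho(1-\cos\delta)$, and then checking in the two regimes $\rho \le r/2$ (where $(r-\rho)^2 \gtrsim r^2 \gtrsim (r+\rho)^2$) and $\rho > r/2$ (where $2r\rho(1-\cos\delta) \gtrsim \rho^2(1-\cos\delta) \gtrsim (r+\rho)^2 (1-\cos\delta)$) that $|z-\xi|^2 \gtrsim_\delta (r+\rho)^2$, as claimed.
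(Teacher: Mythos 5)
Your argument is correct and is essentially the paper's proof: the paper writes the same identity $\frac{1}{z-\xi}-\frac1z=\frac{\xi}{z(z-\xi)}$ and then invokes the Dominated Convergence theorem, using exactly the geometric separation ${\rm dist}(z,{\rm supp}\,\nu)\asymp |z|+|\xi|$ along the ray; your truncation into $\{|\xi|\le N\}$ and $\{|\xi|>N\}$ is just that DCT step unwound by hand. One small repair in your verification of $|z-\xi|^2\gtrsim_\delta (r+\rho)^2$: in the regime $\rho>r/2$ the bound $2r\rho\gtrsim\rho^2$ fails when $\rho\gg r$ (there you must fall back on the $(r-\rho)^2$ term); the cleanest fix is to note $(r-\rho)^2+2r\rho(1-\cos\delta)\ge (1-\cos\delta)\bigl((r-\rho)^2+2r\rho\bigr)=(1-\cos\delta)(r^2+\rho^2)\ge\frac{1-\cos\delta}{2}(r+\rho)^2$, which handles all regimes at once.
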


\begin{proof}
Since ${\rm dist}(z, {\rm supp}\,\nu) \asymp |z|$, $z=re^{i\theta_0}$, we have
$$
\bigg|\int_\mathbb{C}\frac{d\nu(\xi)}{z-\xi} - 
\frac{\nu(\mathbb{C})}{z} \bigg| \le 
\frac{1}{|z|}\int_\mathbb{C}\frac{|\xi| d|\nu|(\xi)}{|z-\xi|} = 
o\Big(\frac{1}{z} \Big), \qquad z=re^{i\theta_0}, \ r\to\infty,
$$
by the Dominated Convergence theorem.
\end{proof}

We also will need the following extension of the Liouville theorem.
This result which is due to A. Borichev appeared in 
\cite[Lemma 4.2]{bbb-fock} (in a slightly more general form). 
We include a short proof to make the exposition self-contained.

\begin{theorem}
\label{dens}
If an entire function $f$ of finite order is bounded on 
$\CC\setminus \Omega$ for some set $\Omega$ of zero area density, 
then $f$ is a constant. 
\end{theorem}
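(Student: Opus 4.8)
The plan is to bound $f$ on all of $\CC$ (not just on $\CC \setminus \Omega$) by exploiting the fact that $\Omega$ has zero area density, so that every large disc contains a large sub-disc on which $f$ is controlled, and then apply the ordinary maximum principle to propagate the bound. Concretely, suppose $|f| \le C$ on $\CC \setminus \Omega$. Fix a large radius $R$ and a point $z_0$ with $|z_0| = R$. I want to find a disc $D(w, \rho)$ with $z_0$ on its boundary (or at least with $z_0$ not too deep inside it) such that the full boundary circle $\partial D(w, \rho)$ misses $\Omega$; then $|f(z_0)| \le \sup_{\partial D(w,\rho)} |f| \le C$ by the maximum modulus principle applied on $D(w,\rho)$. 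Actually it is cleaner to argue as follows: since $m_2(\Omega \cap D(0, 2R))/R^2 \to 0$, for $R$ large the set $\Omega \cap D(0, 2R)$ occupies an arbitrarily small fraction of the annulus, so by a Fubini/averaging argument over circles $\partial D(0, r)$, $r \in [R, 2R]$, there exists $r \asymp R$ with $\partial D(0, r) \cap \Omega = \emptyset$, and on this circle $|f| \le C$; then $|f| \le C$ on all of $D(0, r) \supset D(0, R)$ by the maximum principle. Letting $R \to \infty$ gives $|f| \le C$ on $\CC$, so $f$ is bounded and entire, hence constant by Liouville.

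The key steps, in order: (1) from zero area density of $\Omega$, deduce that for a sequence $R_j \to \infty$ the measure $m_2(\Omega \cap (D(0, 2R_j) \setminus D(0, R_j)))$ is $o(R_j^2)$; (2) by integrating in polar coordinates, find a radius $r_j \in [R_j, 2R_j]$ with $\int_{R_j}^{2R_j} \big( \text{length of } \Omega \cap \partial D(0,r) \big)\, dr = o(R_j^2)$, hence some $r_j$ with the circle $\partial D(0, r_j)$ meeting $\Omega$ in arc-length much smaller than $r_j$ — in fact, one can arrange $\partial D(0, r_j) \cap \Omega = \emptyset$ if $\Omega$ is, say, closed, or otherwise accept a circle meeting $\Omega$ in a set of small measure and patch with a further application of the maximum principle on small discs covering the bad arcs; (3) conclude $\sup_{\partial D(0,r_j)} |f| \le C$; (4) apply the maximum modulus principle to get $|f| \le C$ on $D(0, r_j)$; (5) let $j \to \infty$ and invoke Liouville. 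Note that the hypothesis ``finite order'' is not even needed for this argument once $f$ is shown to be bounded — it would only matter if one instead tried to allow polynomial growth of $f$ off $\Omega$, which is not the case here.

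The main obstacle is step (2)–(3): the set $\Omega$ is an arbitrary set of zero area density, not assumed closed or measurable in any convenient way, so one cannot literally guarantee a circle entirely avoiding $\Omega$. The fix is to work not with avoidance but with smallness: choose $r_j$ so that the arc-length of $\partial D(0,r_j) \cap \Omega$ is $\le \eps_j r_j$ with $\eps_j \to 0$, cover this exceptional arc by finitely many discs of radius $\eps_j r_j$ centered on it, and on each such small disc bound $|f|$ by its values on the portion of the boundary lying in $\CC \setminus \Omega$ together with a crude a priori bound; since $f$ has finite order, on a disc of radius $\eps_j r_j$ about a point of modulus $\asymp r_j$ we have $|f| \lesssim \exp(r_j^{A})$ for some fixed $A$, and a Hadamard three-circles / harmonic-measure estimate shows that the contribution of the tiny bad arc to $\log|f(z_0)|$ is $\lesssim \eps_j \log \exp(r_j^A) \to 0$ provided $\eps_j$ decays fast enough (which we may arrange, possibly passing to a sparser subsequence of radii). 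This is exactly the point where ``finite order'' enters, and it is the only delicate estimate; everything else is the maximum principle and Liouville.
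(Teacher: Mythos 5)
There is a genuine gap, and it sits exactly at the step you flag as ``the only delicate estimate.'' The averaging argument in your step (2) can only produce a radius $r_j\in[R_j,2R_j]$ with $|\Omega\cap\partial D(0,r_j)|\le \eps_j r_j$ for $\eps_j$ comparable to the area density $m_2(\Omega\cap D(0,2R_j))/R_j^2$ (if the bad radii had smaller measure one could do better, but in general one cannot), and the hypothesis only guarantees that this density tends to $0$ --- at an arbitrary, possibly extremely slow, rate such as $1/\log\log R$. Passing to a sparser subsequence of radii does not improve this rate. On the other hand, the two-constants/harmonic-measure estimate you invoke on the single circle $|z|=r_j$ gives $\log|f(z_0)|\le \log C+\omega_j\log M_f(r_j)$, where $\omega_j$ is the harmonic measure of the bad arc seen from $z_0$; for an arc of length $\eps_j r_j$ on a circle of radius $r_j$ viewed from a point at distance $\asymp r_j$ this is $\asymp\eps_j$, and this is sharp. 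Since $\log M_f(r_j)\asymp r_j^{\rho}$ for a function of order $\rho>0$, you would need $\eps_j=o(r_j^{-\rho})$, a polynomial rate that you cannot arrange. The patching with small discs does not rescue this: the boundaries of those discs may again meet $\Omega$ in sets that are only ``small'' in the same weak sense, while the a priori bound $\exp(r_j^{A})$ on them is unchanged, so the problem simply recurs one scale down. (Measurability of $\Omega$, which you also worry about, is not the real issue: one may replace $\Omega$ by the open set $\{|f|>C\}$.) Note also that your parenthetical claim that finite order is not needed contradicts the rest of your own argument, where it is exactly what you must beat.

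What is needed, and what the paper does, is a multi-scale harmonic measure estimate rather than a single-circle one. The paper considers the component $W(R)$ of $\{\log|f|>0\}\cap D(0,R)$ through a point where $f$ is large and applies the Ahlfors--Carleman estimate: the harmonic measure of the outer boundary $S(R)=\{|z|=R\}\cap\partial W(R)$ is at most $C\exp\bigl(-\pi\int_{r_0}^{R}dr/\sigma(r)\bigr)$, where $\sigma(r)$ is the length of $\partial W(R)\cap\{|z|=r\}$. Zero area density forces $\sigma(r)<\eps r$ on a subset of each dyadic interval $[2^k,2^{k+1}]$ of measure at least $2^{k-1}$, so the integral is at least $\tfrac{1}{4\eps}(n-n_0)$ over $n-n_0$ dyadic scales, and the harmonic measure decays like $R^{-\pi/(4\eps)}$ --- faster than any prescribed power of $R$ once $\eps$ is small. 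It is this superpolynomial decay, accumulated over all scales between $r_0$ and $R$, that beats the finite-order bound $\log|f|\lesssim R^{N}$ on $\{|z|=R\}$; a one-circle two-constants argument of the kind you propose would only prove the theorem under the much stronger hypothesis that the density is $O(R^{-N})$. To repair your proof you would have to replace step (3) by such a Carleman-- or Beurling--Tsuji--type estimate, at which point you have essentially reproduced the paper's argument.
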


\begin{proof}
We prove an equivalent statement: {\it If $f$ is an entire function of finite order 
and $|f(z)|\to 0$ as $|z|\to \infty$, $z \notin \Omega$, for some 
set $\Omega$ of zero area density, then $f\equiv 0$.}

Assume that $f$ is non-zero. Then $\phi(z) = \log |f(z)|$ 
is subharmonic. Since $f$ takes arbitrarily large values, we may assume 
without loss of generality that $\phi(0) =1$. At the same time $\phi <0$ 
on $\CC\setminus \Omega$ for some set $\Omega$ of zero density. 
Let  $W(R)$ be the connected component of the open set
$\{z:\  \phi(z)>0\}\cap D(0, R)$ which contains the point 0
and let $S(R) = \{|z|=R\} \cap \partial W(R)$. Denote by $\sigma(R)$ the total length
of the arcs in $S(R)$. 

Denote by $\omega_{W(R)}(0, E)$ the harmonic measure at $0$ of 
$E\subset \partial W(R)$.  
By the Ahlfors--Carleman estimate \cite[Ch. IV, Th. 6.2]{gama}, 
there exists $r_0>0$ such that for $R>r_0$, 
$$
\omega_{W(R)}(0, S(R)) \le C\exp\bigg(-\pi\int_{r_0}^R\frac{dr}{\sigma(r)}\bigg),
$$
where $C>0$ is some absolute constant.
Recall that $f$ is of finite order. So, for some $C_1, N>0$,  
$\phi(z) \le C_1 R^N$, $|z|=R$. 
Since $\phi =0$ on $\partial  W(R)\setminus \{|z|=R\}$ and $\phi(z) \lesssim R^N$, 
$|z|=R$, we conclude by the ``Two Constants Theorem'' that
$$
\phi(0)\le C_1 R^N \omega_{W(R)}(0, S(R)) 
\le CC_1 R^N \exp\bigg(-\pi\int_{r_0}^R\frac{dr}{\sigma(r)}\bigg).
$$

It remains to show that since the set $\{z: \ \phi(z)>0\}$ has zero density, 
we have $\sigma(r) = o(r)$ on ``most of the circles''. This can be formalized as follows. 
Given $\vep>0$, for any sufficiently large $k$ (say, $k\ge n_0$) there exists
$E_k \subset [2^k, 2^{k+1}]$ with $|E_k|> 2^{k-1}$ such that for $r\in E_k$
we have $\sigma(r) <\vep r$. Otherwise, 
$$
m_2 (W(R) \cap\{2^k<|z|< 2^{k+1}\}) = \int_{2^k}^{2^{k+1}} \sigma(r) dr \ge 
\vep 2^{2k-1}, 
$$
a contradiction to the fact that $\Omega$ has zero area density.

Now, for $2^{n+1}\le R\le 2^{n+2}$ we have for some constant $C_2>0$,
$$
\begin{aligned}
\phi(0) &\le C_2 R^N \exp\bigg(-\pi\sum_{k=n_0}^n
\int_{E_k}  \frac{dr}{\sigma(r)}\bigg) \\
& \le C_2 R^N \exp\bigg(-\pi\sum_{k=n_0}^n
\int_{E_k}  \frac{dr}{\vep r}\bigg) \\
&\le C_2 R^N \exp\Big(-\frac{\pi}{4\vep} (n-n_0)\Big) \\
& \le C_2 R^N  \exp\Big(-\frac{\pi}{4\vep} (\log R-n_0)\Big).
\end{aligned}
$$
If $\vep$ is sufficiently small and $R$ is sufficiently large, we conclude that
$\phi(0) <1$, a contradiction.
\end{proof}

In \cite{abb} the following properties of functions in the spaces 
$\mathcal{H}(T,A,\mu)$ were discussed.

\begin{lemma} \textup(\cite[Lemma 2.5]{abb}\textup)
\label{gr1}
Let $A$ be an entire function of order $\rho$ 
with the zero set $T$. Then for any $\vep>0$ there exists
a set $E\subset (0,\infty)$ of zero linear density \textup(i.e., 
$|E \cap (0, R)| = o(R)$, $R\to\infty$, where 
$|e|$ denotes one-dimensional Lebesgue measure of $e\subset \RR$\textup) such that
for any entire function $f \in \mathcal{H}(T,A,\mu)$,
\begin{equation}
\label{cart}
|f(z)| \lesssim |z|^{\rho+1+\vep} |A(z)|, \qquad |z|\notin E.
\end{equation}

In particular, if $A$ is of order $\rho$ and of type $\varkappa$, then
any element of $\mathcal{H}(T,A,\mu)$  is of order at most $\rho$ 
and of type at most $\varkappa$ with respect to this order.
\end{lemma}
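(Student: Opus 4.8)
The plan is to estimate each element $f \in \mathcal{H}(T,A,\mu)$ directly from its series representation $f(z) = A(z)\sum_n \frac{a_n \mu_n^{1/2}}{z-t_n}$ with $\{a_n\}\in\ell^2$. Dividing by $A(z)$, the task reduces to bounding the Cauchy-type sum $S(z) = \sum_n \frac{a_n\mu_n^{1/2}}{z-t_n}$ by $|z|^{\rho+1+\vep}$ off a set of circles of zero linear density. First I would split the sum according to whether $|t_n| \le \frac{1}{2}|z|$, $\frac{1}{2}|z| < |t_n| < 2|z|$, or $|t_n|\ge 2|z|$. For the ``far'' pieces (first and third groups), the denominators satisfy $|z-t_n|\gtrsim \max(|z|,|t_n|)$, and Cauchy--Schwarz together with $\|\{a_n\}\|_{\ell^2}=\|f\|$ and the defining summability $\sum_n \frac{\mu_n}{|t_n|^2+1}<\infty$ gives a bound that is polynomial in $|z|$ of the required order; here one uses that $A$ has order $\rho$, hence the counting function of $T$ satisfies $n(R) \lesssim R^{\rho+\vep'}$, to control the number of points $t_n$ with $|t_n|\le 2|z|$ and the growth of $\sum_{|t_n|\le 2|z|}\mu_n$.

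The delicate term is the ``near'' piece, the sum over $n$ with $\frac12|z| < |t_n| < 2|z|$, where the denominator $z - t_n$ can be arbitrarily small. Here I would use the standard Cartan-type / Borel--Nevanlinna argument: the total measure of the ``bad'' radii — those $R$ for which $\dist(R, |T|)$ is too small, say where there is some $t_n$ with $||t_n| - R|$ small relative to $|t_n|^{-\rho-\vep}$ — has zero linear density, precisely because $n(R)\lesssim R^{\rho+\vep'}$ bounds how many exceptional intervals can occur near radius $R$. For $|z| = R$ outside this exceptional set $E$, one has $\dist(z, T) \gtrsim |z|^{-\rho-1}$ (or a comparable lower bound), so each near term is $\lesssim \mu_n^{1/2}|a_n|\,|z|^{\rho+1}$, and summing the $O(|z|^{\rho+\vep'})$ such terms via Cauchy--Schwarz yields the bound $|S(z)| \lesssim |z|^{\rho+1+\vep}$ after adjusting $\vep$. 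Assembling the three pieces gives \eqref{cart}.

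The main obstacle is making the exceptional set $E$ genuinely of zero linear density while keeping it uniform over all $f\in\mathcal{H}(T,A,\mu)$ — the set must depend only on $T$ (equivalently on $A$) and $\vep$, not on the individual function. This is handled by choosing $E$ as the union over dyadic blocks $[2^k, 2^{k+1}]$ of small neighborhoods of the moduli $\{|t_n| : 2^{k-1}\le |t_n|\le 2^{k+2}\}$, each of length $\sim |t_n|^{-\rho-\vep}$; the number of such $t_n$ is $O(2^{k(\rho+\vep')})$, so the total length inside each block is $O(2^{k(\rho+\vep')}\cdot 2^{-k(\rho+\vep)}) = o(2^k)$ once $\vep' < \vep$, which is exactly zero linear density. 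The final assertion about order and type is then immediate from \eqref{cart}: if $A$ has order $\rho$ and type $\varkappa$, then $|f(z)| \lesssim |z|^{\rho+1+\vep}|A(z)|$ off $E$ forces $f$ to have order at most $\rho$ and type at most $\varkappa$, since the polynomial factor $|z|^{\rho+1+\vep}$ does not affect the order and can be absorbed into an arbitrarily small increase of type, while growth along the few radii in $E$ is controlled by the maximum modulus principle applied on annuli straddling $E$.
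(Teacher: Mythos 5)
The paper does not actually prove this lemma; it is quoted from \cite[Lemma 2.5]{abb}, so there is no in-text argument to compare with. Your overall strategy --- write $f/A$ as the Cauchy sum $\sum_n a_n\mu_n^{1/2}(z-t_n)^{-1}$, split into far and near zones, and delete a Cartan-type exceptional set of radii depending only on $T$ and $\vep$ --- is the standard and surely the intended route; your treatment of the two far zones and of the zero linear density of $E$ is fine.

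There is, however, a quantitative gap in the near zone: your bookkeeping does not produce the stated exponent $\rho+1+\vep$. Off your exceptional set you have $\dist(z,T)\gtrsim |z|^{-\rho-\vep}$ (or, with the radius $|z|^{-\rho-1}$ you actually write, something weaker), so each near term is $\lesssim |a_n|\mu_n^{1/2}|z|^{\rho+\vep}$; but summing the $O(|z|^{\rho+\vep'})$ near terms by Cauchy--Schwarz with only the termwise bound $\mu_n\lesssim |t_n|^2+1\asymp |z|^2$ gives $\sum_{\mathrm{near}}|a_n|\mu_n^{1/2}\le \|a\|_{\ell^2}\bigl(\sum_{\mathrm{near}}\mu_n\bigr)^{1/2}\lesssim |z|^{1+\rho/2+\vep'/2}$, so the near sum comes out as $|z|^{3\rho/2+1+\vep+\vep'/2}$, which overshoots the claim by $|z|^{\rho/2}$ (and by more if you really use $\dist(z,T)\gtrsim|z|^{-\rho-1}$). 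The repair is to use the hypothesis $\sum_n\mu_n(1+|t_n|^2)^{-1}<\infty$ globally rather than termwise: Cauchy--Schwarz gives $\sum_n|a_n|\mu_n^{1/2}(1+|t_n|)^{-1}\le\|a\|_{\ell^2}\bigl(\sum_n\mu_n(1+|t_n|)^{-2}\bigr)^{1/2}<\infty$, whence $\sum_{\mathrm{near}}|a_n|\mu_n^{1/2}\lesssim |z|$ (in fact $o(|z|)$), and the near sum is $\lesssim |z|^{\rho+\vep}\cdot|z|=|z|^{\rho+1+\vep}$ as required. Two smaller remarks: with exclusion intervals of length $|t_n|^{-\rho-\vep}$ the set $E$ even has finite total length, since the convergence exponent of the zero set of a function of order $\rho$ is at most $\rho$ and hence $\sum_n|t_n|^{-\rho-\vep}<\infty$, which is simpler than your dyadic count; and in the final order/type deduction you should choose the comparison radius $R'\in[R,(1+\delta)R]\setminus E$ with $\delta\to 0$ (possible precisely because $E$ has zero linear density), otherwise the type gets inflated by a factor $(R'/R)^{\rho}$.
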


\begin{lemma}
\label{gr2}
If $f\in \mathcal{H}(T,A,\mu)$, then 
$$
\|f\|_{\mathcal{H}(T,A,\mu)}^2 = \sum_n 
\frac{|f(t_n)|^2}{|A'(t_n)|^2 \mu_n} 
$$
and there exists a set $\Omega$ of zero area density 
such that 
$$
|f(z)| = o(|A(z)|), \qquad |z|\to \infty, \ \ z\in\CC\setminus\Omega.
$$
\end{lemma}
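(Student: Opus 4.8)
The first identity is essentially the definition of the norm in $\mathcal{H}(T,A,\mu)$ rewritten in terms of the values at the points $t_n$. Recall that $f = A(z)\sum_n \frac{a_n\mu_n^{1/2}}{z-t_n}$ with $\|f\|^2 = \sum_n |a_n|^2$. Since $A$ has a simple zero at each $t_n$, near $t_n$ only the $n$-th term of the series has a pole, and $\frac{A(z)}{z-t_n}$ tends to $A'(t_n)$ as $z\to t_n$ while all other terms vanish; hence $f(t_n) = a_n \mu_n^{1/2} A'(t_n)$, which gives $|a_n|^2 = \frac{|f(t_n)|^2}{|A'(t_n)|^2\mu_n}$ and the stated formula. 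This step is routine.

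For the growth statement, the plan is to split $f/A$ into a ``main'' block and a ``tail'' and apply Lemma \ref{verd} (or the consequence \eqref{meas}) to each. Fix $\eps>0$. Since $\{a_n\}\in\ell^2$ and $\sum_n \frac{\mu_n}{|t_n|^2+1}<\infty$, Cauchy--Schwarz gives $\sum_n \frac{|a_n|\mu_n^{1/2}}{|t_n|+1} \le \|a\|_{\ell^2}\big(\sum_n \frac{\mu_n}{(|t_n|+1)^2}\big)^{1/2}<\infty$; in particular $\sum_n |t_n^{-1} d_n|<\infty$ once $|t_n|$ is bounded away from $0$, where $d_n = a_n\mu_n^{1/2}$. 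First I would choose $M$ so large that $\sum_{n>M} \frac{|d_n|}{|t_n|+1} < \eps$ (discarding finitely many small $|t_n|$ harmlessly), and write
\[
\frac{f(z)}{A(z)} = \sum_{n\le M} \frac{d_n}{z-t_n} + \sum_{n>M} \frac{d_n}{z-t_n}.
\]
The finite sum is $O(1/|z|) = o(1)$ as $|z|\to\infty$ everywhere. For the infinite tail, apply the measure version \eqref{meas} of Lemma \ref{verd}: there is a set $\Omega_M$ of zero area density off which $\sum_{n>M}\frac{d_n}{z-t_n} = o(1)$ as $|z|\to\infty$. But I want a single $\Omega$ independent of $\eps$. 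To get this, run the argument along a sequence $\eps_j = 2^{-j}$: for each $j$ pick $M_j$ and a zero-density set $\Omega_j$, and on $\CC\setminus\Omega_j$ the tail past $M_j$ is $o(1)$; a standard diagonal/exhaustion argument then produces one zero-density set $\Omega$ (e.g. $\Omega = \bigcup_j (\Omega_j \cap \{|z|\ge R_j\})$ for a rapidly increasing sequence $R_j$, which still has zero area density) such that $f(z)/A(z)\to 0$ as $|z|\to\infty$, $z\notin\Omega$. Since $|A(z)|$ may vanish, the cleanest formulation is the stated $|f(z)| = o(|A(z)|)$, which is exactly what the displayed estimate asserts.

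The only mild obstacle is the uniformization in $\eps$: Lemma \ref{verd} is applied for each fixed truncation level and yields an $\eps$-dependent exceptional set, so one must assemble these into a single zero-density $\Omega$. This is handled by the exhaustion argument sketched above, using that a countable union of zero-density sets, each ``activated'' only beyond a large radius, again has zero area density. Everything else is bookkeeping with the Cauchy--Schwarz bound on $\sum |d_n|/(|t_n|+1)$ and the elementary fact that a finite Cauchy sum decays like $1/|z|$.
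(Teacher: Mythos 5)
Your proof is correct and follows essentially the same route as the paper: the norm identity is read off from $f(t_n)=A'(t_n)c_n\mu_n^{1/2}$, and the growth estimate comes from the Cauchy--Schwarz bound $\sum_n |c_n t_n^{-1}|\mu_n^{1/2}<\infty$ together with \eqref{meas}. The only difference is that your truncation at level $M$ and the subsequent diagonal argument are redundant: \eqref{meas} as stated already applies to the full sum under the hypothesis $\sum_n|t_n^{-1}d_n|<\infty$ and delivers a single set $\Omega$ of zero area density (the $\eps$-uniformization is absorbed into the proof of \eqref{meas} itself, via the partial-fraction identity reducing to Lemma \ref{verd}).
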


\begin{proof}
Note that for $f= A\sum_{n}\frac{c_n\mu_n^{1/2}}{z-t_n}
\in \mathcal{H}(T,A,\mu)$ we have $f(t_n) = A'(t_n)c_n \mu_n^{1/2}$
and, by definition, $\|f\|_{\mathcal{H}(T,A,\mu)} = \|\{c_n\}\|_{\ell^2}$.

To prove that $|f(z)| = o(|A(z)|)$ recall that $\sum_n (1+|t_n|)^{-2}\mu_n<\infty$
whence $\sum_n |c_n t_n^{-1}|\mu_n^{1/2} <\infty$ and the estimate 
follows from \eqref{meas}.
\end{proof}

Using the above estimates one can state various criteria 
for the inclusion of $f$ into $\mathcal{H}(T,A,\mu)$.

\begin{theorem} \textup(\cite[Theorem 2.6]{abb}\textup)
\label{inc}
Let $\mathcal{H}(T,A,\mu)$ be a Cauchy--de Branges space
and let $A$ be of finite order. Then an entire function $f$ 
is in $\mathcal{H}(T,A,\mu)$ if and only if the following three conditions hold:
\begin{enumerate} 
\item [(i)] 
$\sum_n\dfrac{|f(t_n)|^2}{|A'(t_n)|^2 \mu_n} <\infty$\textup;      
\smallskip
\item [(ii)]
there exists a set $E\subset (0,\infty)$ 
of zero linear density and $N>0$ such that $|f(z)| \le |z|^N |A(z)|$,
$|z| \notin E$\textup;
\smallskip
\item [(iii)] there exists a set $\Omega$ of positive area density such that 
$|f(z)| = o(|A(z)|)$, $|z|\to \infty$, $z\in \Omega$.
\end{enumerate}
\end{theorem}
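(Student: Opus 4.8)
The plan is to prove the two implications separately, the forward one being essentially already in hand. If $f\in\mathcal{H}(T,A,\mu)$, then (i) is the norm identity of Lemma \ref{gr2}; (ii) is the Cartan-type bound of Lemma \ref{gr1} with $N=\rho+1+\vep$ (here we use that $A$ has finite order $\rho$); and (iii) follows from the second assertion of Lemma \ref{gr2}, since the complement of a set of zero area density has full, hence positive, area density. So the real content is the converse: an entire $f$ satisfying (i)--(iii) should be shown to coincide with an explicitly constructed element of the space.

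I would construct that element by interpolation on $T$. Put $c_n=f(t_n)/\bigl(A'(t_n)\mu_n^{1/2}\bigr)$; by (i) we have $\{c_n\}\in\ell^2$, so
$$
h(z)=A(z)\sum_n\frac{c_n\mu_n^{1/2}}{z-t_n}=A(z)\sum_n\frac{f(t_n)}{A'(t_n)(z-t_n)}
$$
is, by definition, an element of $\mathcal{H}(T,A,\mu)$, and the interpolation identity recorded in the proof of Lemma \ref{gr2} gives $h(t_n)=A'(t_n)c_n\mu_n^{1/2}=f(t_n)$ for every $n$. Hence $\varphi:=f-h$ is entire and vanishes on all of $T$; since $A$ has only simple zeros and $\mathcal{Z}_A=T$, the quotient $\psi:=(f-h)/A$ is entire. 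It remains to prove $\psi\equiv0$, which yields $f=h\in\mathcal{H}(T,A,\mu)$.

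To pin down $\psi$ I would first show it is a polynomial, then that it vanishes. Combining (ii) with Lemma \ref{gr1} applied to $h\in\mathcal{H}(T,A,\mu)$, there are $N_0>0$ and a set $E_0\subset(0,\infty)$ of zero linear density (enlarged by the countable set $\{|t_n|\}$ if needed) with $|f(z)|+|h(z)|\lesssim|z|^{N_0}|A(z)|$ for $|z|\notin E_0$, hence $|\psi(z)|\lesssim|z|^{N_0}$ for $|z|\notin E_0$. Since $E_0$ has zero linear density it cannot contain a whole interval $[R,2R]$ for large $R$, so there is $R^\star\in[R,2R]\setminus E_0$; the maximum modulus principle on $\overline{D(0,R^\star)}$ then gives $\max_{|z|\le R}|\psi|\le\max_{|z|=R^\star}|\psi|\lesssim(R^\star)^{N_0}\lesssim R^{N_0}$, so $\psi$ is a polynomial of degree at most $N_0$. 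Finally, by the second assertion of Lemma \ref{gr2} applied to $h$ there is a set $\Omega_0$ of zero area density with $|h(z)|=o(|A(z)|)$ off $\Omega_0$; hence on $\Omega\setminus\Omega_0$, which still has positive area density and in particular is unbounded, we get $|\psi(z)|=o(1)$ as $|z|\to\infty$. A nonzero polynomial cannot tend to $0$ along an unbounded sequence, so $\psi\equiv0$ and the converse is proved.

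The only mildly delicate point is the step ``polynomially bounded off a thin set of radii $\Longrightarrow$ polynomial'', which rests on the observation that a zero-linear-density subset of $(0,\infty)$ cannot swallow every interval $[R,2R]$, so enough good circles survive for the maximum principle; alternatively, since $f$, $h$, $A$ all have finite order one could argue that $\psi$ has finite order and invoke the kind of reasoning behind Theorem \ref{dens}, but the elementary route is shorter. Everything else is routine density bookkeeping: that $\{z:|z|\in E\}$ has zero area density when $E$ has zero linear density, and that removing a zero-area-density set from a positive-area-density set leaves positive density. One may also note that the standing hypothesis ``$f$ of finite order'' is in fact forced by (ii) together with the finite order of $A$, via the same maximum-principle argument, and is kept only for convenience.
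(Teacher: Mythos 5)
This theorem is not proved in the paper at all --- it is imported verbatim from \cite[Theorem 2.6]{abb} --- so there is no in-paper proof to compare against. Your argument is correct and is the natural one built from the two lemmas quoted alongside it: the forward direction is exactly Lemmas \ref{gr1} and \ref{gr2}, and in the converse your interpolant $h$, the maximum-principle step on good circles $R^\star\in[R,2R]\setminus E_0$ showing $(f-h)/A$ is a polynomial, and the final $o(1)$ estimate on the positive-density set $\Omega\setminus\Omega_0$ (minus the null set $\mathcal{Z}_A$) all check out; I see no gap.
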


In many cases one can relax the conditions (ii)--(iii) and require the estimates
on a smaller set.
\medskip


\subsection{Proof of Theorem \ref{main1}}
Let $\nu = \sum_n \nu_n \delta_{s_n}$, $s_n\in \co$, $s_n \ne 0$,
$s_n\to 0$, and $t_n = s_n^{-1}$. Let $a, b\in L^2(\nu)$, $b_n\ne 0$ for any $n$
and $\sigma = |b|^2 \nu$. By Theorem \ref{main2} the perturbation $\LL = \A+a\otimes b$
is unitary equivalent to the model operator $\mathcal{M}_\beta$ on
$\mathcal{C}(K, \sigma)$ where $\beta$ is given by \eqref{beta}.
Now we put
$$
\mu_n = |t_n|^2 \sigma_n = |t_n|^2 |b_n|^2 \nu_n.
$$
Fix an entire function $A$ with zero set $\{t_n\}$ and $A(0) = 1$, 
and consider the space 
$\mathcal{H}(T,A,\mu)$. 

Put $G(z) = A(z) \beta(z^{-1})$. Then we have
\begin{equation}
\label{rep1}
\begin{aligned}
G(z)  = A(z) \beta(z^{-1}) & = A(z)\bigg( 1+ 
z \sum_n \frac{a_n \bar b_n t_n \nu_n}{z- t_n} \bigg) \\
& =
A(z) \bigg( 1+ 
\sum_n a_n \bar b_n t^2_n \nu_n \Big(\frac{1}{z- t_n} +\frac{1}{t_n} \Big) \bigg).
\end{aligned}
\end{equation}
It is easy to see that $G\in {\rm Assoc}\,(T, A, \mu)$ and $G(0) =1$.
\medskip

We will show that the mapping $U: f\mapsto A(z)z^{-1}f(z^{-1})$
maps $\mathcal{C}(K, \sigma)$ unitarily onto $\mathcal{H}(T,A,\mu)$ 
and realizes a unitary equivalence between 
$\mathcal{M}_\beta$ and $\mathcal{T}_G$.

Let $f(z) = \sum_n \frac{u_n \sigma_n}{z-s_n}$, $u=(u_n) \in L^2(\sigma)$ 
and $g(z) = A(z)z^{-1}f(z^{-1})$. Since
$$
z^{-1}f(z^{-1}) = \sum_n \frac{u_n t_n \sigma_n}{t_n-z} = 
\sum_n \frac{u_n t_n|t_n|^{-1} \sigma^{1/2}_n \mu_n^{1/2}}{t_n-z},
$$
we conclude that $g\in \mathcal{H}(T,A,\mu)$  and
$$
\|g\|_{\mathcal{H}(T,A,\mu)}^2 = \sum_n  |u_n|^2 \sigma_n = 
\|f\|^2_{\mathcal{C}(K, \sigma)}.
$$
Note also that $(zf)_\infty = \sum_n u_n \sigma_n = (Uf)(0)$.
Then we have 
$$
\begin{aligned}
(U\mathcal{M}_\beta f)(z) & = A(z)\big( 
z^{-2} f(z^{-1}) - (zf)_\infty z^{-1}\beta(z^{-1})\big)  \\
& = \frac{(Uf)(z) - (Uf)(0)G(z)}{z} = (\mathcal{T}_G Uf)(z).
\end{aligned}
$$
Thus, $\LL$ is unitary equivalent to $\mathcal{T}_G$. 

Finally, it is easy to see that any function $G \in {\rm Assoc}\,(T, A, \mu)$
with $G(0) = 1$ admits representation \eqref{rep1} for some $a, b, \nu$ such that 
$a, b \in L^2(\nu)$ and $\mu_n = |b_n|^2 |t_n|^2\nu_n$.
Therefore, the function $\beta$ such that $G(z)  = A(z) \beta(z^{-1})$ 
satisfies conditions (i)--(iii) of Theorem \ref{main2}. Thus, any such function
$G$ appears in the model of some rank one perturbation of the compact normal operator
with the spectral measure $\nu$. 
\qed
\medskip 

In what follows we will often use the following simple observation. 

\begin{lemma}
\label{gin}
The function $G$ given by \eqref{rep1}
belongs to $\mathcal{H}(T,A,\mu)$ if and only if 
$a\in zL^2(\nu)$ \textup(i.e., $\sum_n |a_n|^2|t_n|^2 \nu_n <\infty$\textup)
and
\begin{equation}
\label{degen}
1+ \sum_n  a_n \bar b_n t_n  \nu_n =0.
\end{equation}
\end{lemma}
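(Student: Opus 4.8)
The plan is to translate ``$G\in\mathcal{H}(T,A,\mu)$'' into a statement about the $\ell^2$-coefficient sequence of $G$, using that $\mathcal{H}(T,A,\mu)$ is by definition the set of functions $A(z)\sum_m\frac{c_m\mu_m^{1/2}}{z-t_m}$ with $\{c_m\}\in\ell^2$, together with the already-recorded fact that $A\in{\rm Assoc}\,(T,A,\mu)\setminus\mathcal{H}(T,A,\mu)$.

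First I would identify the only candidate coefficient sequence of $G$. Writing $A(z)=A'(t_n)(z-t_n)+O((z-t_n)^2)$ near $t_n$ and reading off the residue of the bracketed factor in \eqref{rep1}, one gets $G(t_n)=A'(t_n)\,a_n\bar b_n t_n^2\nu_n$. On the other hand, any $f=A\sum_m\frac{c_m\mu_m^{1/2}}{z-t_m}\in\mathcal{H}(T,A,\mu)$ satisfies $f(t_n)=A'(t_n)c_n\mu_n^{1/2}$. Hence, if $G\in\mathcal{H}(T,A,\mu)$, its coefficients are forced to be $c_n=a_n\bar b_n t_n^2\nu_n/\mu_n^{1/2}$, and since $\mu_n^{1/2}=|t_n||b_n|\nu_n^{1/2}$ this gives $|c_n|^2=|a_n|^2|t_n|^2\nu_n$. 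Thus $\{c_n\}\in\ell^2$ is equivalent to $a\in zL^2(\nu)$, which establishes the necessity of that condition.

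Second, assume $a\in zL^2(\nu)$ (so $\{c_n\}\in\ell^2$). The second line of \eqref{rep1} already exhibits $G$ in the separated form
$$
G(z)=\Bigl(1+\sum_m a_m\bar b_m t_m\nu_m\Bigr)A(z)+A(z)\sum_m\frac{c_m\mu_m^{1/2}}{z-t_m},
$$
where both series converge absolutely: the first by Cauchy--Schwarz from $a\in zL^2(\nu)$ and $b\in L^2(\nu)$, which give $\sum_m|a_m\bar b_m t_m|\nu_m<\infty$, and the second because $\{c_m\}\in\ell^2$ and $\sum_m\mu_m(|t_m|^2+1)^{-1}<\infty$. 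The second term on the right lies in $\mathcal{H}(T,A,\mu)$ by definition. Therefore $G\in\mathcal{H}(T,A,\mu)$ if and only if $\bigl(1+\sum_m a_m\bar b_m t_m\nu_m\bigr)A\in\mathcal{H}(T,A,\mu)$, and since $A\notin\mathcal{H}(T,A,\mu)$ (and $\mathcal{H}(T,A,\mu)$ is a linear space), this holds precisely when the scalar factor vanishes, i.e. when \eqref{degen} holds. Conversely, when $a\in zL^2(\nu)$ and \eqref{degen} hold, the displayed identity collapses $G$ to its second term, which is in $\mathcal{H}(T,A,\mu)$; this completes the equivalence.

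The lemma is not deep, but the two points that require a little care are exactly these: recognizing that the necessity of $a\in zL^2(\nu)$ is forced by matching the values $G(t_n)$ against the reproducing-kernel (coefficient) normalization, and recognizing that, once the Cauchy part has been peeled off in the separated form, the necessity of \eqref{degen} follows immediately from $A\notin\mathcal{H}(T,A,\mu)$. If one prefers not to invoke $A\notin\mathcal{H}(T,A,\mu)$, the alternative is to combine the separated form with Lemma \ref{gr2} (which gives $G(z)/A(z)\to0$ as $|z|\to\infty$ off a set of zero area density) and with the decay \eqref{meas} of the Cauchy part along the same regime, forcing $1+\sum_m a_m\bar b_m t_m\nu_m=0$.
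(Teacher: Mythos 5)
Your proof is correct and follows essentially the same route as the paper's: you read off the forced coefficients from the values $G(t_n)$ (which is exactly the content of the norm formula in Lemma \ref{gr2} that the paper invokes), split $G$ via the representation \eqref{rep2}, and use $A\notin\mathcal{H}(T,A,\mu)$ to force the constant term to vanish. No substantive differences.
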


\begin{proof} 
Assume that $G\in \mathcal{H}(T,A,\mu)$. Note that $G(t_n) = A'(t_n) t_n^2 a_n \bar b_n \nu_n$.
Then, by Lemma \ref{gr2} and the fact that $\mu_n = |t_n|^2 |b_n|^2 \nu_n$,
$$
\sum_n\frac{|G(t_n)|^2}{|A'(t_n)|^2 \mu_n} = \sum_n |a_n|^2|t_n|^2 \nu_n <\infty.
$$ 
Now the series $\sum_n |a_n b_n t_n|\nu_n$ converges and we may write
\begin{equation}
\label{rep2}
G(z) = A(z)
\bigg(1+ \sum_n a_n \bar b_n t_n \nu_n   + 
\sum_n \frac{a_n \bar b_n t^2_n \nu_n}{z- t_n} \bigg).
\end{equation}
Inclusion $A(z) \sum_n \frac{a_n \bar b_n t^2_n \nu_n}{z- t_n} \in \mathcal{H}(T,A,\mu)$
also follows from the condition $\sum_n |a_n|^2|t_n|^2 \nu_n <\infty$. Since, 
$A \notin \mathcal{H}(T,A,\mu)$ we conclude that the coefficient given 
by the left-hand side of \eqref{degen} is zero. 

The converse statement follows immediately from \eqref{rep2}.
\end{proof} 

It is easy to describe the point spectrum and eigenfunctions of the model operator
$\mathcal{T}_G$.  

\begin{lemma}
\label{eig}
$\eta \ne 0$ is an eigenvalue for $\mathcal{T}_G$ if and only if $\lambda = \eta^{-1}$ 
is a zero of $G$. The corresponding eigenvector of $\mathcal{T}_G$ is given
by $\frac{G}{z-\lambda}$ while the reproducing kernel 
$k_\lambda$ of $\mathcal{H}(T,A,\mu)$ 
is the eigenvector of $\mathcal{T}_G^*$ corresponding to $\bar\eta$. 
\end{lemma}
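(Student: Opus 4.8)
The plan is to verify the eigenvalue equation directly. Suppose $\mathcal{T}_G f = \eta f$ for some $f \in \mathcal{H}(T,A,\mu)$, $f \ne 0$, $\eta \ne 0$. By definition of $\mathcal{T}_G$ this reads $f(z) - f(0)G(z) = \eta z f(z)$, i.e.
$$
(1 - \eta z) f(z) = f(0) G(z).
$$
First I would handle the case $f(0) = 0$: then $(1 - \eta z)f(z) \equiv 0$, so $f \equiv 0$ since $\eta \ne 0$, contradicting $f \ne 0$. Hence $f(0) \ne 0$, and after rescaling we may take $f(0) = 1$, giving $(1 - \eta z) f(z) = G(z)$. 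Since the left side must vanish at $z = \eta^{-1}$ (as $f$ is entire), we get $G(\eta^{-1}) = 0$; thus $\lambda := \eta^{-1}$ is a zero of $G$. Conversely, if $G(\lambda) = 0$ with $\lambda \ne 0$, set $\eta = \lambda^{-1}$; since $G \in {\rm Assoc}\,(T, A, \mu)$ and $G(\lambda) = 0$, the remark after the definition of ${\rm Assoc}$ gives $\frac{G(z)}{z - \lambda} \in \mathcal{H}(T,A,\mu)$. Put $f(z) = \frac{G(z)}{-\eta(z-\lambda)} = \frac{G(z)}{1 - \eta z}$ (using $-\eta(z - \lambda) = 1 - \eta z$). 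Then $f(0) = G(0) = 1 \ne 0$, and a direct computation gives $f(z) - f(0)G(z) = f(z)(1 - (1 - \eta z)) = \eta z f(z)$, so $\mathcal{T}_G f = \eta f$; moreover $f$ is a nonzero scalar multiple of $\frac{G}{z-\lambda}$, which is therefore the eigenvector.

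For the adjoint statement, I would use the reproducing kernel identity. For any $g \in \mathcal{H}(T,A,\mu)$ and $\lambda \in \mathcal{Z}_G$ with $\eta = \lambda^{-1}$, observe that
$$
(\mathcal{T}_G g)(\lambda) = \frac{g(\lambda) - g(0) G(\lambda)}{\lambda} = \frac{g(\lambda)}{\lambda} = \eta\, g(\lambda),
$$
since $G(\lambda) = 0$. Therefore
$$
\langle g, \mathcal{T}_G^* k_\lambda \rangle = \langle \mathcal{T}_G g, k_\lambda \rangle = (\mathcal{T}_G g)(\lambda) = \eta\, g(\lambda) = \eta\, \langle g, k_\lambda \rangle = \langle g, \bar\eta\, k_\lambda \rangle
$$
for all $g$, whence $\mathcal{T}_G^* k_\lambda = \bar\eta\, k_\lambda$. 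This also shows $\bar\eta$ is an eigenvalue of $\mathcal{T}_G^*$ whenever $\eta^{-1} \in \mathcal{Z}_G$, consistent with the general spectral correspondence.

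None of these steps presents a serious obstacle; the only points requiring a little care are: (a) confirming that the division $\frac{G(z)}{z-\lambda}$ genuinely lands in $\mathcal{H}(T,A,\mu)$, which is exactly the content of $G \in {\rm Assoc}\,(T, A, \mu)$ together with $G(\lambda) = 0$ (noted explicitly in the excerpt), and (b) checking that $\frac{G}{z-\lambda}$ is nonzero, which holds because $A$ — and hence $G$, as $G \in \mathcal{P}_1\mathcal{H} \setminus \{0\}$ with $G(0) = 1$ — is not identically zero. The verification that the constructed $f$ lies in $\mathcal{H}$ and satisfies the eigenequation is a one-line algebraic identity. I would also remark that $\eta = 0$ is excluded from the statement precisely because $\mathcal{T}_G$ models a compact operator whose relation to the point spectrum at the origin is different in nature (the kernel of $\mathcal{T}_G$ corresponds to multiples of $G$ when $G \in \mathcal{H}$, the degenerate case of Lemma \ref{gin}).
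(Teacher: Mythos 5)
Your proof is correct and follows essentially the same route as the paper: the paper likewise observes that $\mathcal{T}_G f=\eta f$ forces $f=c\,\frac{G}{1-\eta z}$, so that $f$ being entire and in $\mathcal{H}(T,A,\mu)$ is equivalent to $G(\eta^{-1})=0$, and it proves the adjoint statement by exactly your reproducing-kernel computation. Your write-up merely spells out the rescaling to $f(0)=1$ and the membership $\frac{G}{z-\lambda}\in\mathcal{H}(T,A,\mu)$, which the paper leaves implicit.
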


\begin{proof} 
We have $\mathcal{T}_G f= \eta f$ if and only if $f = c\frac{G}{1-\eta z} \in 
\mathcal{H}(T,A,\mu)$. Since $f$
is entire, this is equivalent to $G(\lambda) = 0$, $\lambda = \eta^{-1}$.
Note that $0$ is an eigenvalue of $\mathcal{T}_G$ 
if and only if $G\in \mathcal{H}(T,A,\mu)$. 

Now, if $\lambda \ne 0$ and $G(\lambda) =0$, then we have for any 
$f\in \mathcal{H}(T,A,\mu)$,
$$
(f, \mathcal{T}_G^* k_\lambda) =  
(\mathcal{T}_G f, k_\lambda)  
= \frac{f(z) - f(0) G(z)}{z}\bigg|_{z=\lambda} = \frac{f(\lambda)}{\lambda} = 
\frac{1}{\lambda}(f, k_\lambda),
$$
whence $\mathcal{T}_G^* k_\lambda = \bar \eta k_\lambda$.
\end{proof} 

\begin{remark}
{\rm  If $G$ has a zero $\lambda$ of multiplicity $m>1$, then
the corresponding root subspace for $\mathcal{T}_G$ 
is spanned by $\frac{G}{(z-\lambda)^j}$, $j=1, \dots, m$.
Similarly, one can find root vectors for $\mathcal{T}_G^*$ which are essentially
the reproducing kernels for the derivatives (see \cite{by} for details). 
To avoid uninteresting technicalities we assume in what follows that all zeros 
of $G$} {\it are simple}.
\end{remark}

Thus, the properties of rank one perturbations of normal compact operators
may be translated via the functional model to the geometric properties 
of systems of reproducing kernels in $\mathcal{H}(T,A,\mu)$, e.g.:
\begin{itemize}
\item
$\LL^*$ is complete if and only if the system $\{k_\lambda\}_{\lambda \in \mathcal{Z}_G}$
is complete in $\mathcal{H}(T,A,\mu)$ (equivalently, $\mathcal{Z}_G$  
is a {\it uniqueness set} for $\mathcal{H}(T,A,\mu)$);
\item
$\LL$ is complete if and only if the biorthogonal system
$\big\{\frac{G}{z-\lambda} \big\}_{\lambda \in \mathcal{Z}_G}$ is complete; 
\item  
$\LL$ admits spectral synthesis if and only if 
the system $\{k_\lambda\}_{\lambda \in \mathcal{Z}_G}$ 
is {\it hereditarily complete}
(see the definition in Section \ref{synte}).
\end{itemize}

Uniqueness sets in de Branges spaces (and in a more general setting of model subspaces 
of the Hardy space) were studied in 
\cite{bar06, fric, mp}. Completeness of systems biorthogonal to
systems of reproducing kernels was considered in \cite{bb}, while
in \cite{bbb, bbb1} a more or less complete understanding 
of hereditary completeness in de Branges spaces was achieved.
\bigskip


\section{Completeness of $\LL$ and $\LL^*$}
\label{comp1}

In this section we prove Theorems \ref{bio1} and \ref{bio2}
on completeness of rank one perturbations and their adjoints. 
First we remark that the condition $b_n\ne 0$ (which is required in
the functional model) does not lead to a loss of generality. 

\begin{lemma}
\label{bro}
Let $\A$ be a compact normal operator with simple spectrum $\{s_n\}_{n\in N}$, i.e.,
multiplication by $z$ in $H = L^2(\nu)$, $\nu = \sum_n \nu_n\delta_{s_n}$. Let 
$N=N_1\cup N_2$, $N_1\cap N_2 =\emptyset$. Then we can write $\A = \A_1\oplus \A_2$,
where $\A_j$ is multiplication by $z$ in $H_j = L^2(\nu|_{\{s_n\}_{n\in N_j}})$. 
Now let $\LL =\A+a\otimes b$. Assume that with respect to decomposition 
$H = H_1\oplus H_2$ we have $a=a_1\oplus a_2$, $b=b_1 \oplus 0$. Consider the operator 
$\LL_1 = \A_1 + a_1 \otimes b_1$ on $H_1$. Then
\begin{enumerate}
\item[(i)] 
if $\LL_1$ is complete, then $\LL$ is complete\textup;
\item[(ii)]
if $\LL_1^*$ is complete, then $\LL^*$ is complete.
\end{enumerate}
\end{lemma}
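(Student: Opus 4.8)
The plan is to observe that $\LL_2 = \A_2$ is just the normal operator on $H_2$, so $H_2$ is $\LL^*$-invariant (indeed $\LL^* = \A_1^* \oplus \A_2^*$ plus the rank one term $b \otimes a = (b_1 \oplus 0) \otimes (a_1 \oplus a_2)$, which has range contained in $H_1 \oplus 0$ only after one observes carefully how the two pieces interact). Let me be precise: with $b = b_1 \oplus 0$, for $x = x_1 \oplus x_2$ we have $\LL x = \A x + (x,b) a = (\A_1 x_1 \oplus \A_2 x_2) + (x_1, b_1)(a_1 \oplus a_2)$. So $\LL$ does not decompose, but the subspace $H_2$ is $\LL^*$-invariant: $\LL^* x = \A^* x + (x,a) b$, and since $b \in H_1$, if $x \in H_2$ then $\LL^* x = \A_2^* x_2 + (x,a) b_1 \oplus 0$... wait, that lands in $H_1$, not $H_2$. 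Let me instead check that $H_1$ is $\LL$-invariant: if $x = x_1 \oplus 0$, then $\LL x = \A_1 x_1 \oplus 0 + (x_1, b_1)(a_1 \oplus a_2)$, which has a component in $H_2$ unless $a_2 = 0$. So neither coordinate subspace is invariant in general; the correct structure is that $\LL$ is block \emph{triangular}.

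First I would exhibit the block-triangular structure. Since $b \in H_1$, the functional $x \mapsto (x,b)$ annihilates $H_2$; hence for $x = x_1 \oplus x_2$ we get $\LL x = (\A_1 x_1 + (x_1,b_1) a_1) \oplus (\A_2 x_2 + (x_1, b_1) a_2)$. Writing $P_1, P_2$ for the orthogonal projections onto $H_1, H_2$, this says $P_2 \LL = \A_2 P_2 + (P_1 \,\cdot\,, b_1) a_2$, i.e. the compression of $\LL$ to $H_1$ along $H_2$ is exactly $\LL_1$, and the compression to $H_2$ is $\A_2$, which is normal. Thus in $2\times 2$ block form $\LL = \begin{pmatrix} \LL_1 & 0 \\ C & \A_2 \end{pmatrix}$ for the intertwining operator $C = a_2 \otimes b_1$ (mapping $H_1 \to H_2$). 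Dually, $\LL^* = \begin{pmatrix} \LL_1^* & C^* \\ 0 & \A_2^* \end{pmatrix}$ is block triangular the other way, with $H_2$ now being $\LL^*$-invariant and the compression of $\LL^*$ to $H_2$ equal to $\A_2^*$.

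Next I would run the standard argument that triangular operators inherit completeness from their diagonal blocks. For (ii): $H_2$ is $\LL^*$-invariant and $\LL^*|_{H_2} = \A_2^*$ is normal, hence complete in $H_2$; meanwhile the compression of $\LL^*$ to the (not necessarily invariant, but complementary) subspace $H_1$ is $\LL_1^*$, which is complete by hypothesis. Given the root vectors of $\A_2^*$ spanning $H_2$, and given that for each root vector $y$ of $\LL_1^*$ in $H_1$ there is a root vector $\tilde y = y \oplus (\text{something in } H_2)$ of $\LL^*$ with $P_1 \tilde y = y$ (solve the triangular eigenvalue equation: for $(\LL^* - \lambda) \tilde y = 0$ with $\tilde y = y \oplus z$, the bottom row forces $(\A_2^* - \lambda) z = 0$ when $\lambda \notin \sigma(\A_2^*)$ so $z = 0$ and $\tilde y = y \oplus 0$ works; for generalized eigenvectors one solves successively, and for the finitely many $\lambda$ in $\sigma(\A_2^*)$ one adjusts using that $\A_2^*$'s root vectors already span $H_2$), the span of all these root vectors of $\LL^*$ contains $H_2$ and projects onto a dense subset of $H_1$, hence is dense in $H$. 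Claim (i) is the mirror image with $\LL$ in place of $\LL^*$: $H_1$ is... no — here one uses that $H_2$ being invariant for $\LL^*$ means $H_1 = H_2^\perp$... actually the clean way for (i) is to note $H_2^\perp = H_1$ and run the same lemma on $\LL$ directly: $\LL = \begin{pmatrix} \LL_1 & 0 \\ C & \A_2\end{pmatrix}$ has $H_2 = \{0\}\oplus H_2$ as an invariant subspace with $\LL|_{H_2} = \A_2$ normal and complete, and compression to $H_1$ equal to $\LL_1$; the same successive-solution argument lifts root vectors of $\LL_1$ in $H_1$ to root vectors of $\LL$, so completeness of $\LL_1$ gives completeness of $\LL$.

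The main obstacle is the bookkeeping at the overlap $\sigma(\A_2) \cap \sigma(\LL_1)$ (and its conjugate for the adjoint): at these finitely-or-countably-many eigenvalues the naive lift $y \mapsto y \oplus 0$ may fail to be an eigenvector, and one must instead argue that the $\LL$-root subspace at such $\lambda$ \emph{together with} the already-available $\A_2$-root vectors at $\lambda$ still spans the same space that $\LL_1$-root vectors plus $\A_2$-root vectors would. This is handled by a dimension count inside each (finite-dimensional, for compact operators) generalized eigenspace: the generalized eigenspace of $\LL$ at $\lambda$ surjects under $P_1$ onto that of $\LL_1$ at $\lambda$ (solvability of the triangular system modulo the image, using that $\A_2 - \lambda$ restricted to the complement of its own root space at $\lambda$ is invertible), and its kernel under $P_1$ sits inside the $\A_2$-root space at $\lambda$ — so adding the $\A_2$-root vectors (which we have, since $\A_2$ is complete) recovers everything. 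Once that local surjectivity is in hand, the global completeness statement follows by closing up spans. I do not expect any analytic subtlety beyond this purely linear-algebraic overlap argument, since compactness makes all root spaces finite-dimensional and makes the relevant direct-sum decompositions available.
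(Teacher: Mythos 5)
Your overall strategy --- exhibit the block-triangular structure of $\LL$ with respect to $H=H_1\oplus H_2$ and lift root vectors of the diagonal blocks, with a separate accounting at the overlap of the spectra --- is exactly the paper's, and your treatment of part (i) is correct: $\{0\}\oplus H_2$ is $\LL$-invariant with $\LL\vert_{\{0\}\oplus H_2}=\A_2$, so the basis vectors $0\oplus e_m$ are genuine eigenvectors of $\LL$ spanning $H_2$, and each eigenvector $f_k$ of $\LL_1$ lifts to $f_k\oplus g_k$ with $g_k=-(f_k,b_1)(\A_2-\lambda_k I)^{-1}a_2$.

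Part (ii) as written, however, contains a real error. You assert that $H_2$ is $\LL^*$-invariant with $\LL^*\vert_{H_2}=\A_2^*$. In fact, since $\LL^*x=\A^*x+(x,a)b$ and $b=b_1\oplus 0$, one has $\LL^*(x_1\oplus x_2)=\bigl(\LL_1^*x_1+(x_2,a_2)b_1\bigr)\oplus\A_2^*x_2$, so it is $H_1\oplus\{0\}$ that is $\LL^*$-invariant (with restriction $\LL_1^*$), while $H_2$ is only coinvariant. Consequently the vectors $0\oplus e_m$ are \emph{not} root vectors of $\LL^*$ unless $(e_m,a_2)=0$, and the family of root vectors of $\LL^*$ you actually exhibit --- the vectors $f_k^*\oplus 0$, for which no lifting is needed precisely because $H_1$ is invariant --- spans only a dense subspace of $H_1\oplus\{0\}$, not of $H$. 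The missing step, which is the substance of the paper's proof of (ii), is to lift each $e_m$ to an eigenvector $h_m\oplus e_m$ of $\LL^*$ at the eigenvalue $\bar s_m$ by solving the first-row equation $(\LL_1^*-\bar s_m I)h_m=-(e_m,a_2)b_1$; this is solvable because either $\bar s_m\in\rho(\LL_1^*)$, or $\bar s_m$ is an eigenvalue of $\LL_1^*$ and one obtains a root vector instead. With these included, the closed span of the root vectors contains $H_1\oplus\{0\}$ (from the $f_k^*\oplus 0$, by completeness of $\LL_1^*$) and hence each $0\oplus e_m=(h_m\oplus e_m)-(h_m\oplus 0)$, so it is all of $H$. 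Two smaller slips: your opening claim that neither coordinate subspace is invariant is wrong ($H_2$ is $\LL$-invariant and $H_1$ is $\LL^*$-invariant), and the set of eigenvalues requiring the lift in (ii) is all of $\{\bar s_m\}_{m\in N_2}$, a countable set, not merely the finitely-or-countably-many points of overlap with $\sigma(\LL_1^*)$.
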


\begin{proof} For $u=u_1\oplus u_2$ we have
$$
\LL u = \big(\A_1 u_1 + (u_1, b_1)a_1\big) \oplus
\big(\A_2 u_2 + (u_1, b_1)a_2\big).
$$
Denote by $(e_m)_{m\in N_1}$ the standard orthogonal basis of $H_2$, 
$(e_m)_n = 0$, $m\ne n$, and $(e_m)_m=1$. It is clear that 
$0\oplus e_m$ is an eigenvector
of $\LL$ corresponding to the eigenvalue $s_m$, $m\in N_2$. 
Denote by $(f_k)$ the eigenvectors of 
$\LL_1$ corresponding to the eigenvalues $\lambda_k$. For simplicity we assume that
all eigenvalues are simple and also that $\{\lambda_k\} 
\cap \{s_m\}_{m\in N_2} = \emptyset$. If $u$ is an eigenvector of $\LL$ and $u_1\ne 0$, 
then  $u_1= f_k$ for some $k$. In this case for $u_2$ we have the equation:
$$
\A_2 u_2 + (u_1, b_1)a_2 = \lambda_k u_2,
$$
whence $u_2 = - (u_1, b_1) (\A_2 - \lambda_k I)^{-1} a_2$. In the case when $\lambda_k
= s_m$ for some $m$ we have a root vector instead of an eigenvector. We omit the details.

Thus, the eigenvectors of $\LL$ are of the form $\{0 \oplus e_m\} 
\cup \{f_k \oplus 
g_k\}$ for some $g_k\in H_2$. It is now obvious that this system is complete in $H$
if and only if the system $\{f_k\}$ is complete in $H_1$.

The statement for the adjoint operator is based 
on similar straightforward computations. Indeed, 
$$
\LL^* u = \big(\A_1^* u_1 + ((u_1, a_1)+(u_2, a_2))b_1\big) \oplus \A_2^* u_2.
$$
Let $f^*_k$ be the eigenvectors of $\LL_1^*$. 
Clearly, $u_1\oplus 0$ is an eigenvector of $\LL^*$
if and only if $u_1 = f_k^*$ for some $k$. If $u_2 \ne 0$
and $\LL^* u =\lambda u$, then $\lambda = \bar s_m$, $u_2 = e_m$ for some $m$
and $u_1$ can be found from the equation 
$(\LL_1^*- \bar s_m I) u_1 = - (e_m, a_2)b_1$. If $s_m \ne \lambda_k$ for any $k$, 
then there exists a unique vector $h_m$ such that $h_m\oplus e_m$ is an eigenvector of 
$\LL^*$ (in the case $s_m = \lambda_k$ there is a root vector). Again, it is obvious
that if the system $\{f_k^*\}$ is complete in $H_1$, then 
the system $\{f_k^* \oplus 0\} \cup\{h_m \oplus e_m \}$  
is complete in $H$. Note that the converse is not so clear.
\end{proof}
\medskip


\subsection{Proof of Theorem \ref{bio1}}
By Lemma \ref{bro} we may assume without loss of generality 
that $b_n\ne 0$ for any $n$. Then we can construct the functional model for $\LL$
in $\mathcal{H}(T,A,\mu)$.
In view of the symmetry of conditions, it is sufficient to prove the theorem for 
the adjoint operator $\LL^*$. In this case the eigenvectors are given by 
reproducing kernels $k_\lambda$, $\lambda \in \mathcal{Z}_G$.

Assume first that \eqref{mom0} holds, that is, the first moment is nonzero. 
If the system $\{k_\lambda\}_{\lambda \in \mathcal{Z}_G}$ is incomplete, 
then there exists a nonzero function $f\in \mathcal{H}(T,A,\mu)$ which vanish on 
$\mathcal{Z}_G$ and so $f=GU$ for some entire function $U$. 
By Lemma \ref{gr2} we have $|G(z)U(z)| = o(|A(z)|)$, $|z|\to \infty$,
outside a zero density set $\Omega_1$. On the other hand,
$$
G(z) = A(z)\bigg( 1+ \sum_n a_n \bar b_n t_n \nu_n + 
\sum_n \frac{a_n\bar b_n t_n^2 \nu_n}{z-t_n} \bigg).
$$
The last sum in brackets can be estimated by \eqref{meas} and we conclude that 
$|G(z)| \gtrsim |A(z)|$, $z\in \CC\setminus \Omega_2$, for some $\Omega_2$ 
of zero density. Hence, $|U(z)| = o(1)$ when $|z|\to \infty$, 
$z\in \CC\setminus \Omega$, for some set $\Omega$ of zero density.

Recall that all elements of $\mathcal{H}(T,A,\mu)$ are of finite order
not exceeding the order of $A$. Hence, $U$ is of finite order. 
By Theorem \ref{dens} $U\equiv 0$.

Now we assume that conditions \eqref{mom1}  are satisfied, i.e., 
the moment with number $N+1$ is the first nonzero moment. Using the elementary formula
$$
\frac{1}{z-t_n} = \frac{1}{z} +\frac{t_n}{z^2}+\dots +\frac{t_n^{m-1}}{z^m}
 +  \frac{t_n^m}{z^m(z-t_n)}
$$
we get 
$$
G(z) = \frac{A(z)}{z^{N-1}} \sum_n \frac{a_n \bar b_n t_n^{N+1} \nu_n}{z-t_n},
$$
whence, by Lemma \ref{verd}, $|G(z)|\gtrsim |z|^{-N}|A(z)|$, $z\notin \Omega$,
for some $\Omega$ of zero density. If $f=GU\in \mathcal{H}(T,A,\mu)$ then,
arguing as above, we conclude that $|U(z)| = o(|z|^N)$
as $|z|\to \infty$ outside a set of zero density
and so $U$ is a polynomial of degree at most $N-1$. Thus, the orthogonal complement to
$\{k_\lambda\}_{\lambda \in \mathcal{Z}_G}$ is contained in $\mathcal{P}_{N-1} G$
and so ${\rm dim}\, (\EE(\LL^*))^\perp \le N$.

It remains to show that if $a\notin ' zL^2(\nu)$, then $\LL^*$ is complete.
Indeed, by Lemma \ref{gin}, $G\notin \mathcal{H}(T,A,\mu)$. 
Therefore $GU \notin \mathcal{H}(T,A,\mu)$ for any polynomial $U$. Thus,
the orthogonal complement to $\{k_\lambda\}_{\lambda \in \mathcal{Z}_G}$ is trivial.
\qed 
\medskip


\subsection{Parametrization of the orthogonal complement 
to a system biorthogonal to a system of reproducing kernels.}
\label{param}
Let $\{k_{\lambda}\}_{\lambda\in \Lambda}$ be a minimal system in 
$\mathcal{H}(T,A,\mu)$. We assume that
$\{\lambda_n\}\cap T =\emptyset$. Let 
$G$ be an entire function which vanishes on $\Lambda$ and such that 
$\frac{G}{z-\lambda} \in \mathcal{H}(T,A,\mu)$. Such function exists due to minimality
of the system $\{k_{\lambda}\}_{\lambda\in \Lambda}$; it is possible that $G\notin 
\mathcal{H}(T,A,\mu)$, but $G\in {\rm Assoc}\,(T,A,\mu)$. 
Then it is clear that 
the system $\big\{ \frac{G(z)}{G'(\lambda)(z-\lambda)}\big\}_{\lambda\in \Lambda}$ 
is biorthogonal to $\{k_{\lambda}\}_{\lambda\in \Lambda}$.
The following parametrization of the orthogonal complement to the
biorthogonal system was suggested in \cite{bb}.

Assume that $h(z) = A(z) \sum_n \frac{c_n\mu_n^{1/2}}{z-t_n} \in \mathcal{H}(T,A,\mu)$
is orthogonal to the system 
$\big\{ \frac{G(z)}{z-\lambda}\big\}_{\lambda\in \Lambda}$.
Note that for $g, h \in \mathcal{H}(T,A,\mu)$ one has
$(g,h) = \sum_n \frac{g(t_n)\overline{h(t_n)}}{|A'(t_n)|^2\mu_n}$ 
whence
$$
\sum_n \frac{G(t_n) \bar c_n}{A'(t_n)\mu_n^{1/2} (t_n - \lambda)} =0, 
\qquad \lambda\in\Lambda.
$$
Therefore, the entire function 
$A(z) \sum_n \frac{G(t_n) \bar c_n}{A'(t_n)\mu_n^{1/2} (z-t_n)}$
vanishes on $\Lambda$ and we can write
\begin{equation}
\label{para1}
A(z) \sum_n \frac{G(t_n) \bar c_n}{A'(t_n)\mu_n^{1/2} (z-t_n)} = G(z)S(z) 
\end{equation}
for some entire function $S$. Note that, conversely, for any entire function
$S$ which satisfies equation \eqref{para1} with some sequence $(c_n)\in \ell^2$,
the function $h(z) = A(z) \sum_n \frac{c_n\mu_n^{1/2}}{z-t_n}$
belongs to $\mathcal{H}(T,A,\mu)$ and
is orthogonal to the system $\big\{ \frac{G(z)}{z-\lambda}\big\}_{\lambda\in \Lambda}$. 
We denote the class of all functions $S$ of the form \eqref{para1} by $\mathcal{S}$.
Note that $\mathcal{S}$ is a linear space. 

Comparing the values at $t_n$ we see that $G(t_n)S(t_n) = G(t_n) \bar c_n \mu_n^{-1/2}$, 
whence $S(t_n) = \bar c_n \mu_n^{-1/2}$ and so $\sum_n |S(t_n)|^2 \mu_n<\infty$.
When equipped with the norm $\|S\|^2 = \sum_n |S(t_n)|^2 \mu_n$
the space $\mathcal{S}$ becomes a Hilbert space
and the mapping 
$$
S\mapsto  A(z) \sum_n  \frac{\overline{S(t_n)} \mu_n}{z-t_n}
$$
is a unitary map from $\mathcal{S}$ onto the orthogonal complement to 
$\big\{ \frac{G(z)}{z-\lambda}\big\}_{\lambda\in \Lambda}$ in  
$\mathcal{H}(T,A,\mu)$.

We will need the following result from \cite{bb}:

\begin{lemma} \cite[Lemma 2.3]{bb}
\label{raz}
If $S\in\mathcal{S}$, then $\frac{S(z)-S(w)}{z-w}\in\mathcal{S}$ for
any $w\in \mathbb{C}$. 
\end{lemma}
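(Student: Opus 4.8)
The plan is to verify directly that $S_w(z):=\dfrac{S(z)-S(w)}{z-w}$ meets the two conditions defining membership in $\mathcal S$: it is an entire function, it satisfies $\sum_n|S_w(t_n)|^2\mu_n<\infty$, and it solves equation \eqref{para1}. Recalling that for $S\in\mathcal S$ one has $\bar c_n=S(t_n)\mu_n^{1/2}$, equation \eqref{para1} for a function $F$ takes the form
\[
G(z)F(z)=A(z)\sum_n\frac{G(t_n)F(t_n)}{A'(t_n)(z-t_n)} ,
\]
the series on the right being absolutely convergent for $z\notin T$ whenever $\sum_n|F(t_n)|^2\mu_n<\infty$. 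Since, for fixed $z$, both sides of this identity with $F=S_w$ depend analytically on $w$, it suffices to carry out the argument for $w\notin T$; the general case then follows by analytic continuation in $w$.

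The $\ell^2$ condition is routine: $S_w$ is entire because $S(z)-S(w)$ vanishes at $z=w$; the finitely many terms of $\sum_n|S_w(t_n)|^2\mu_n$ with $|t_n|$ small are finite (with $S_w(t_n)=S'(w)$ if $t_n=w$), while for the remaining $n$ one has $|t_n-w|\asymp|t_n|\gtrsim1$, so that $|S_w(t_n)|^2\lesssim|t_n|^{-2}(|S(t_n)|^2+|S(w)|^2)$ and
\[
\sum_n|S_w(t_n)|^2\mu_n\ \lesssim\ \sum_n|S(t_n)|^2\mu_n+|S(w)|^2\sum_n\frac{\mu_n}{1+|t_n|^2}\ <\ \infty .
\]

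For the main identity, fix $z\neq w$ with $z,w\notin T$, multiply the equation to be proved (with $F=S_w$) by $z-w$, and use $\dfrac{z-w}{(t_n-w)(z-t_n)}=\dfrac1{z-t_n}+\dfrac1{t_n-w}$, so that its right-hand side becomes
\[
A(z)\sum_n\frac{G(t_n)\bigl(S(t_n)-S(w)\bigr)}{A'(t_n)}\Bigl(\frac1{z-t_n}+\frac1{t_n-w}\Bigr).
\]
Expanding the product: the piece $A(z)\sum_n\dfrac{G(t_n)S(t_n)}{A'(t_n)(z-t_n)}$ equals $G(z)S(z)$ by \eqref{para1} for $S$ itself; the piece $A(z)\sum_n\dfrac{G(t_n)S(t_n)}{A'(t_n)(t_n-w)}$ equals $-\dfrac{G(w)S(w)}{A(w)}A(z)$, by \eqref{para1} evaluated at $z=w$; and, since $G/A$ is a Cauchy transform (cf.\ \eqref{rep1}, whose residue at $t_n$ is $G(t_n)/A'(t_n)$), one has $\sum_n\dfrac{G(t_n)}{A'(t_n)}\bigl(\frac1{z-t_n}+\frac1{t_n-w}\bigr)=\dfrac{G(z)}{A(z)}-\dfrac{G(w)}{A(w)}$. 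Collecting the four terms, the $A(z)G(w)S(w)/A(w)$ contributions cancel and what remains is $G(z)S(z)-S(w)G(z)=G(z)\bigl(S(z)-S(w)\bigr)=(z-w)G(z)S_w(z)$, i.e.\ exactly $z-w$ times the left-hand side. Dividing by $z-w$ and extending the resulting identity by analyticity proves \eqref{para1} for $S_w$, hence $S_w\in\mathcal S$.

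The step I expect to require the most care is the legitimacy of the series manipulations. The bare series $\sum_n\dfrac{G(t_n)}{A'(t_n)(z-t_n)}$ need not converge absolutely, so in the ``second sum'' above one must keep the terms grouped: the combination $\dfrac1{z-t_n}+\dfrac1{t_n-w}=\dfrac1{z-t_n}-\dfrac1{w-t_n}$ is $O(|t_n|^{-2})$, and the residues $G(t_n)/A'(t_n)$ satisfy $\sum_n|G(t_n)/A'(t_n)|\,|t_n|^{-2}<\infty$ (a routine consequence of $G\in\mathrm{Assoc}(T,A,\mu)$ together with $\sum_n\mu_n/(1+|t_n|^2)<\infty$), so that grouped sum converges absolutely and the identity for $G/A$ used above is valid. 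By contrast, the series $\sum_n\dfrac{G(t_n)S(t_n)}{A'(t_n)(z-t_n)}$ \emph{is} absolutely convergent -- this is built into the definition of $\mathcal S$, given $\sum_n|S(t_n)|^2\mu_n<\infty$ -- which is what makes the splitting of the ``first sum'' into two convergent pieces legitimate. Everything else is routine bookkeeping.
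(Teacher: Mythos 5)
Your proof is correct and follows essentially the same route as the paper: both arguments rest on the partial-fraction identity $\frac{z-w}{(t_n-w)(z-t_n)}=\frac{1}{z-t_n}+\frac{1}{t_n-w}$ together with the absolutely convergent grouped Cauchy-transform representations of $GS/A$ and $G/A$ (the paper packages the computation as the two-term decomposition $\frac{G(z)S_w(z)}{A(z)}=\frac{1}{z-w}\bigl(\frac{GS}{A}(z)-\frac{GS}{A}(w)\bigr)+\frac{S(w)}{z-w}\bigl(\frac{G}{A}(w)-\frac{G}{A}(z)\bigr)$, which expands to exactly your four pieces). Your explicit verification of $\sum_n|S_w(t_n)|^2\mu_n<\infty$ and of the absolute convergence of the grouped sums supplies detail the paper leaves implicit in its ``have required representations.''
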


\begin{proof}
Let $S \in \mathcal{S}$ and let $\lambda_0\in \Lambda$. Then we have
$$
\frac{G(z)S(z)}{A(z)} = \sum_n \frac{G(t_n) \bar c_n}{A'(t_n)\mu_n^{1/2} (z-t_n)}, \qquad 
\frac{G(z)}{A(z)} = (z-\lambda_0)\sum_n \frac{G(t_n)}{A'(t_n)(t_n-\lambda_0)(z-t_n)}.
$$
From this it is easy to show that
$$
\frac{1}{z-w}\bigg(\frac{G(z)S(z)}{A(z)} - \frac{G(w)S(w)}{A(w)} \bigg)
\qquad \text{and} \qquad
\frac{S(w)}{z-w} \bigg(\frac{G(w)}{A(w)} - \frac{G(z)}{A(z)} \bigg)
$$
have required representations.
\end{proof}
\medskip


\subsection{Proof of Theorem \ref{bio2}}
\label{prbio1}
We will need the following lemma.
                                        
\begin{lemma}
\label{pol1}
Let $d_n$ be such that
$$
\sum_n \frac{|d_n|}{|t_n|} <\infty \qquad \text{and} 
\qquad \sum_n \frac{|d_n|^2}{|t_n|^{2N}\mu_n}<\infty
$$
for some $N\in \mathbb{N}_0$. Then 
$f(z) = A(z) \sum_n \frac{d_n}{z-t_n} 
\in \mathcal{P}_N \mathcal{H}(T,A,\mu)$.
\end{lemma}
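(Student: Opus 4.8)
The plan is to peel off the $N$ powers of $z$ by hand: I will produce a single function $g\in\mathcal{H}(T,A,\mu)$ for which $z^Ng-f$ is a polynomial multiple of $A$, and then use that $A\in {\rm Assoc}\,(T,A,\mu)=\mathcal{P}_1\mathcal{H}(T,A,\mu)$ to absorb that term into $\mathcal{P}_N\mathcal{H}(T,A,\mu)$.

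First I would set $e_n:=d_n t_n^{-N}$; this is legitimate since $A(0)=1$ forces $0\notin T$, so $t_n\ne 0$. The second hypothesis says exactly that $\sum_n |e_n|^2\mu_n^{-1}<\infty$, i.e. $\{e_n\mu_n^{-1/2}\}\in\ell^2$, so by the very definition of the space the series below converges and
\[
g(z):=A(z)\sum_n \frac{e_n}{z-t_n}=A(z)\sum_n \frac{(e_n\mu_n^{-1/2})\,\mu_n^{1/2}}{z-t_n}\in\mathcal{H}(T,A,\mu).
\]
Next I would invoke the elementary identity
\[
\frac{z^N}{t_n^N(z-t_n)}=\frac{1}{z-t_n}+\sum_{j=0}^{N-1}\frac{z^j}{t_n^{\,j+1}},
\]
which comes from $\frac{z^N-t_n^N}{z-t_n}=\sum_{j=0}^{N-1}z^j t_n^{N-1-j}$. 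Multiplying by $d_n$, summing over $n$, and then multiplying by $A(z)$ gives
\[
z^N g(z)=f(z)+A(z)\sum_{j=0}^{N-1}c_j z^j,\qquad c_j:=\sum_n\frac{d_n}{t_n^{\,j+1}},
\]
so that $f(z)=z^N g(z)-\sum_{j=0}^{N-1}c_j\,z^j A(z)$. Finally, $g\in\mathcal{H}(T,A,\mu)$ gives $z^N g\in\mathcal{P}_N\mathcal{H}(T,A,\mu)$, and $A\in\mathcal{P}_1\mathcal{H}(T,A,\mu)$ gives $z^j A\in\mathcal{P}_{j+1}\mathcal{H}(T,A,\mu)\subseteq\mathcal{P}_N\mathcal{H}(T,A,\mu)$ for $j\le N-1$; since $\mathcal{P}_N\mathcal{H}(T,A,\mu)$ is a linear space, $f\in\mathcal{P}_N\mathcal{H}(T,A,\mu)$. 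The case $N=0$ is the trivial one: the $j$-sum is empty and $f=g\in\mathcal{H}(T,A,\mu)$.

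The steps that need care — none being a genuine obstacle — are the convergence of the scalars $c_j$ and the legitimacy of the termwise rearrangement of the double sum; both follow from the first hypothesis $\sum_n |d_n|/|t_n|<\infty$, because all but finitely many $|t_n|$ exceed $1$, so $\sum_n |d_n|/|t_n|^{\,j+1}<\infty$ for every $j\ge 0$. One should also record explicitly the bookkeeping fact that multiplication by $z^j$ maps $\mathcal{P}_k\mathcal{H}(T,A,\mu)$ into $\mathcal{P}_{k+j}\mathcal{H}(T,A,\mu)$, which is immediate from the definition of these spaces. (Alternatively, the whole statement can be obtained by induction on $N$, peeling off one power of $z$ at a time via $\frac{z}{t_n(z-t_n)}=\frac{1}{t_n}+\frac{1}{z-t_n}$ and using $A\in\mathcal{P}_1\mathcal{H}(T,A,\mu)$ at each step.)
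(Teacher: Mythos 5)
Your proof is correct and is essentially the paper's own argument: the paper uses the identity $\frac{1}{z-t_n} = -\frac{1}{t_n}-\cdots-\frac{z^{N-1}}{t_n^N}+\frac{z^N}{t_n^N(z-t_n)}$ (your identity rearranged) to write $f = AP + z^N A\sum_n \frac{d_n}{t_n^N(z-t_n)}$ with $P\in\mathcal{P}_{N-1}$, notes the second sum is in $\mathcal{H}(T,A,\mu)$ by the second hypothesis, and absorbs $AP$ via $A\in{\rm Assoc}\,(T,A,\mu)$. Your write-up just makes explicit the convergence and bookkeeping details that the paper leaves implicit.
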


\begin{proof}
Since 
\begin{equation}
\label{srt1}
\frac{1}{z-t_n} = - \frac{1}{t_n} -\frac{z}{t_n^2}-\dots -\frac{z^{N-1}}{t_n^N}
 +  \frac{z^N}{t_n^N (z-t_n)},
\end{equation}
we have
$$
f(z) = A(z)P(z) + z^N A(z)\sum_n \frac{d_n}{t_n^N (z-t_n)},
$$
where $P$ is a polynomial of degree at most $N-1$.
By the hypothesis $A(z)\sum_n \frac{d_n}{t_n^N (z-t_n)} \in \mathcal{H}(T,A,\mu)$
Also, $A\in  {\rm Assoc}\,(T, A, \mu)$ and so $AP\in \mathcal{P}_N \mathcal{H}(T,A,\mu)$.
\end{proof}
\medskip
\begin{proof}[Proof of Theorem \ref{bio2}]
By Lemma \ref{bro} we may assume that $a_n, b_n \ne 0$ for any $n$. 
Note that $\LL^* = \A^* + b\otimes a$ also is a rank one perturbation of
a normal operator. By the symmetry, we can prove the following 
statement which is equivalent to Theorem \ref{bio2}: 
\medskip

{\it If $\LL^*$ is complete and 
$$
\sum_n \frac{|a_n|^2}{|b_n|^2 |t_n|^{2N}} <\infty,
$$
then ${\rm dim}\, (\EE(\LL))^\perp \le N$. If, moreover,
$b\notin zL^2(\nu)$, then $\LL$ is complete.}
\medskip

Consider the functional model for $\LL$ 
in the Cauchy--de Branges space $\mathcal{H} = \mathcal{H}(T, A, \mu)$.
Recall that the eigenvectors of $\LL$ are of the form $\frac{G}{z-\lambda}$, 
$\lambda\in \mathcal{Z}_G$, where
$$
G(z) = A(z)\bigg(1+ z\sum_n \frac{a_n \bar b_n  t_n \nu_n}{z-t_n}\bigg),
$$
while the eigenfunctions of $\LL^*$ are the reproducing kernels $k_\lambda$, 
$\lambda\in \mathcal{Z}_G$.

By the discussion in the Subsection \ref{param}, if $f(z) = A(z)
\sum_n \frac{c_n \mu_n^{1/2}}{z- t_n}$ is orthogonal to 
$\big\{\frac{G}{z-\lambda}\big\}_{\lambda\in \mathcal{Z}_G}$, 
then there exists an entire function $S$ from the 
corresponding space $\mathcal{S}$ such that
$$
G(z)S(z) = A(z) \sum_n \frac{G(t_n) \bar c_n}{A'(t_n) 
\mu_n^{1/2}(z-t_n)} = 
A(z) \sum_n \frac{d_n}{z-t_n}.
$$
Note that the coefficients $d_n$ satisfy
\begin{equation}
\label{d1}
|d_n| = \bigg| 
\frac{G(t_n)c_n}{A'(t_n) \mu_n^{1/2}}\bigg| =  
\bigg|\frac{a_n b_n t_n^2 \nu_n c_n}{\mu_n^{1/2}}\bigg| = 
|a_n t_n \nu_n^{1/2}c_n|.
\end{equation}
Hence,                             
\begin{equation}
\label{d2}
\sum_n \frac{|d_n|^2}{|t_n|^{2N} \mu_n} \lesssim 
\sum_n \frac{|a_n|^2}{|b_n|^2 |t_n|^{2N}} <\infty,
\end{equation}
and $GS\in \mathcal{P}_N \mathcal{H}(T, A, \mu)$ 
by Lemma \ref{pol1}.

If $S$ has at least $N$ zeros $z_1, \dots z_N$ counting multiplicities,
then 
$$
(z-z_1)^{-1}\dots(z-z_N)^{-1} G(z)S(z) 
\in \mathcal{H}(T, A, \mu).
$$ 
This contradicts the fact that
$\LL$ is complete and so the set $\mathcal{Z}_G$ is a uniqueness set for 
$\mathcal{H}(T,A, \mu)$. 
Thus, $S = Q_1 e^{Q_2}$ where $Q_1$ is a polynomial of degree 
at most $N-1$ and $Q_2$ is  some entire function. 

By Lemma \ref{raz}, $\frac{S(z) - S(w)}{z-w} \in \mathcal{S}$ 
for any $S\in \mathcal{S}$.
If $S = Q_1 e^{Q_2}$ and $Q_2\ne const$, then  there exists $w\in \mathbb{C}$
such that $\frac{S(z) - S(w)}{z-w}$ have infinitely many zeros 
and, repeating the above argument, we again come to a contradiction. 
We conclude that 
$\mathcal{S} \subset \mathcal{P}_{N-1}$ and so
${\rm dim}\, (\EE(\LL))^\perp \le N$. 

Note that $S(t_n) = \bar c_n \mu_n^{-1/2}$ and so
$$
\sum_n |S(t_n)|^2 |b_n|^2 |t_n|^2 \nu_n <\infty.
$$
If $S$ is a polynomial and 
$b\notin zL^2(\nu)$, then $S\equiv 0$ and so $\LL$ is complete. 
\end{proof}

\begin{remark}
{\rm For the case of selfadjoint operators a result similar to Theorem \ref{bio2}
was proved in \cite{by} with pointwise (in place of integral) domination 
and for $T$ with finite convergence exponent. It is obvious that in this case
each of the conditions
$|a_n|^2 \nu_n \gtrsim |t_n|^{-N}$ or 
$|b_n| \lesssim |t_n|^N|a_n|$ of \cite{by} implies condition \eqref{dom}.  }
\end{remark}
\bigskip


\section{Spectral synthesis}
\label{synte}

\subsection{Hereditary complete systems.}
The spectral synthesis for an operator is equivalent to a certain
``strong completeness'' property of its root vectors.
Let $\{x_n\}_{n\in N}$ be a complete and minimal system 
in a separable Hilbert space $H$ 
and let $\{\tilde x_n\}_{n\in N}$ be its biorthogonal system. 
The system $\{x_n\}$ is said to be {\it hereditarily complete} 
(or to be a {\it strong Markushevich basis}, or to admit {\it spectral synthesis})  
if $x\in \ospan \{(x,\tilde x_n) x_n\}$ for any $x\in H$. In other words,
any $x$ can be approximated by partial sums of its Fourier series
with respect to the biorthogonal pair $\{x_n\}$, $\{\tilde x_n\}$.

An equivalent definition of a hereditarily complete system is that
for any partition $N = N_1 \cup N_2$, $N_1 \cap N_2 =\emptyset$, of the index set $N$,
the {\it mixed system}
$$
\{x_n\}_{n\in N_1} \cup \{\tilde x_n\}_{n\in N_2}
$$
is complete in $H$. 

By a theorem of A. Markus \cite[Theorem 4.1]{markus}, a compact operator with complete set 
of root vectors $\{x_n\}$ admit the spectral synthesis if and only if 
the system $\{x_n\}$ is hereditarily complete. For 
a survey  of hereditary completeness and its relations to operator theory 
we refer to \cite[Chapter 4]{hrnik} (see also \cite{bbb1} and references therein).
\medskip


\subsection{Hereditary completeness for systems of reproducing kernels.}
\label{her1}
Now let $\LL$ be a rank one perturbation of a compact normal operator and let 
$\mathcal{T}_G$ be its functional model in a space $\mathcal{H}(T,A,\mu)$. 
Now the possibility of spectral synthesis for $\LL$ reduces to the hereditary 
completeness of the system 
$\big\{ \frac{G}{z-\lambda}\big\}_{\lambda\in \mathcal{Z}_G}$
(equivalently, $\{k_{\lambda}\}_{\lambda\in \mathcal{Z}_G}$),
that is, completeness of all mixed systems. A method for the study 
of hereditary completeness of systems  of reproducing kernels in de Branges spaces
was developed and successfully applied in \cite{bbb, bbb1}. In particular,
in \cite{bbb} a long-standing problem of the spectral synthesis for exponential systems
was solved. In \cite{by} these results were used to study spectral synthesis for 
rank one perturbations of compact selfadjoint operators.

We will see that these methods apply to the Cauchy--de Branges spaces as well. Let
$\{k_{\lambda}\}_{\lambda\in \Lambda}$ be a complete and minimal system of reproducing 
kernels and let $\big\{ \frac{G(z)}{z-\lambda}\big\}_{\lambda\in \Lambda}$ 
be its biorthogonal system. Here $G$ is the unique (up to multiplication by a constant) 
function in ${\rm Assoc}\,(T, A, \mu)$ such that $\mathcal{Z}_{G} = \Lambda$.
For the partition $\Lambda = \Lambda_1\cup \Lambda_2$, 
$\Lambda_1\cap \Lambda_2 = \emptyset$, consider the corresponding mixed system
\begin{equation}
\label{mix}
\mathcal{K}(\Lambda_1, \Lambda_2) :=
\{k_{\lambda}\}_{\lambda\in \Lambda_1}
\cup 
\Big\{ \frac{G}{z-\lambda}\Big\}_{\lambda\in \Lambda_2}. 
\end{equation}
We assume that $\Lambda\cap T = \emptyset$. This is not a restriction since 
both properties of being hereditarily complete or to be a Riesz basis of reproducing 
kernels in $\mathcal{H}(T,A,\mu)$ are stable under small perturbations of points.

One can parametrize the orthogonal complement to the system \eqref{mix} similarly to 
Subsection \ref{param}. Choose two functions $G_1$ and $G_2$
such that $G=G_1G_2$, $\mathcal{Z}_{G_1} = \Lambda_1$, $\mathcal{Z}_{G_2} = \Lambda_2$.
Assume that $f(z) = A(z)\sum_n \frac{c_n\mu_n^{1/2}}{z-t_n} \in \mathcal{H}(T,A,\mu)$
is orthogonal to the system \eqref{mix}. 
The fact that 
$f\perp \{k_{\lambda}\}_{\lambda\in \Lambda_1}$ is equivalent to $f=G_1S_1$ 
for some entire function $S_1$. As in Subsection \ref{param}, 
the orthogonality $f\perp \big\{ \frac{G(z)}{z-\lambda}\big\}_{\lambda\in \Lambda_2}$
can be rewritten as 
$$
\sum_n \frac{G(t_n) \bar c_n}{A'(t_n)\mu_n^{1/2} (t_n - \lambda)} =0, 
\qquad \lambda\in\Lambda_2,
$$
and so the entire function $A(z) \sum_n \frac{G(t_n) 
\bar c_n}{A'(t_n)\mu_n^{1/2} (z-t_n)}$ is divisible by $G_2$, 

We conclude that
$f(z) = A(z)\sum_n \frac{c_n\mu_n^{1/2}}{z-t_n} \in \mathcal{H}(T,A,\mu)$
is orthogonal to the system \eqref{mix} if and only if there exist two entire functions
$S_1, S_2$ such that we have two interpolation formulas:
\begin{equation}
\label{para2}
\begin{aligned}
& A(z)\sum_n \frac{c_n\mu_n^{1/2}}{z-t_n}  = G_1(z)S_1(z), \\ 
& A(z) \sum_n \frac{G(t_n) \bar c_n}{A'(t_n)\mu_n^{1/2} (z-t_n)}  = G_2(z)S_2(z).
\end{aligned}
\end{equation}
Conversely, if there exist two entire functions $S_1, S_2$ satisfying \eqref{para2}
for some $(c_n) \in \ell^2$, then 
$f(z) = A(z)\sum_n \frac{c_n\mu_n^{1/2}}{z-t_n}$ is orthogonal to the system \eqref{mix}. 
We denote by $\mathcal{S}_{12}$ the set of all pairs $(S_1, S_2)$ satisfying \eqref{mix}, 
this set parametrizes the orthogonal complement to \eqref{mix}. Note that
if $(S_1, S_2)$ and $(\tilde S_1, \tilde S_2)$ are in $\mathcal{S}_{12}$, then 
$(S_1+ \tilde S_1, S_2 +  \tilde S_2)\in  \mathcal{S}_{12}$. However, $\mathcal{S}_{12}$
is not a linear space: for 
$(S_1, S_2) \in \mathcal{S}_{12}$ and $\alpha\in\CC$, 
we have $(\alpha S_1, \bar \alpha S_2) \in \mathcal{S}_{12}$.

Comparing the values at $t_n$, we get
$$
A'(t_n) c_n \mu_n^{1/2} = G_1(t_n)S_1(t_n), \qquad G(t_n)\bar c_n \mu_n^{-1/2} = 
G_2(t_n)S_2(t_n),
$$
whence
$$
S_1(t_n)S_2(t_n) = |c_n|^2 A'(t_n).
$$
Hence, if we put $S=S_1S_2$, we see that the entire functions $S$ 
and $A \sum_n \frac{|c_n|^2}{z-t_n}$ coincide on $T$. Thus, there exists
an entire function $R$ such that
$$
S(z) = A(z) \bigg(\sum_n \frac{|c_n|^2}{z-t_n} + R(z)\bigg).
$$
This representation will play the key role in 
the proofs of Theorems \ref{syn1}--\ref{syn3}. Note that in the case when 
$A$ is of finite order all functions in the space  
$\mathcal{H}(T, A, \mu)$  (for any admissible measure $\mu$) 
are of finite order by Lemma \ref{gr1}. 
In particular, $G$, $G_1S_1$, $G_2 S_2$ are of
finite order and so $R$ is of finite order.

The following observation also will be useful. Let 
$(\tilde S_1, \tilde S_2)$ be another element of $\mathcal{S}_{12}$ 
corresponding to a function 
$g(z) =A(z)\sum_n \frac{d_n\mu_n^{1/2}}{z-t_n}$ orthogonal to
$\mathcal{K}(\Lambda_1, \Lambda_2)$. Then, analogously,
$$
A'(t_n) d_n \mu_n^{1/2} = G_1(t_n)\tilde S_1(t_n), \qquad G(t_n)\bar d_n 
\mu_n^{-1/2} = G_2(t_n)\tilde S_2(t_n),
$$
and so $S_1(t_n)\tilde S_2(t_n) = c_n \bar d_n A'(t_n)$, 
$\tilde S_1(t_n) S_2(t_n) = \bar c_n d_n A'(t_n)$, whence
$$
\frac{S_1(z)\tilde S_2(z)}{A(z)} =  \sum_n \frac{c_n \bar d_n}{z-t_n} + U(z),
\qquad
\frac{\tilde S_1(z) S_2(z)}{A(z)} =  \sum_n \frac{\bar c_n d_n}{z-t_n} + V(z),
$$
for some entire functions $U$ and $V$.


\subsection{Proof of Theorem \ref{syn1}}
Assume that $b_n \ne 0$ for any $n$ and $b\in zL^2(\nu)$.
Then for $\mu$ defined by $\mu_n = |b_n|^2|t_n|^2 \nu_n$ we have
$\sum_n \mu_n <\infty$.

Let $\mathcal{T}_G$ be the model operator in $\mathcal{H}(T, A, \mu)$
unitarily equivalent to $\LL$. We need to show that $\mathcal{T}_G$  
admits the spectral synthesis, i.e., that any mixed system
$\mathcal{K}(\Lambda_1, \Lambda_2)$ 
(where $\Lambda_1\cup\Lambda_2 = \Lambda = \mathcal{Z}_G$)
is complete in $\mathcal{H}(T, A, \mu)$. 
Note that the systems 
$\{k_{\lambda}\}_{\lambda\in \Lambda}$ and 
$\big\{ \frac{G(z)}{z-\lambda}\Big\}_{\lambda\in \Lambda}$ 
are complete by Theorem \ref{bio1}.

As in Theorem \ref{bio1}, we have 
$$
G(z) = A(z)\bigg( 1+ \sum_n a_n \bar b_n t_n \nu_n + 
\sum_n \frac{a_n\bar b_n t_n^2 \nu_n}{z-t_n} \bigg),
$$
whence, by \eqref{meas}, 
$|G(z)| \asymp |A(z)|$, $z\in \CC\setminus \Omega_1$ for some $\Omega_1$ 
of zero density. Assume that 
$f(z) = A(z)\sum_n \frac{c_n\mu_n^{1/2}}{z-t_n} 
\in \mathcal{H}(T, A, \mu)$ is orthogonal to 
$\mathcal{K}(\Lambda_1, \Lambda_2)$
and let $(S_1, S_2) \in \mathcal{S}_{12}$ be the corresponding entire functions 
for which \eqref{para2} holds. Multiplying the equations in  \eqref{para2} we get
\begin{equation}
\label{mup}
\frac{G(z)}{A(z)} \bigg( \sum_n \frac{|c_n|^2}{z-t_n} + R(z) \bigg) = 
\bigg(\sum_n \frac{c_n\mu_n^{1/2}}{z-t_n}\bigg) \cdot
\bigg(\sum_n \frac{G(t_n) \bar c_n}{A'(t_n)\mu_n^{1/2} (z-t_n)}\bigg), 
\end{equation}
which can be rewritten as 
\begin{equation}
\label{mup1}
\frac{G(z)}{A(z)}R(z)  =  
\bigg(\sum_n \frac{c_n\mu_n^{1/2}}{z-t_n}\bigg) \cdot
\bigg(\sum_n \frac{G(t_n) \bar c_n}{A'(t_n)\mu_n^{1/2} (z-t_n)}\bigg)
- \frac{G(z)}{A(z)}  \sum_n \frac{|c_n|^2}{z-t_n}.
\end{equation}
By \eqref{meas}, the right-hand side in \eqref{mup1}
is $o(1)$ as $|z|\to\infty$ 
outside a set of zero density. 
Hence, $|R(z)| = o(1)$, $|z|\to \infty$, $z\notin \Omega_2$, $\Omega_2$ 
of zero density. Since $R$ is of finite order (see Subsection \ref{her1}), 
$R\equiv 0$ by Theorem \ref{dens}.  Thus, we have
\begin{equation}
\label{srt}
\frac{G(z)}{A(z)} \sum_n \frac{|c_n|^2}{z-t_n}  = 
\bigg(\sum_n \frac{c_n\mu_n^{1/2}}{z-t_n}\bigg) \cdot
\bigg(\sum_n \frac{G(t_n) \bar c_n}{A'(t_n)\mu_n^{1/2} (z-t_n)}\bigg).
\end{equation}
By Lemma \ref{verd} and the fact that
$|G(z)| \asymp |A(z)|$ outside a set of zero density, the modulus of the 
left-hand side is $\gtrsim |z|^{-1}$ outside a set
of zero density. The second term in the right-hand side of \eqref{srt}
is $o(1)$ as $|z|\to\infty$ 
outside a set of zero density. Finally, note that $(c_n \mu_n^{1/2}) \in \ell^1$
since $\sum_n \mu_n<\infty$. Hence, by Lemma \ref{verd},
$$
\sum_n \bigg|\frac{c_n\mu_n^{1/2}}{z-t_n}\bigg| \lesssim \frac{1}{|z|} 
$$
outside a set of zero density. Thus, the right-hand side 
of \eqref{srt} is $o(|z|^{-1})$ as $|z|\to\infty$ 
outside a set of zero density, a contradiction.
\qed
\medskip


\subsection{Proof of Theorem \ref{syn2}}

We will prove the equivalent result for the model operator
$\mathcal{T}_G$ in $\mathcal{H}(T, A, \mu)$. 
Let $\MM$ be $\mathcal{T}_G$ invariant and let 
$\EE(\MM, \mathcal{T}_G) = \ospan \big\{\frac{G}{z-\lambda}: 
\ \lambda\in \Lambda_2 \big\}$, where $\Lambda_2\subset \Lambda = \mathcal{Z}_G$. 
Then it is not difficult to show that  
$\MM^\perp \supset \{k_\lambda:\ \lambda\in \Lambda_1 = 
\Lambda\setminus\Lambda_2\}$ (see, e.g., the proof of Lemma 4.2 in 
\cite{markus}). Hence, 
$$
\ospan \Big\{\frac{G}{z-\lambda}: 
\ \lambda\in \Lambda_2 \Big\} \subset \MM \subset 
\big( \ospan \{k_\lambda:\ \lambda\in \Lambda_1\}\big)^\perp.
$$ 
Thus, ${\rm dim}\, \mathcal{M} \ominus \EE(\MM, \mathcal{T}_G)$
does not exceed the dimension of the orthogonal complement to the mixed system
$\mathcal{K}(\Lambda_1, \Lambda_2) =
\{k_{\lambda}\}_{\lambda\in \Lambda_1} \cup 
\big\{ \frac{G(z)}{z-\lambda}\Big\}_{\lambda\in \Lambda_2}$.
It remains to show that this dimension admits an estimate
depending on $N$ only. 
\medskip
\\
{\bf Step 1.} Let $f(z) = A(z)\sum_n \frac{c_n\mu_n^{1/2}}{z-t_n} 
\in \mathcal{H}(T, A, \mu)$ be orthogonal to $\mathcal{K}(\Lambda_1, \Lambda_2)$
and let $(S_1, S_2) \in \mathcal{S}_{12}$ be the corresponding entire functions 
for which \eqref{para2} holds. As in the proof of Theorem \ref{syn1},
multiplying the equations in  \eqref{para2}
we get \eqref{mup}--\eqref{mup1}.
By \eqref{meas}, the right-hand side in \eqref{mup1}
is $o(1)$ as $|z|\to\infty$, 
$z\notin\Omega_1$ for some set $\Omega_1$ of zero density. 
We use the fact that $|G|\lesssim |A|$ outside a set of zero density. 

We consider the case $N\ge 1$, the case $N= 0 $ is analogous.
As in the proof of Theorem \ref{bio1}, 
$$
G(z) = \frac{A(z)}{z^{N-1}} \sum_n \frac{a_n \bar b_n t_n^{N+1} \nu_n}{z-t_n},
$$
whence, by Lemma \ref{verd}, $|G(z)|\gtrsim |z|^{-N}|A(z)|$, 
$z\notin \Omega_2$,
for some $\Omega_2$ of zero density. Thus, $|R(z)| = o(|z|^N)$
as $|z|\to\infty$, $z\notin\Omega$, $\Omega$ of zero density. 
Recall that $R$ is of finite order.  Then, 
by Theorem \ref{dens}, $R$ is a polynomial of degree at most $N-1$.
\medskip
\\
{\bf Step 2.} Let $f(z) = A(z)\sum_n \frac{c_n\mu_n^{1/2}}{z-t_n}$ and $g(z)
=A(z)\sum_n \frac{d_n\mu_n^{1/2}}{z-t_n}$ be two mutually orthogonal functions 
in $(\mathcal{K}(\Lambda_1, \Lambda_2))^\perp$ and let corresponding
elements $(S_1, S_2)$ and $(\tilde S_1, \tilde S_2)$  of $\mathcal{S}_{12}$ 
satisfy
$$
\frac{S_1(z)S_2(z)}{A(z)} =  \sum_n \frac{|c_n|^2}{z-t_n} + R(z), \qquad 
\frac{\tilde S_1(z)\tilde S_2(z)}{A(z)} = 
 \sum_n \frac{|d_n|^2}{z-t_n} + \tilde R(z),
$$
$$
\frac{S_1(z)\tilde S_2(z)}{A(z)} =  \sum_n \frac{c_n \bar d_n}{z-t_n},
\qquad
\frac{\tilde S_1(z) S_2(z)}{A(z)} = \sum_n \frac{\bar c_n d_n}{z-t_n},
$$
i.e. in the ``cross-products'' $S_1 \tilde S_2$ 
and $\tilde S_1 S_2$ there are no polynomial terms.
Since $f$ and $g$ are orthogonal in $\mathcal{H}(T, A, \mu)$,
we have $\sum_n c_n \bar d_n =0$. Hence, by Lemma \ref{verd}, 
$$
\frac{S_1(z)\tilde S_2(z)}{A(z)}  = o\Big(\frac{1}{|z|}\Big),
\qquad
\frac{\tilde S_1(z) S_2(z)}{A(z)}  = o\Big(\frac{1}{|z|}\Big), \qquad 
|z|\to \infty, \ \ z\notin \Omega_1,
$$
where $\Omega_1$ is a set of zero density. On the other hand, by the same 
Lemma \ref{verd}, 
$$
\bigg| \frac{S_1(z) S_2(z)}{A(z)}\bigg|   \gtrsim \frac{1}{|z|},
\qquad
\bigg|  \frac{\tilde S_1(z) \tilde S_2(z)}{A(z)} \bigg|   
\gtrsim \frac{1}{|z|}, \qquad 
z\notin \Omega_2,
$$
for a set $\Omega_2$ of zero density. Note that, e.g.,  
$\frac{S_1(z) S_2(z)}{A(z)}$ has even a larger estimate from 
below if $R$ is a nonzero polynomial. Combined together, these estimates obviously 
lead to a contradiction. 
\medskip
\\
{\bf Step 3.}          
In what follows we will denote by $\mathcal{C}$ 
the set of all functions of the form $\sum_n \frac{d_n}{z-t_n}$, 
where $(d_n) \in \ell^1$.
                                                                            
Assume that ${\rm dim}\, \big(\mathcal{K}(\Lambda_1, \Lambda_2) \big)^\perp > (N+1)^2$
and choose in $\big(\mathcal{K}(\Lambda_1, \Lambda_2) \big)^\perp$
an orthogonal system $\{f_j\}_{j=1}^m \cup \{g_k\}_{k=1}^{N+1}$
where $m> N(N+1)$. Let $(S_1^j, S_2^j)$ 
and $(\tilde S_1^k, \tilde S_2^k)$ be the elements of $\mathcal{S}_{12}$
corresponding to $f_j$ and $g_k$, respectively (see Subsection \ref{her1}). If 
$$
f_j(z) = A(z)\sum_n \frac{c_n^j \mu_n^{1/2}}{z-t_n}, \qquad
g_k(z) = A(z)\sum_n \frac{d_n^k \mu_n^{1/2}}{z-t_n},
$$
then there exist polynomials $U_{jk}$ of degree at most $N-1$ 
such that 
$$
\frac{S_1^j(z)\tilde S_2^k(z)}{A(z)} = \sum_n \frac{c_n^j \bar d_n^k}{z-t_n}
+ U_{jk}(z).
$$
Since $m> (N+1){\rm dim}\, \mathcal{P}_{N-1}$, there exist $\{\alpha_j\}_{j=1}^m$
such that $\sum_{j=1}^m \alpha_j U_{jk} \equiv 0$ for any $k=1, \dots N+1$.
Put $f= \sum_{j=1}^m \alpha_j f_j$ and let  $(S_1, S_2)$ be the corresponding element 
of $\mathcal{S}_{12}$. Then we have $\frac{S_1\tilde S_2^k}{A} \in \mathcal{C}$
for any $k$. On the other hand, there exist polynomials 
$V_k \in \mathcal{P}_{N-1}$ such that 
$$
\frac{\tilde S_1^k S_2}{A} - V_k \in \mathcal{C}.
$$
Choose $\{\beta_k\}_{k=1}^{N+1}$ such that $\sum_{k=1}^{N+1} \beta_k V_k \equiv 1$
and put $g=\sum_{k=1}^{N+1} \beta_k g_k$. If we denote by $(\tilde S_1, 
\tilde S_2)$
the element of $\mathcal{S}_{12}$ corresponding to $g$, then we have
both $\frac{S_1 \tilde S_2}{A} \in \mathcal{C}$ and
$\frac{\tilde S_1 S_2}{A} \in \mathcal{C}$, and, moreover, $(f,g) = 0$. 
As we have already seen in Step 2, this leads to a contradiction.
This shows that
$$
{\rm dim}\, \mathcal{M} \ominus \EE(\MM, \mathcal{T}_G) = 
{\rm dim}\, \big(\mathcal{K}(\Lambda_1, \Lambda_2) \big)^\perp \le (N+1)^2.
$$
Theorem \ref{syn2} is proved. 
\qed
\medskip

\subsection{Proof of Theorem \ref{syn3}}
We will need the following technical lemma. 

\begin{lemma}
\label{pol2}
Let $T=\{t_n\}$ be power separated with power $M$, i.e., $T$
satisfies \eqref{pow}. Let $d_n^{(j)}$, $j=1, \dots 4$, be such that 
$d_n^{(1)} d_n^{(2)} = d_n^{(3)}d_n^{(4)}$ for any $n$ and, 
for some $N\in \mathbb{N}_0$, 
$$
\sum_n \frac{|d_n^{(j)}|}{|t_n|} <\infty, \quad j=1,
\dots, 4,  \qquad \text{and} 
\qquad \sum_n \frac{|d_n^{(j)}|^2}{|t_n|^{2N}\mu_n}<\infty, \quad j=1,3.
$$
Then 
$$
f(z) = A(z) \bigg( \sum_n \frac{d_n^{(1)}}{z-t_n}  \cdot
\sum_n \frac{d_n^{(2)}}{z-t_n}  - 
\sum_n \frac{d_n^{(3)}}{z-t_n}  \cdot
\sum_n \frac{d_n^{(4)}}{z-t_n} \bigg)
$$
belongs to $\mathcal{P}_{M+N} \mathcal{H}(T,A,\mu)$.
\end{lemma}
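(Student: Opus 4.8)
The plan is to check first that $f$ is entire, then to identify $f$ with the Cauchy transform $A(z)\sum_k\frac{e_k}{z-t_k}$ built from the residues of $C_1C_2-C_3C_4$ at the points $t_k$, and finally to invoke Lemma \ref{pol1} with $N$ replaced by $M+N$.

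First I would put $C_j(z)=\sum_n\frac{d_n^{(j)}}{z-t_n}$, so that $f=A\cdot(C_1C_2-C_3C_4)$. Writing $C_j(z)=\frac{d_k^{(j)}}{z-t_k}+\widetilde C_j^{(k)}(z)$ near $t_k$, with $\widetilde C_j^{(k)}(t_k)=\sum_{n\ne k}\frac{d_n^{(j)}}{t_k-t_n}$ (a series converging absolutely, since power separation controls the near terms and $\sum_n|d_n^{(j)}|/|t_n|<\infty$ the far ones), the principal part of $C_1C_2-C_3C_4$ at $t_k$ is $\frac{d_k^{(1)}d_k^{(2)}-d_k^{(3)}d_k^{(4)}}{(z-t_k)^2}+\frac{e_k}{z-t_k}$. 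The hypothesis $d_n^{(1)}d_n^{(2)}=d_n^{(3)}d_n^{(4)}$ kills the double pole, so $f$ is entire, with $f(t_k)=A'(t_k)e_k$ and $e_k=d_k^{(1)}\widetilde C_2^{(k)}(t_k)+d_k^{(2)}\widetilde C_1^{(k)}(t_k)-d_k^{(3)}\widetilde C_4^{(k)}(t_k)-d_k^{(4)}\widetilde C_3^{(k)}(t_k)$. The same cancellation is visible from $f=C_2\cdot(AC_1)-C_4\cdot(AC_3)=\sum_n d_n^{(2)}\frac{(AC_1)(z)-(AC_1)(t_n)}{z-t_n}-\sum_n d_n^{(4)}\frac{(AC_3)(z)-(AC_3)(t_n)}{z-t_n}$, where the residue terms cancel because $d_n^{(2)}(AC_1)(t_n)=A'(t_n)d_n^{(1)}d_n^{(2)}=A'(t_n)d_n^{(3)}d_n^{(4)}=d_n^{(4)}(AC_3)(t_n)$.

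Next I would show $f=A(z)\sum_k\frac{e_k}{z-t_k}$. Once $\sum_k|e_k|/|t_k|<\infty$ is known, $D:=f/A-\sum_k\frac{e_k}{z-t_k}$ is a well-defined entire function that tends to $0$ off a set of zero area density by \eqref{meas}; it has finite order because $AC_1,AC_3\in\mathcal{P}_N\mathcal{H}(T,A,\mu)$ by Lemma \ref{pol1} (hence finite order by Lemma \ref{gr1}), while $C_2,C_4$ grow at most polynomially off $\bigcup_n D(t_n,|t_n|^{-M-2})$ — a set of zero area density, since a power-separated $T$ has a polynomially bounded counting function — and the maximum principle on the discs $D(t_n,|t_n|^{-M-2})$ (pairwise disjoint by power separation) then propagates a global finite-order bound on $f=C_2(AC_1)-C_4(AC_3)$. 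Theorem \ref{dens} forces $D\equiv0$. By Lemma \ref{pol1} applied with exponent $M+N$, the lemma will then follow from (a) $\sum_k|e_k|/|t_k|<\infty$ and (b) $\sum_k\frac{|e_k|^2}{|t_k|^{2(M+N)}\mu_k}<\infty$.

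For (a) and (b) I would split each $\widetilde C_j^{(k)}(t_k)$ into a far part ($|t_k-t_n|\ge|t_k|/2$), which is bounded by an absolute multiple of $\sum_n|d_n^{(j)}|/|t_n|$ uniformly in $k$ and whose contribution is absorbed by the $j=1,3$ hypotheses, and a near part ($|t_n|\asymp|t_k|$), where power separation gives $|t_k-t_n|\gtrsim|t_k|^{-M}$. The hard part will be the near part: the crude pointwise estimate $|\widetilde C_j^{(k),\mathrm{near}}(t_k)|\lesssim|t_k|^{M+1}$ only yields $f\in\mathcal{P}_{M+N+1}\mathcal{H}$, so to reach the sharp exponent one must argue on average rather than pointwise. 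The condition $\sum_n|d_n^{(j)}|/|t_n|<\infty$ prevents a near-minimal-distance spike of $d^{(j)}$ close to $t_k$ from occurring for infinitely many $k$, and a Schur-type double counting over pairs $(k,n)$ with $|t_n|\asymp|t_k|$ — in which the factor carrying index $1$ or $3$, and hence the $\ell^2$-type bound $\sum_n|d_n^{(j)}|^2/(|t_n|^{2N}\mu_n)<\infty$, is paired against the factor carrying index $2$ or $4$ — should close (a) and (b) with the power $M+N$. This averaged near-diagonal estimate, and the bookkeeping that keeps the exponent at exactly $M+N$, is the real obstacle; the rest is routine given Lemmas \ref{pol1}, \ref{gr1}, \ref{gr2}, formula \eqref{meas} and Theorem \ref{dens}.
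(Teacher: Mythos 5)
There is a genuine gap: the two estimates that you yourself single out as ``the real obstacle'' --- namely $\sum_k|e_k|/|t_k|<\infty$ and $\sum_k|e_k|^2/(|t_k|^{2(M+N)}\mu_k)<\infty$ for the residues $e_k$ of $C_1C_2-C_3C_4$ --- are never proved; you only assert that a Schur-type double counting over near-diagonal pairs ``should close'' them. That is exactly where the content of the lemma lies, and it is not routine: as you observe, the pointwise bound $|\widetilde C_j^{(k)}(t_k)|\lesssim|t_k|^{M+1}$ coming from power separation loses one power and only yields $f\in\mathcal{P}_{M+N+1}\mathcal{H}(T,A,\mu)$, and the weighted $\ell^2$ bound on terms such as $d_k^{(2)}\widetilde C_1^{(k)}(t_k)$ cannot be read off the hypotheses, since no $\ell^2$-type condition is assumed for $j=2,4$. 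So the proposal as written establishes at best the weaker inclusion with exponent $M+N+1$ and leaves the claimed exponent $M+N$ unproved.

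The device you are missing is the paper's first move: before multiplying, expand $\sum_n d_n^{(j)}/(z-t_n)$ for $j=2,4$ by \eqref{srt1} as $P_j(z)+z^{M+1}\sum_n p_n^{(j)}/(z-t_n)$, where $\deg P_j\le M$ and $p_n^{(j)}=d_n^{(j)}t_n^{-M-1}$ satisfies $\sum_n|t_n|^{M}|p_n^{(j)}|<\infty$. The terms $A\,P_2\,C_1$ and $A\,P_4\,C_3$ then lie in $\mathcal{P}_{M+N}\mathcal{H}(T,A,\mu)$ immediately, by Lemma \ref{pol1} applied to $AC_1$, $AC_3$ and multiplication by a polynomial of degree at most $M$. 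In the remaining product the double poles still cancel (since $d_n^{(1)}p_n^{(2)}=d_n^{(3)}p_n^{(4)}$), and the cross-sums $\sum_{m\ne n}p_m^{(2)}/(t_n-t_m)$ are now \emph{uniformly bounded}, because \eqref{pow} gives $|t_n-t_m|\gtrsim|t_m|^{-M}$ and the renormalized coefficients absorb the factor $|t_m|^{M}$; the residues are then dominated by $|d_n^{(1)}|$ (resp.\ $|d_n^{(3)}|$), for which both hypotheses of Lemma \ref{pol1} with exponent $N$ are available. No averaged near-diagonal estimate is needed. If you insist on your direct route through the residues $e_k$ of the unmodified product, you would have to actually carry out the double-counting argument, and I do not see how to do so without effectively reproducing this renormalization.
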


\begin{proof}
Using formula \eqref{srt1} we can rewrite
$$
\sum_n \frac{d_n^{(j)}}{z-t_n} = P_j(z) + 
z^{M+1} \sum_n \frac{p_n^{(j)}}{z-t_n}, \qquad j=2,4,
$$
where $P_j$ is the polynomial of degree at most $M$
and $p_n^{(j)} = d_n^{(j)} t_n^{-M-1}$ satisfies 
$\sum_n |t_n|^M |p_n^{(j)}| <\infty$. By Lemma \ref{pol1}, 
$$
A(z)P_2(z)\sum_n \frac{d_n^{(1)}}{z-t_n}, \  \ \
A(z)P_4(z)\sum_n \frac{d_n^{(3)}}{z-t_n} \ \in \
\mathcal{P}_{M+N} \mathcal{H}(T,A,\mu).
$$
It remains to show that
$$
A(z) \bigg( \sum_n \frac{d_n^{(1)}}{z-t_n}  \cdot
\sum_n \frac{p_n^{(2)}}{z-t_n}  - 
\sum_n \frac{d_n^{(3)}}{z-t_n}  \cdot
\sum_n \frac{p_n^{(4)}}{z-t_n} \bigg) \in
\mathcal{P}_{N} \mathcal{H}(T,A,\mu).
$$
Note that by condition $d_n^{(1)} d_n^{(2)} = d_n^{(3)}d_n^{(4)}$
the coefficient at $(z-t_n)^{-2}$ is zero. We have
$$
\sum_n \frac{ d_n^{(1)} }{z-t_n}  \cdot
\sum_m \frac{ p_m^{(2)} }{z-t_m} = 
\frac{ d_n^{(1)} p_n^{(2)} } {(z-t_n)^2}  +
\sum_n \bigg(\sum_{m\ne n} 
\frac{ p_m^{(2)} }{t_n-t_m}\bigg) \frac{ d_n^{(1)} }{z-t_n}.
$$
It follows from power separation that $|t_n-t_m|\gtrsim |t_m|^{-M}$, $n\ne m$, and so 
$\big| \sum_{m\ne n} 
\frac{p_m^{(2)}}{t_n-t_m}\big| \lesssim 1$. Now the inclusion
$$
A(z) \sum_n \bigg(\sum_{m\ne n} 
\frac{ p_m^{(2)} }{ t_n-t_m } \bigg) \frac{ d_n^{(1)} }{z-t_n}
\in \mathcal{P}_{N} \mathcal{H}(T,A,\mu) 
$$
follows from Lemma \ref{pol1}.
\end{proof}
\medskip
\begin{proof}[Proof of Theorem \ref{syn3}]
By the symmetry, we can prove the spectral synthesis up to a finite defect
for the operator $\LL^*$ in place of $\LL$. So we interchange the roles 
of $a$ and $b$ and assume that $\LL^*$ is complete,
$$
\sum_n \frac{|a_n|^2}{|b_n|^2|t_n|^{2N}} <\infty, \qquad |b_n|^2 \nu_n 
\gtrsim |t_n|^{-2N-2}.
$$
We show that the adjoint model operator $\mathcal{T}_G^*$ in $\mathcal{H}(T, A, \mu)$
admits spectral synthesis up to a finite defect. 
Assume that $f(z) = A(z)\sum_n \frac{c_n\mu_n^{1/2}}{z-t_n} 
\in \mathcal{H}(T, A, \mu)$ is orthogonal to 
the mixed system $\mathcal{K}(\Lambda_1, \Lambda_2)$ defined by \eqref{mix},
and let $(S_1, S_2) \in \mathcal{S}_{12}$ be the corresponding entire functions 
for which \eqref{para2} holds. Multiplying the equations in \eqref{para2} we get
equality \eqref{mup} which can be rewritten as follows:
$$
G(z) R(z) = 
A(z)\bigg( \sum_n \frac{G(t_n) \bar c_n}{A'(t_n)\mu_n^{1/2} (z-t_n)}
\cdot \sum_n \frac{c_n\mu_n^{1/2}}{z-t_n}  - 
\frac{G(z)}{A(z)} \sum_n \frac{|c_n|^2}{z-t_n}\bigg). 
$$
Note that 
$$
\frac{G(z)}{A(z)} = 1+ z \sum_n \frac{a_n \bar b_n t_n \nu_n}{z- t_n},
$$
Put 
$$
d_n^{(1)} = \frac{G(t_n) \bar c_n}{A'(t_n)\mu_n^{1/2}}, \quad
d_n^{(2)} = c_n \mu_n^{1/2}, \quad
d_n^{(3)} = a_n \bar b_n t_n \nu_n,
\quad    d_n^{(4)} = |c_n|^2.
$$                         
Then $d_n^{(1)}$, $d_n^{(2)}$, $d_n^{(3)}$ and $d_n^{(4)}$  satisfy  
conditions of Lemma \ref{pol2}. The fact that 
$\sum_n |d_n^{(1)}|^2 |t_n|^{-2N} \mu_n^{-1} <\infty$
follows from \eqref{d1}--\eqref{d2} in the 
proof of Theorem \ref{bio2}, while it is clear that 
$$
\sum_n |d_n^{(3)}|^2\mu_n^{-1} = \sum_n |a_n|^2\nu_n <\infty.
$$
Now we may write
$$
G(z)R(z) = -A(z)\sum_n \frac{|c_n|^2}{z-t_n} +
 A(z) \bigg( \sum_n \frac{d_n^{(1)}}{z-t_n}  \cdot
\sum_n \frac{d_n^{(2)}}{z-t_n}  - 
\sum_n \frac{d_n^{(3)}}{z-t_n}  \cdot
\sum_n \frac{d_n^{(4)}}{z-t_n} \bigg).
$$
By Lemma \ref{pol1}, $A(z)\sum_n \frac{|c_n|^2}{z-t_n} 
\in \mathcal{P}_{N} \mathcal{H}(T,A,\mu)$. Indeed, 
$$
\sum_n \frac{|c_n|^4}{|t_n|^{2N}\mu_n}  = 
\sum_n \frac{|c_n|^4}{|t_n|^{2N+2}|b_n|^2\nu_n} \lesssim 
\sum_n |c_n|^4 <\infty
$$        
by \eqref{dom22}.
Hence, by Lemma \ref{pol2}, $GR \in \mathcal{P}_{M+N} \mathcal{H}(T,A,\mu)$.

Recall that $\mathcal{T}_G^*$ is complete and so there is no nonzero function 
of the form $GU \in \mathcal{H}(T,A,\mu)$. If $R$ has at least $M+N$ zeros
counting multiplicities, then, dividing $R$ by a polynomial $P$ of degree
$M+N$ we have $GRP^{-1} \in \mathcal{H}(T,A,\mu)$, a contradiction. 
Thus, $R=P e^Q$, where $P$ is a polynomial
of degree at most $M+N-1$ and $Q$ is some polynomial
(recall that $R$ is of finite order). 
Assume that $Q\ne const$. Since  
$G\in  {\rm Assoc}\, (T,A,\mu)$,  
the function $G(z)\frac{R(z) - R(w)}{z-w}$
belongs to $\mathcal{P}_{M+N} \mathcal{H}(T,A,\mu)$ and has 
infinitely many zeros
for all $w$ except at most 1, a contradiction. Thus, we conclude that 
$Q\equiv const$ and  so $R$ is a polynomial of degree at most $M+N-1$.

The rest of the proof is the same as the proof of Theorem \ref{syn2}.
If we assume that 
${\rm dim}\, \big(\mathcal{K}(\Lambda_1, \Lambda_2) \big)^\perp > (M+N+1)^2$,
then there exist two mutually orthogonal functions $f, g$ in
$\big(\mathcal{K}(\Lambda_1, \Lambda_2) \big)^\perp$ with the  properties
as in Step 2 of the proof of Theorem \ref{syn2} which again leads to a 
contradiction.
\end{proof}
\bigskip


\section{Counterexamples}
\label{th26}

In this  section we prove Theorem \ref{coun}. The following observation is trivial,
but leads to a substantial simplification of the construction compared 
to \cite{bbb1, by}. It says that it is sufficient to construct counterexamples
on  an arbitrarily sparse part of the spectrum. 

\begin{lemma}
\label{bro1}
Let $\A$ be a compact normal cyclic operator
and let $\A = \A_1\oplus \A_2$ with respect to 
decomposition $H = H_1\oplus H_2$. 
Let $K\in \mathbb{N} \cup \{\infty\}$.
Assume that either there exists a rank one perturbation 
$\LL_1$ of $\A_1$ such that $\LL_1$ is complete 
and ${\rm dim}\, (\EE(\LL_1^*))^\perp = K$, 
or $\LL_1$ and $\LL^*_1$ are complete, but 
${\rm dim}\, \mathcal{M}_1 \ominus \EE(\MM_1, \LL_1) = K$ 
for some $\LL_1$-invariant subspace $\MM_1$. Then for 
the operator $\LL = \LL_1 \oplus \A_2$ 
we have, respectively, that $\LL$ is complete, but 
${\rm dim}\, (\EE(\LL^*))^\perp = K$, 
or $\LL$ and $\LL^*$ are complete, but 
${\rm dim}\, \mathcal{M} \ominus \EE(\MM, \LL) = K$ 
for some $\LL$-invariant subspace $\MM$.
\end{lemma}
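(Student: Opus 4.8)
The plan is to exploit the block structure. Since $a = a_1\oplus 0$ and $b = b_1\oplus 0$ with respect to $H = H_1\oplus H_2$, the operator $\LL = \A + a\otimes b$ is \emph{block diagonal}: $\LL = \LL_1\oplus \A_2$, and consequently $\LL^* = \LL_1^*\oplus \A_2^*$. The second summand $\A_2$, being a compact normal operator, has a complete orthogonal system of eigenvectors, and so does $\A_2^*$; thus the second coordinate can contribute no obstruction to completeness or to spectral synthesis. This is what makes the transfer of counterexamples possible.

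First I would record how root subspaces behave under direct sums: for every $\lambda$ and $n$ one has $\Ker(\LL - \lambda I)^n = \Ker(\LL_1 - \lambda I)^n \oplus \Ker(\A_2 - \lambda I)^n$, and since $\A_2$ is normal its generalized eigenvectors are genuine eigenvectors, i.e. $\Ker(\A_2 - \lambda I)^n = \Ker(\A_2 - \lambda I)$. Taking closed linear spans over all $\lambda, n$ and using completeness of the eigenvectors of $\A_2$ one gets $\EE(\LL) = \EE(\LL_1)\oplus H_2$ and, symmetrically, $\EE(\LL^*) = \EE(\LL_1^*)\oplus H_2$. Hence, inside $H_1\oplus H_2$, $\big(\EE(\LL)\big)^\perp = \big(\EE(\LL_1)\big)^\perp \oplus \{0\}$ and likewise for the adjoint, so the defects are preserved: $\LL$ is complete iff $\LL_1$ is, $\LL^*$ is complete iff $\LL_1^*$ is, and ${\rm dim}\,\big(\EE(\LL^*)\big)^\perp = {\rm dim}\,\big(\EE(\LL_1^*)\big)^\perp$. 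This already settles the first alternative: if $\LL_1$ is complete with ${\rm dim}\,(\EE(\LL_1^*))^\perp = K$, then $\LL$ is complete with ${\rm dim}\,(\EE(\LL^*))^\perp = K$.

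For the second alternative, given the $\LL_1$-invariant subspace $\MM_1$ with ${\rm dim}\,\big(\MM_1\ominus\EE(\MM_1,\LL_1)\big) = K$, I would put $\MM = \MM_1\oplus H_2$; it is immediate from $\LL = \LL_1\oplus\A_2$ that $\MM$ is $\LL$-invariant. The key identity is $\EE(\MM,\LL) = \EE(\MM_1,\LL_1)\oplus H_2$. For the inclusion ``$\supseteq$'', note that $\EE(\MM_1,\LL_1)\oplus\{0\}$ is spanned by root vectors $x_1\oplus 0$ of $\LL$ lying in $\MM$, while $\{0\}\oplus H_2$ is spanned by eigenvectors $0\oplus e$ of $\LL$ (with $e$ an eigenvector of $\A_2$) lying in $\MM$. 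For ``$\subseteq$'', any $x_1\oplus x_2\in\MM$ lying in some $\Ker(\LL - \lambda I)^n$ has $x_1\in\MM_1\cap\Ker(\LL_1 - \lambda I)^n\subset\EE(\MM_1,\LL_1)$ and $x_2\in H_2$. Therefore $\MM\ominus\EE(\MM,\LL) = \big(\MM_1\ominus\EE(\MM_1,\LL_1)\big)\oplus\{0\}$ has dimension $K$, while $\LL$ and $\LL^*$ are complete by the previous paragraph; this handles the second alternative.

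The only point requiring a little care is the direct-sum formula for root subspaces when $\A_2$ and $\LL_1$ share an eigenvalue, but this is harmless: normality of $\A_2$ rules out any Jordan structure coming from the second summand, so no generalized eigenvectors of rank $>1$ are added there. Everything else is routine bookkeeping with orthogonal direct sums.
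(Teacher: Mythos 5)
Your argument is correct and follows essentially the same route as the paper: everything reduces to block-diagonal bookkeeping for $\LL = \LL_1 \oplus \A_2$ together with the completeness of the eigenvectors of the compact normal operator $\A_2$. The only (immaterial) difference is that in the synthesis case the paper takes $\MM = \MM_1 \oplus \{0\}$ rather than your $\MM = \MM_1 \oplus H_2$; both choices yield $\MM \ominus \EE(\MM, \LL) = \big(\MM_1 \ominus \EE(\MM_1, \LL_1)\big) \oplus \{0\}$, of dimension $K$.
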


\begin{proof}
The proof is obvious. Consider, e.g., the statement about
synthesis. Assume that there exists a 
$\LL_1$-invariant subspace $\MM_1$ such that
${\rm dim}\, (\mathcal{M}_1 \ominus \EE(\MM_1, \LL_1)) = K$.
Then $\MM = \mathcal{M}_1\oplus \{0\}$ is $\LL$-invariant 
and $\EE(\MM, \LL) = \EE(\MM_1, \LL_1) \oplus \{0\}$.  
\end{proof}

It is clear that in Lemma \ref{bro1} 
$\LL$ is a rank one perturbation of $\A$. Thus we see that
it is sufficient to construct examples for the restriction of 
$\A$ to any invariant (with respect to $\A$ and $\A^*$) subspace.
We will choose $H_1 = L^2(\nu_1)$ where $\nu_1$ is the restriction of the initial measure 
$\mu$ to an infinite, but sparse part of the spectrum $\{s_n\}$.
By sparseness we will mean (Hadamard-type) {\it lacunarity} 
of the inverse spectrum $\{t_n\}$,
$t_n = s_n^{-1}$: 
\begin{equation}
\label{lac0}
\inf_n \bigg|\frac{t_{n+1}}{t_n}\bigg| >1.
\end{equation}

In view of Lemma \ref{bro1} in our examples below we can always
assume that $\A$ is a compact normal operator with spectrum $\{s_n\}$
such that $\{t_n\}$ is a lacunary sequence.
\medskip


\subsection{Proof of Theorem \ref{coun}: biorthogonal systems 
with finite defect.} 
\label{fin}
Let $N\in \mathbb{N}$. We will prove the following statement:
\medskip

{\it Let $T=\{t_n\}$ be a lacunary sequence satisfying \eqref{lac0}. 
Then there exists a space 
$\mathcal{H}(T, A, \mu)$ and a function 
$G \in {\rm Assoc}\,(T, A, \mu) \setminus \mathcal{H}(T, A, \mu)$ 
with simple zeros such that
for $\Lambda = \mathcal{Z}_G$ the system $\{k_\lambda\}_{\lambda\in \Lambda}$
is complete in $\mathcal{H}(T, A, \mu)$, but 
$$
{\rm dim}\, \bigg(\mathcal{H}(T, A, \mu)\ominus
\ospan\Big\{\frac{G}{z-\lambda}:\lambda\in \Lambda\Big\}\bigg) = N. 
$$ } 

Thus, the  model operator $\mathcal{T}_G$ is incomplete with defect $N$, 
while its biorthogonal is complete. 
Then $\LL = \mathcal{T}_G^*$, the adjoint to the model operator, 
will be a rank one perturbation of a compact normal operator 
with spectrum $\{\bar s_n\}$, $s_n = t_n^{-1}$,
and its adjoint will be incomplete with defect $N$. 

Let $A(z) = \prod\big(1-\frac{1}{t_n}\big)$. 
Put $\mu_n = |t_n|^{-2N}$. Let $\tilde t_n = t_n + \frac{1}{2}$, 
$\tilde T = \{\tilde t_n\}$
and $\tilde A(z) = \prod (1-z/\tilde t_n)$. 
Then, by the standard estimates of infinite products with lacunary zeros, 
for $z\in \mathbb{C}\setminus T$,
\begin{equation}
\label{lac}
\bigg|\frac{\tilde A(z)}{A(z)}\bigg|
\asymp \bigg| \frac{{\rm dist}\, (z, \tilde T)}{{\rm dist}\, (z, T)} \bigg|, \qquad
\bigg|\frac{\tilde A(t_n)}{A'(t_n)}\bigg|\asymp 1, \quad t_n\in T.
\end{equation}

We will also use the following simple observation:  if $\sum_n |d_n| <\infty$, then
\begin{equation}
\label{lac1}
\bigg| \sum_n\frac{d_n}{z-t_n} \bigg| = o(1), \quad
\bigg| \sum_n\frac{t_n d_n}{z-t_n} \bigg| = o(|z|), \qquad 
|z|\to \infty, \ \ {\rm dist}\, (z, T) \ge 1.
\end{equation}
In particular, $|f(z)| = o(|zA(z)|)$ for any $f\in 
\mathcal{H}(T, A, \mu)$ when  
$|z|\to \infty$, ${\rm dist}\, (z, T) \ge 1$.

Let $P$ be a polynomial of degree $N$ such that $\mathcal{Z}_P
\subset \mathcal{Z}_{\tilde A}$. Put $G = \tilde A/P$, $\Lambda = 
\mathcal{Z}_G$. 
\medskip
\\
{\bf Step 1:} $G\in {\rm Assoc}\,(T, A, \mu) \setminus \mathcal{H}(T, A, \mu)$.
\\
We have
$$
\frac{|G(t_n)|^2}{|A'(t_n)|^2\mu_n} = 
\frac{|\tilde A(t_n)|^2}{|A'(t_n)|^2|P(t_n)|^2\mu_n} \asymp 1,
$$
whence $G\notin \mathcal{H}(T, A, \mu)$ by Lemma \ref{gr2}. On the other hand,
$\sum_n \frac{|G(t_n)|^2}{|A'(t_n)|^2|t_n|^2\mu_n} <\infty$. Let us show
that $\frac{G}{z-\lambda} \in \mathcal{H}(T, A, \mu)$ for any $\lambda\in \Lambda$. 
We need to show that
$$
\frac{G(z)}{(z-\lambda)A(z)} =
\sum_n \frac{G(t_n)}{(t_n-\lambda)A'(t_n) (z-t_n)}.
$$
The following standard argument will be repeated several times in the 
constructions below. Put 
$$
H(z) = \frac{G(z)}{(z-\lambda)A(z)} - 
\sum_n \frac{G(t_n)}{(t_n-\lambda)A'(t_n) (z-t_n)}.
$$
The poles cancel and so $H$ is an entire function. 
We have $|G(z)|\asymp |P(z)|^{-1}|A(z)|$, ${\rm dist}\, (z, T) \ge 1$.
Also, by \eqref{lac1}, the last series is $o(1)$ when $|z|\to \infty$
and ${\rm dist}\, (z, T) \ge 1$. Hence, $|H(z)|\to 0$, $|z|\to\infty$, and so 
$H\equiv 0$. We conclude that $\frac{G}{z-\lambda} \in \mathcal{H}(T, A, \mu)$.
\medskip
\\
{\bf Step 2:} {\it $\{k_\lambda\}_{\lambda\in \Lambda}$ 
is complete in $\mathcal{H}(T, A, \mu)$.}
\\
Assume that $GU \in \mathcal{H}(T, A, \mu)$ for some entire $U$.
Then, $G(z)U(z) = A(z)\sum_n \frac{d_n \mu_n^{1/2}}{z-t-n}$
for some $(d_n)\in\ell^2$ and so, by \eqref{lac1}, 
$|G(z)U(z)/A(z)| = o(|z|)$ when $|z|\to \infty$, 
${\rm dist}\, (z, T) \ge 1$. Also,
$$
\bigg| \frac{G(z)U(z)}{A(z)} \bigg| 
= \bigg| \frac{\tilde A(z)U(z)}{A(z)P(z)}\bigg| 
\asymp \bigg|\frac{U(z)}{P(z)} \bigg|,
\qquad {\rm dist}\, (z, T) \ge 1. 
$$
We conclude that $U$ is a polynomial. 

Since $GU \in \mathcal{H}(T, A, \mu)$ we have
$$
\sum_n |U(t_n)|^2 \asymp 
\sum_n \frac{|G(t_n)U(t_n)|^2}{|A'(t_n)|^2\mu_n} <\infty,
$$
and so $U\equiv 0$.
\medskip
\\
{\bf Step 3:} 
${\rm dim}\, \big\{ \frac{G(z)}{z-\lambda}:\lambda\in \Lambda\big\}^\perp = N$. 
\\
We use parametrization of the orthogonal complement to a system
of the form $\big\{\frac{G}{z-\lambda}\big\}_{\lambda\in \Lambda}$ 
introduced in Subsection \ref{param}. It is parametrized 
by the space $\mathcal{S}$ which consists of entire functions $S$ such that
\begin{equation}
\label{biox}
S(z)G(z) = A(z) \sum_n \frac{\bar c_n G(t_n)}{A'(t_n)\mu_n^{1/2}(z-t_n)}
\end{equation}
where $(c_n) \in \ell^2$. Assume first that $S\in \mathcal{S}$. Then it 
follows from representation \eqref{biox} that 
$S(z)G(z) = A(z) \sum_n \frac{d_n}{z-t_n}$, where $\sum_n |t_n|^{-1}|d_n| <\infty$, 
and so, by \eqref{lac1}, 
$$
\bigg| \frac{S(z)\tilde A(z)}{A(z)P(z)} \bigg| = 
\bigg| \frac{S(z)G(z)}{A(z)} \bigg| = o(|z|), \qquad
{\rm dist}\, (z, T) \ge 1.
$$
It follows from the estimates \eqref{lac} that $S$ is a polynomial.
Also we have
$c_n = \overline{S(t_n)} \mu_n^{1/2}$, and so $\sum_n |S(t_n)|^2\mu_n<\infty$. 
Since $\mu_n = |t_n|^{-2N}$ we conclude that degree of $S$ does not exceed $N-1$.

Conversely, let $S\in \mathcal{P}_{N-1}$ and put 
$c_n = \overline{S(t_n)} \mu_n^{1/2}$. We need to show that \eqref{biox} holds, 
that is, there is the interpolation formula
\begin{equation}
\label{biox1}
\frac{S(z)G(z)}{A(z)} = \sum_n \frac{S(t_n)G(t_n)}{A'(t_n)(z-t_n)}.
\end{equation}
We argue as in Step 1. The residues at the points $t_n$
coincide and so the difference between the left-hand side
and the right-hand side of \eqref{biox1}
is an entire function.  We have
$$
\bigg|\frac{S(z)G(z)}{A(z)}\bigg| \lesssim\frac{1}{|z|}, 
\qquad \bigg| 
\sum_n \frac{S(t_n)G(t_n)}{A'(t_n)(z-t_n)}\bigg| =o(1), \qquad
|z|\to\infty, \ \ {\rm dist}\, (z, T) \ge 1, 
$$
and so \eqref{biox1} holds. Thus, $\mathcal{S} = \mathcal{P}_{N-1}$ 
and 
${\rm dim}\, \big\{ \frac{G}{z-\lambda}:\lambda\in \Lambda\big\}^\perp 
={\rm dim}\, \mathcal{S} = N$.
\qed
\medskip


\subsection{Proof of Theorem \ref{coun}: biorthogonal systems 
with infinite defect.} 
\label{infin}
As in Subsection \ref{fin}, for a lacunary sequence $T$, we construct
a measure $\mu$ and a function 
$G \in {\rm Assoc}\,(T, A, \mu) \setminus \mathcal{H}(T, A, \mu)$ such that
for $\Lambda = \mathcal{Z}_G$ the system $\{k_\lambda\}_{\lambda\in \Lambda}$
is complete in $\mathcal{H}(T, A, \mu)$, but 
$$
{\rm dim}\, \bigg(\mathcal{H}(T, A, \mu)\ominus
\ospan\Big\{\frac{G}{z-\lambda}:\lambda\in \Lambda\Big\}\bigg) = \infty. 
$$  

The following lemma from \cite{bbb1} will be crucial here. 
One can also use a simpler, but less sharp result of \cite[Lemma 7.1]{by}.
As usual, for a sequence $\Gamma$, we denote
by $n_\Gamma$ its counting function: 
$n_\Gamma(r) = \#\{\gamma \in \Gamma: |\gamma| <r\}$. For an entire function $f$ we write $n_f$
in place of $n_{\mathcal{Z}(f)}$. 

\begin{lemma} \textup(\cite[Lemma 9.2]{bbb1}\textup)
\label{newl}
Let $\Gamma$ be a lacunary sequence and let $f$
be an entire function of zero exponential type such that 
$$
\int_0^R\frac{n_f(r)}{r}dr = o\bigg(\int_0^R  \frac{n_\Gamma(r)}{r}dr\bigg),
\qquad  R\to \infty.
$$ 
If $\{f(\gamma)\}_{\gamma\in \Gamma}\in \ell^\infty$, then $f$ is a constant. 
\end{lemma}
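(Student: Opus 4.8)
The idea is to turn boundedness of $f$ on $\Gamma$ into a precise comparison of $f$ with the canonical product over $\Gamma$, and then to invoke Borichev's Theorem~\ref{dens}. Write $N_\Gamma(R)=\int_0^R\frac{n_\Gamma(r)}{r}\,dr$ and $N_f(R)=\int_0^R\frac{n_f(r)}{r}\,dr$, so the hypothesis is $N_f(R)=o(N_\Gamma(R))$; note $N_\Gamma(R)\lesssim(\log R)^2$ for any lacunary $\Gamma$, while $n_\Gamma(R),\log R=o(N_\Gamma(R))$. First I would normalise: dividing out a possible zero of $f$ at the origin only strengthens boundedness on $\Gamma$ (since $|\gamma_k|\to\infty$), so I may assume $f(0)=1$; the case of $f$ polynomial is immediate since then $|f(\gamma_k)|\to\infty$. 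Since $n_f(R)=o(N_\Gamma(R))\lesssim(\log R)^2$ one has $\sum_j|z_j|^{-1}<\infty$ for the zeros $z_j$ of $f$, so $f=\Pi\cdot e^{P}$ with $\Pi(z)=\prod_j(1-z/z_j)$ a genus-zero canonical product and $P$ entire. From the sparsity of the zeros one gets $\log M_\Pi(R)\le N_f(R)+O(n_f(R))+o(N_\Gamma(R))=o(N_\Gamma(R))$, and, by a direct estimate of $\Pi$, $\log|\Pi(z)|\ge-o(|z|)$ off the unit discs about the $z_j$; since $f$ is of zero exponential type, $\operatorname{Re}P(z)=\log|f(z)|-\log|\Pi(z)|=o(|z|)$ off those discs, hence (by the maximum principle on circles avoiding the discs, together with Borel--Carath\'eodory and the Cauchy estimates) $P$ is constant. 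Thus $f=\Pi$ and $\log M_f(R)=o(N_\Gamma(R))$.

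Next, let $B(z)=\prod_k(1-z/\gamma_k)$, the genus-zero canonical product over $\Gamma$ (legitimate since $\sum|\gamma_k|^{-1}<\infty$ by lacunarity). Outside the set $\Omega=\bigcup_kD(\gamma_k,1)$ — which has zero area density because $D(0,R)$ contains only $n_\Gamma(R)=O(\log R)$ of these discs — one has the standard estimate $\log|B(z)|=N_\Gamma(|z|)\bigl(1+o(1)\bigr)$, so $|f(z)/B(z)|\le M_f(|z|)\,|B(z)|^{-1}=e^{\,o(N_\Gamma(|z|))-N_\Gamma(|z|)(1+o(1))}\to0$ as $|z|\to\infty$, $z\notin\Omega$. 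The residues $r_k=f(\gamma_k)/B'(\gamma_k)$ satisfy $|r_k|\lesssim|f(\gamma_k)|\,e^{-N_\Gamma(|\gamma_k|)(1+o(1))}$, hence decay super-exponentially and $\sum_k|r_k|<\infty$; therefore $\Psi(z):=\frac{f(z)}{B(z)}-\sum_k\frac{r_k}{z-\gamma_k}$ is entire (the simple poles at the $\gamma_k$ cancel) and of finite order. For $z\notin\Omega$ we have $|z-\gamma_k|\ge1$ for every $k$, so $|\Psi(z)|\le|f(z)/B(z)|+\sum_k|r_k|=O(1)$. Thus $\Psi$ is entire of finite order and bounded off a set of zero area density, so Theorem~\ref{dens} gives $\Psi\equiv\text{const}$; as both $f/B$ and the Cauchy-type series tend to $0$ along $\mathbb{C}\setminus\Omega$, that constant is $0$, i.e.\ $f(z)=B(z)\sum_k\frac{r_k}{z-\gamma_k}$.

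Finally, I would convert this representation into a contradiction unless $f$ is constant. The meromorphic function $\Sigma=f/B=\sum_k r_k/(z-\gamma_k)$ has poles exactly at $\Gamma'=\{\gamma_k:r_k\neq0\}$ and decays at infinity ($\Sigma(z)=cz^{-m}(1+o(1))$ for some finite $m\ge1$, unless $\Sigma\equiv0$, which would force $f\equiv0$, contradicting $f(0)=1$); by the argument principle, for $r$ outside a thin exceptional set of radii its zero-counting function satisfies $n^0_\Sigma(r)=n_{\Gamma'}(r)-m+O(1)$. The zeros of $f=B\Sigma$ are the zeros of $\Sigma$ together with the $\gamma_k$ for which $r_k=0$, so $n_f(r)=n_\Gamma(r)-m+O(1)$ for such $r$; integrating and using $n_\Gamma(R),\log R=o(N_\Gamma(R))$ gives $N_f(R)=N_\Gamma(R)\bigl(1+o(1)\bigr)$, which contradicts $N_f(R)=o(N_\Gamma(R))$. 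Hence $f$ is constant.

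I expect the main obstacle to be the \emph{quantitative uniformity} required in the second paragraph: one must keep all exceptional sets of \emph{zero area density} — rather than merely of zero linear or ``small radius'' density — so that Theorem~\ref{dens} is applicable, and in particular one needs the canonical-product lower bound $\log|B(z)|\gtrsim N_\Gamma(|z|)$ away from the \emph{unit} discs $D(\gamma_k,1)$, which is delicate precisely when $\Gamma$ is very sparse and $N_\Gamma(R)$ grows only slightly faster than $\log R$ (so that the ``error'' terms in the product estimate are genuinely of lower order). A secondary, more routine difficulty is the argument-principle bookkeeping in the last paragraph, where one must control the winding of $\Sigma$ along circles threading between the poles $\gamma_k$ in order to compare $n^0_\Sigma$ with $n_{\Gamma'}$.
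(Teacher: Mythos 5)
The paper does not actually prove this statement; it imports it verbatim from \cite[Lemma 9.2]{bbb1}, so there is no internal proof to compare against. Judged on its own terms, your first two paragraphs are sound and natural: the hypothesis combined with zero exponential type does give $\log M_f(R)=o(N_\Gamma(R))$, and the identity $f=B\,\Sigma$ with $\Sigma(z)=\sum_k r_k/(z-\gamma_k)$, $r_k=f(\gamma_k)/B'(\gamma_k)$, $\sum_k |r_k|\,|\gamma_k|^m<\infty$ for every $m$, is correctly established (you do not even need Theorem \ref{dens} there: $\Psi$ is bounded on a sequence of circles avoiding $\Omega$, hence bounded everywhere by the maximum principle, hence constant by Liouville).

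The last paragraph, however, contains a fatal gap. You assert that either $\Sigma\equiv 0$ or $\Sigma(z)=cz^{-m}(1+o(1))$ for some finite $m\ge 1$; this amounts to claiming that not all moments $c_j=\sum_k r_k\gamma_k^j$, $j\ge 0$, vanish. That dichotomy is false: take $f\equiv 1$. Then $r_k=1/B'(\gamma_k)$, $\Sigma=1/B\not\equiv 0$, and $c_j=\sum_k \gamma_k^j/B'(\gamma_k)$ is the total sum of residues of $z^j/B(z)$, which is $\lim_{R}\frac{1}{2\pi i}\oint_{|z|=R}z^j B(z)^{-1}\,dz=0$ for every $j$, since $|B|\ge e^{(1-o(1))N_\Gamma(R)}$ on good circles and $N_\Gamma(R)/\log R\to\infty$. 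So all moments vanish, $\Sigma$ decays like $e^{-N_\Gamma(|z|)}$ (faster than any power), the winding number of $\Sigma$ on $|z|=r$ is $-n_\Gamma(r)$ rather than $-m$, and your count $n^0_\Sigma(r)=n_{\Gamma'}(r)-m+O(1)$ fails. Taken literally, your final step would ``refute'' the constant function. The real content of the lemma is exactly the case you dismiss: one must prove that a \emph{non-constant} $f$ of the stated kind cannot have all moments $\sum_k f(\gamma_k)\gamma_k^j/B'(\gamma_k)$ equal to zero. This is not a routine detail. The only decay available is $|r_k|\lesssim |\gamma_k|e^{-N_\Gamma(|\gamma_k|)}$, which is precisely the borderline rate at which annihilation arguments on a lacunary set break even: testing $\sum_k r_k p(\gamma_k)=0$ against the Lagrange polynomial $p_K$ through $\gamma_1,\dots,\gamma_K$ leaves a tail whose first term $r_{K+1}p_K(\gamma_{K+1})$ is bounded below independently of $K$, so no conclusion follows; and since $\sum_k \gamma_k^j/B'(\gamma_k)=0$, subtracting a constant from $f$ does not change the moments, so no normalization removes the difficulty. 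Closing this case requires a genuinely new idea, which is where the actual proof in \cite{bbb1} does its work.
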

\medskip
{\bf Step 1:} {\it Construction of $G$. }

Choose an infinite $T_0 \subset T$ such that $n_{T_0} = o(n_{T\setminus T_0})$.
As in the previous subsection let  
$\tilde T = \{\tilde t_n\}$, $\tilde t_n = t_n + \frac{1}{2}$, 
and let 
$$
\tilde A(z) = \prod_n\bigg(1-\frac{1}{\tilde t_n}\bigg), \qquad
U(z) = \prod_{t_n \in T_0} \bigg(1-\frac{1}{\tilde t_n}\bigg)
$$
Define the measure $\mu=\sum_n \mu_n \delta_{t_n}$ by
$$
\mu_n =  
\begin{cases}
1, & t_n\in T_0, \\
|U'(t_n)|^{-2}, & t_n\in T\setminus T_0.
\end{cases}
$$
Finally, put $G=\tilde A/U$. Then we have
$$
\begin{aligned}
\frac{G(z)}{A(z)} & = \sum_n \frac{\tilde A(t_n)}{A'(t_n)U(t_n) (z-t_n)}, \\
\frac{G(z)}{(z-\lambda) A(z)} & = \sum_n \frac{\tilde A(t_n)}{A'(t_n)U(t_n) 
(t_n-\lambda) (z-t_n)}, \qquad \lambda\in \Lambda = \mathcal{Z}_G. 
\end{aligned}
$$
Convergence of the above series follows from estimate \eqref{lac} 
and the fact that $|U(t_n)|\gtrsim |t_n|^N$ for any $N>0$. The interpolation formulas 
follow by the same arguments as in Subsection \ref{fin}, Step 1. 
We also have
$$
\sum_{n} \frac{|G(t_n)|^2}{|A'(t_n)|^2 \mu_n} \asymp 
\sum_{n} \frac{1}{|U(t_n)|^2 \mu_n} = \infty,
$$
since $|U(t_n)|^2 \mu_n = 1$, $t_n\in T\setminus T_0$. However, 
for any $\lambda\in \Lambda=\mathcal{Z}_G$,
$$
\sum_{n} \frac{|G(t_n)|^2}{|t_n-\lambda|^2|A'(t_n)|^2 \mu_n} \asymp 
\sum_{t_n\in T_0} \frac{1}{|t_n-\lambda|^2 |U(t_n)|^2} +
\sum_{t_n\in T\setminus T_0} \frac{1}{|t_n-\lambda|^2} < \infty.
$$
Thus, $G\notin \mathcal{H}(T, A, \mu)$, but $\frac{G(z)}{z-\lambda} 
\in \mathcal{H}(T, A, \mu)$, $\lambda\in \Lambda$.
\medskip
\\
{\bf Step 2:} {\it $\{k_\lambda\}_{\lambda\in \Lambda}$ 
is complete in $\mathcal{H}(T, A, \mu)$.}

Assume that the system $\{k_\lambda\}_{\lambda\in \Lambda}$  is not complete
and so there is an entire function $V$ such that $GV \in  \mathcal{H}(T, A, \mu)$.
We have, by \eqref{lac1}, 
$$
\bigg|\frac{V(z)}{U(z)} \bigg| =
\bigg|\frac{G(z)V(z)}{\tilde A(z)} \bigg| \asymp
\bigg|\frac{G(z)V(z)}{A(z)} \bigg| \lesssim |z|, \qquad  
{\rm dist}\, (z, T) \ge 1.  
$$
Hence, $\log M_V(r) \lesssim \log M_U(r)$ 
(where $M_f(r) = \max_{|z| = r}|f(z)|$) and so, by the classical Iensen formula, 
$$
\int_0^R\frac{n_V(r)}{r}dr \lesssim 
\int_0^R  \frac{n_{T_0} (r)}{r}dr.
$$
On the other hand, 
$$
\sum_n \frac{|V(t_n)|^2}{|U(t_n)|^2 \mu_n} \asymp
\sum_n \frac{|G(t_n)V(t_n)|^2}{|A'(t_n)|^2 \mu_n} <\infty,
$$
whence $\sum_{t_n \in T\setminus T_0} |V(t_n)|^2 <\infty$. 
Since $n_{T_0} = o(n_{T\setminus T_0})$, $V\equiv 0$ by Lemma \ref{newl}.
\medskip
\\
{\bf Step 3:} 
${\rm dim}\, \big\{ \frac{G(z)}{z-\lambda}:\lambda\in \Lambda\big\}^\perp = \infty$. 

Construct an entire function
$$
S(z) = \prod_{t_n\in T_0} \bigg(1- \frac{z}{t_n +\vep_n} \bigg),
$$
where $\vep_n\ne 0$ are chosen to be so small that $\sum_{t_n\in T_0} |S(t_n)|^2 <\infty$.
Then, similarly to \eqref{lac},  we have $|S(t_n)| \asymp |U(t_n)|$, $t_n\in
T\setminus T_0$. Let $P$ be an arbitrary nonconstant polynomial such that 
$\mathcal{Z}_P \subset \mathcal{Z}_S$. Then
$$
\sum_n \bigg|\frac{G(t_n)S(t_n)}{A'(t_n)P(t_n)}\bigg| <\infty
$$
and 
$$
\sum_n \bigg|\frac{S(t_n)}{P(t_n)}\bigg|^2 \mu_n 
= \sum_{t_n \in T_0} \bigg|\frac{S(t_n)}{P(t_n)}\bigg|^2 
+ \sum_{t_n \in T\setminus T_0} \frac{|S(t_n)|^2}{|P(t_n)|^2 
|U(t_n)|^2} <\infty.
$$
Put $c_n = \overline{S(t_n)} \mu_n^{1/2}/\overline{P(t_n)}$. Then,
by the argument used in Subsection \ref{fin}, Step 1, we have the interpolation formula
$$
\frac{G(z)S(z)}{P(z)A(z)} = \sum_n 
\frac{G(t_n)S(t_n)}{A'(t_n)P(t_n)(z-t_n)} = 
\sum_n \frac{G(t_n)\bar c_n}{A'(t_n)\mu_n^{1/2} (z-t_n)}.
$$ 
Hence, by Subsection \ref{param}, $f(z) = A(z)\sum_n \frac{c_n \mu_n^{1/2}}{z-t_n}$
is orthogonal to $\big\{\frac{G(z)}{z-\lambda}\big\}_{\lambda\in \Lambda}$
and $S/P$ belongs to the corresponding space $\mathcal{S}$. 
Choose a sequence $P_j$ of polynomials such that
$P_j$ is a polynomials of degree $j$ and $\mathcal{Z}_{P_j} \subset \mathcal{Z}_S$.
Clearly, the system $S/P_j$ is linearly independent in $\mathcal{S}$ whence
${\rm dim}\, \big\{ \frac{G(z)}{z-\lambda}:\lambda\in \Lambda\big\}^\perp = \infty$. 
\medskip


\subsection{Proof of Theorem \ref{coun}: complete perturbations without synthesis.}
Let $N\in \mathbb{N} \cup\{\infty\}$. By the results of Subsections \ref{fin} 
and \ref{infin}, for any lacunary spectrum, there is an example of a 
complete system of reproducing kernels whose biorthogonal system
is incomplete with defect $N$. 

Now let $T$ be lacunary and let $T = T_1 \cup T_2$ so that $n_{T_2}(r) = 
o(n_{T_1}(r))$, $r\to \infty$. Let $A_2(z) = \prod_{t_n\in T_2} (1-z/t_n)$.
Assume that we have chosen a measure 
$\mu^{(2)} = \sum_{t_n \in T_2} \mu_n^{(2)} \delta_{t_n}$ and a function 
$G_2$ such that
\begin{itemize}
\item
$G_2$ is a canonical product with lacunary zeros;
\smallskip
\item
$G_2 \in {\rm Assoc}\,(T_2, A_2, \mu^{(2)}) \setminus \mathcal{H}(T_2, A_2, \mu^{(2)})$;
\smallskip
\item
for $\Lambda_2 = \mathcal{Z}_{G_2}$ the system 
$\{k^{(2)}_\lambda\}_{\lambda\in \Lambda_2}$
is complete in $\mathcal{H}(T_2, A_2, \mu^{(2)})$;
\smallskip
\item ${\rm dim}\, \big(\mathcal{H}(T_2, A_2, \mu^{(2)})\ominus
\ospan\big\{\frac{G_2(z)}{z-\lambda}:\lambda\in \Lambda_2\big\}\big) = N.$ 
\end{itemize}
\smallskip
Here $k^{(2)}_\lambda$ denote the reproducing kernel
in the space $\mathcal{H}(T_2, A_2, \mu^{(2)})$.
Thus, there exist $N$ linearly independent functions of the form 
$f(z) = A_2(z)\sum_{t_n\in T_2} \frac{c_n (\mu^{(2)}_n)^{1/2}}{z-t_n}$ such that 
\begin{equation}
\label{bab2}
A_2(z) \sum_{t_n\in T_2} \frac{G_2(t_n) \bar c_n }{A_2'(t_n)(\mu^{(2)}_n)^{1/2} (z-t_n)}  
= G_2(z)S_2(z)
\end{equation}                                    
for some entire function $S_2$. 

Put $\mu_n = 1$, $t_n\in T_1$, and $\mu_n = \mu_n^{(2)}$, $t_n \in T_2$,
and consider the corresponding Cauchy--de~Branges space 
$\mathcal{H}(T, A, \mu)$, $A=A_1A_2$.
Define $c_n = 0$, $t_n\in T_1$. Then, 
multiplying the formula for $f$ and \eqref{bab2} by $A_1$, we get
$$
\begin{aligned}
A_1(z) f(z) & = A_1(z)A_2(z) \sum_{t_n\in T} \frac{c_n \mu_n^{1/2}}{z-t_n},
\\
G_2(z) A_1(z) S_2(z) & = 
A_1(z)A_2(z) \sum_{t_n \in T}
\frac{G_2(t_n) A_1(t_n) \bar c_n }{(A_1A_2)'(t_n)\mu_n^{1/2} (z-t_n)}. 
\end{aligned}
$$
By the discussion in Subsection \ref{her1}, these equations are equivalent to the fact             
that the function $A_1f$ is orthogonal to the mixed system
\begin{equation}
\label{bab3}
\{k_\lambda\}_{\lambda\in T_1} \cup
\Big\{\frac{A_1(z) G_2(z)}{z-\lambda}\Big\}_{\lambda\in \Lambda_2}
\end{equation}
in $\mathcal{H}(T, A, \mu)$ (here we denote by $k_\lambda$ the reproducing kernel 
of the space $\mathcal{H}(T, A, \mu)$. Moreover, if some function 
$g \in \mathcal{H}(T, A, \mu)$ is orthogonal to the system \eqref{bab3}, then 
$g(t_n) =0$, $t_n \in T_1$. Hence, $g=A_1f$ where
$f(z)=A_2(z)\sum_{t_n \in T_2}\frac{c_n \mu_n^{1/2}}{z-t_n} 
\in \mathcal{H}(T_2, A_2, \mu^{(2)})$. Writing the equation 
for the orthogonality of $A_1 f$ to
$\big\{\frac{A_1(z) G_2(z)}{z-\lambda}\big\}_{\lambda\in \Lambda_2}$,
we get \eqref{bab2} with some entire $S_2$ and so 
$f$ is orthogonal to $\big\{\frac{G_2(z)}{z-\lambda}\big\}_{\lambda\in \Lambda_2}$
in $\mathcal{H}(T_2, A_2, \mu^{(2)})$. Thus, the codimension of the system 
\eqref{bab3} in $\mathcal{H}(T, A, \mu)$ equals the codimension of the system
$\big\{\frac{G_2(z)}{z-\lambda}\big\}_{\lambda\in \Lambda_2}$
in $\mathcal{H}(T_2, A_2, \mu^{(2)})$, that is, $N$. 
\smallskip

Let us show that the system $\{k_\lambda\}_{\lambda\in T_1\cup\Lambda_2}$
is complete in $\mathcal{H}(T, A, \mu)$. Assume that $A_1G_2V \in 
\mathcal{H}(T, A, \mu)$ for some entire function $V$. Then 
there exists a sequence $(d_n)\in \ell^2$ such that
$$
A_1(z)G_2(z)V(z) = A_1(z)A_2(z) \sum_n \frac{d_n \mu_n^{1/2}}{z-t_n},
$$
and so $d_n = 0$, $t_n \in T_1$. Hence, 
$$                                                                  
G_2(z)V(z) = A_2(z) \sum_{t_n\in T_2} 
\frac{d_n \mu_n^{1/2}}{z-t_n} \in \mathcal{H}(T_2, A_2, \mu^{(2)}),
$$
a contradiction to completeness of 
$\{k^{(2)}_\lambda\}_{\lambda\in \Lambda_2}$ in $\mathcal{H}(T_2, A_2, \mu^{(2)})$.
\smallskip

It remains to show that the biorthogonal system 
$\big\{\frac{A_1(z) G_2(z)}{z-\lambda}\big\}_{\lambda\in T_1 \cup \Lambda_2}$
is complete in $\mathcal{H}(T, A, \mu)$. 
Assume that there exists $(c_n)\in \ell^2$ such that 
the function $g(z) = A(z)\sum_n \frac{c_n \mu_n^{1/2}}{z-t_n}$ 
is orthogonal to the above system. Then, as in Subsection \ref{param},
we have, for $\lambda \in \Lambda_2$, 
$$
0= \Big(\frac{A_1G_2}{z-\lambda}, g\Big) = 
\sum_n \frac{A_1(t_n) G_2(t_n) \bar c_n}{A'(t_n)\mu_n^{1/2}(t_n-\lambda)} = 
\sum_{t_n\in T_2} \frac{G_2(t_n) \bar c_n}{A_2'(t_n)\mu_n^{1/2}(t_n-\lambda)},
$$
and so there exists an entire function $S_2$ such that 
$$
A_2 (z)\sum_{t_n\in T_2} \frac{G_2(t_n) \bar c_n}{A_2'(t_n)\mu_n^{1/2}(z- t_n)} 
= G_2(z)S_2(z).
$$
On the other hand, if $\lambda = t_m \in T_1$, then
$$
\begin{aligned}
0= \Big(\frac{A_1 G_2}{z-t_m}, g\Big) & = 
\sum_{t_n\in T_2} \frac{A_1(t_n) G_2(t_n) \bar c_n}{A'(t_n)\mu_n^{1/2}(t_n-t_m)}
+ \frac{A_1'(t_m) G_2(t_m)\bar c_m}{A'(t_m)\mu_m^{1/2}} \\
& = 
\sum_{t_n\in T_2} \frac{G_2(t_n) \bar c_n}{A_2'(t_n)\mu_n^{1/2}(t_n-t_m)}
+ \frac{G_2(t_m)\bar c_m}{A_2(t_m)\mu_m^{1/2}}.
\end{aligned}
$$
We conclude that 
$$
\frac{G_2(t_m)\bar c_m}{A_2(t_m)\mu_m^{1/2}} = 
\frac{G_2(t_m) S_2(t_m)}{A_2(t_m)}, 
\qquad t_m \in T_1,
$$
and so $S_2(t_m) = \bar c_m$ (recall that $\mu_m =1$).  

By estimates \eqref{lac1}, $|G_2(z)S_2(z)/A_2(z)| \lesssim |z|$, 
${\rm dist}\, (z, T_2) \ge 1$. Since $G_2$ is lacunary product
we conclude that 
$\log M_{S_2}(r) \lesssim \log M_{A_2}(r)$, whence
$$
\int_0^R\frac{n_{S_2}(r)}{r}dr \lesssim 
\int_0^R  \frac{n_{T_2} (r)}{r}dr.
$$
At the same time 
$n_{T_2}(r) = o(n_{T_1}(r))$, $r\to \infty$, and $\{S_2(t_n)\}_{t_n\in T_1}
\in \ell^2$. Hence, by Lemma \ref{newl},
$S_2 \equiv 0$ and so the system
$\big\{\frac{A_1(z) G_2(z)}{z-\lambda}\big\}_{\lambda\in T_1 \cup \Lambda_2}$
is complete. This completes the proof of Theorem \ref{coun}.
\qed
\bigskip


\section{Proof of ordering theorems}
\label{th27} 

\subsection{Nearly invariant and division-invariant subspaces.}
Let $\mathcal{H}$ be a reproducing kernel Hilbert space of functions
analytic in some domain $D$. A closed subspace $\mathcal{H}_0$ of 
$\mathcal{H}$ is said to be {\it nearly invariant} if there is 
$w_0\in D$ such that $\frac{f(z)}{z-w_0} \in \mathcal{H}_0$ 
whenever $f\in \mathcal{H}_0$
and $f(w_0) =0$. This notion goes back to the work of Hitt \cite{hitt}
and Sarason \cite{sar}. Usually it is assumed that $w_0 = 0$. 
It is known that nearly invariance is equivalent
to a stronger {\it division invariance property}. 
Denote by $\mathcal{Z}(\mathcal{H}_0)$ 
the set of common zeros for $\mathcal{H}_0$,
that is, the set of $w\in D$ such that $f(w) =0 $ for any $f\in
\mathcal{H}_0$. Then the division invariance means that 
for any $w\in D \setminus \mathcal{Z}(\mathcal{H}_0)$,
$$
f\in \mathcal{H}_0, \ \ f(w) = 0 \ \Longrightarrow \ 
\frac{f(z)}{z-w}\in \mathcal{H}_0.
$$
In the context of Hardy spaces in general domains the equivalence
of nearly invariance and division invariance is shown 
in \cite[Proposition 5.1]{ar}; a similar argument works 
for general spaces of analytic functions.

\begin{proposition}
\label{nea1}
Let $\mathcal{H}_0$ be a nearly invariant subspace 
of some reproducing kernel Hilbert space $\mathcal{H}$ of functions
analytic in some domain $D$.
Then, for any $w\in D\setminus \mathcal{Z}(\mathcal{H}_0)$
and any $f\in \mathcal{H}_0$ such that $f(w) =0$, we have
$\frac{f(z)}{z-w} \in \mathcal{H}_0$.
\end{proposition}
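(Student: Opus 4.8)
The plan is to adapt the argument of \cite[Proposition 5.1]{ar} (stated there for $H^2$ of multiply connected domains) to a general space of analytic functions. We may assume $\mathcal{H}_0\neq\{0\}$; then in fact $w_0\notin\mathcal{Z}(\mathcal{H}_0)$, for if $w_0\in\mathcal{Z}(\mathcal{H}_0)$ then near invariance gives $(z-w_0)^{-k}f\in\mathcal{H}_0$ for every $f\in\mathcal{H}_0$ and every $k\geq0$ (each quotient again vanishes at $w_0$), forcing $f\equiv0$. Also $\mathcal{Z}(\mathcal{H}_0)$ is contained in the (discrete) zero set of any fixed nonzero element of $\mathcal{H}_0$, so $D\setminus\mathcal{Z}(\mathcal{H}_0)$ is connected. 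Fix $g_0\in\mathcal{H}_0$ with $g_0(w_0)=1$ and introduce the divided difference operator
$$D_0:\mathcal{H}_0\to\mathcal{H}_0,\qquad (D_0f)(z)=\frac{f(z)-f(w_0)g_0(z)}{z-w_0},$$
which is well defined by near invariance (the numerator vanishes at $w_0$) and bounded by the closed graph theorem, since norm convergence in $\mathcal{H}_0$ forces pointwise convergence and hence determines any limit of $D_0f_n$. A one-line computation gives the identity
$$\bigl(I-(\lambda-w_0)D_0\bigr)F(z)=\frac{(z-\lambda)F(z)+(\lambda-w_0)F(w_0)g_0(z)}{z-w_0},\qquad F\in\mathcal{H}_0,\ \lambda\in D.$$

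The heart of the proof is the local statement that $\mathcal{H}_0$ is nearly invariant at every $\lambda$ in a disc around $w_0$. Indeed, shrinking the disc we may assume $\lambda\notin\mathcal{Z}(\mathcal{H}_0)$, $g_0(\lambda)\neq0$, and $I-(\lambda-w_0)D_0$ invertible on $\mathcal{H}_0$ (for which $|\lambda-w_0|<\|D_0\|^{-1}$ suffices), so that $G:=\bigl(I-(\lambda-w_0)D_0\bigr)^{-1}f\in\mathcal{H}_0$ is well defined for every $f\in\mathcal{H}_0$. If moreover $f(\lambda)=0$, substituting $F=G$ into the identity above yields $(z-\lambda)G(z)+(\lambda-w_0)G(w_0)g_0(z)=(z-w_0)f(z)$; evaluating at $z=\lambda$ and using $f(\lambda)=0$, $g_0(\lambda)\neq0$ gives $G(w_0)=0$, whence $G(z)=\frac{(z-w_0)f(z)}{z-\lambda}$. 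Since $G\in\mathcal{H}_0$ and $G(w_0)=0$, near invariance at $w_0$ gives $\frac{f(z)}{z-\lambda}=\frac{G(z)}{z-w_0}\in\mathcal{H}_0$, as wanted. In words: the inverse of $I-(\lambda-w_0)D_0$, whenever it exists, is precisely the quotient we are after.

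It remains to globalize. Let $N$ be the set of $w\in D\setminus\mathcal{Z}(\mathcal{H}_0)$ at which $\mathcal{H}_0$ is nearly invariant. Running the local argument with $w_0$ replaced by an arbitrary $w_1\in N$ (and $g_0$ by some $g_1\in\mathcal{H}_0$ with $g_1(w_1)=1$) shows that $N$ is open, and $N\neq\emptyset$. Since $D\setminus\mathcal{Z}(\mathcal{H}_0)$ is connected, it suffices to show $N$ is closed in it; for this one needs the Neumann radii of the divided difference operators $D_w$ to stay bounded away from $0$ as $w$ ranges over a neighbourhood of a limit point $w^{*}$, so that the local step carried out from a point $w_1\in N$ close enough to $w^{*}$ already reaches $w^{*}$. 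This uniform control — via a Banach--Steinhaus argument for the family $\{D_w\}$ together with analyticity/upper semicontinuity of the spectrum in $w$ near $w^{*}$ — is the one genuinely technical point of the proof; everything else is either formal or a short computation.
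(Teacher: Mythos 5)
Your local (openness) step is correct and is essentially a Neumann-series reformulation of the paper's own argument: where you invert $I-(\lambda-w_0)D_0$ for $|\lambda-w_0|<\|D_0\|^{-1}$, the paper writes $\frac{f(z)}{z-w}=g_w+h_w$ with $g_w\in\mathcal{H}_0$, $h_w\perp\mathcal{H}_0$, and derives $h_w=(w-w_0)\,{\rm P}_{\mathcal{H}_0^\perp}\bigl(\frac{h_w-h_w(w_0)f_0}{z-w_0}\bigr)$, which forces $h_w=0$ for $w$ close to $w_0$; the two computations are interchangeable, and your identification of the inverse of $I-(\lambda-w_0)D_0$ with the division operator is clean.

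The gap is the closedness of the set $N$ in $D\setminus\mathcal{Z}(\mathcal{H}_0)$, which you state but do not prove. Openness plus connectedness does not finish the argument, and the tools you name do not obviously supply the missing uniform bound: Banach--Steinhaus requires pointwise boundedness of the family $\{D_w\}$ on $\mathcal{H}_0$ for $w\in N$ near the limit point $w^{*}$, which is precisely what is in question (the $D_w$ are only known to act on $\mathcal{H}_0$ for $w\in N$, and a priori $\|D_w\|$ could blow up as $w\to w^{*}$ through $N$); nor is there a continuous operator family through $w^{*}$ to which semicontinuity of the spectrum could be applied. The paper closes exactly this point with a resolvent-type identity: setting $T_{w}f=\frac{f-\frac{f(w)}{g(w)}g}{z-w}$ for a fixed $g\in\mathcal{H}_0$ with $g(w^{*})\ne 0$ (each $T_w$ bounded by the closed graph theorem), one checks $T_{w_n}-T_{w^{*}}=(w_n-w^{*})T_{w_n}T_{w^{*}}$, whence $\|T_{w_n}\|\le\|T_{w^{*}}\|+|w_n-w^{*}|\,\|T_{w_n}\|\,\|T_{w^{*}}\|$, so $\sup_n\|T_{w_n}\|<\infty$ and $\|T_{w_n}-T_{w^{*}}\|\to 0$; since each $T_{w_n}$ preserves the closed subspace $\mathcal{H}_0$, so does $T_{w^{*}}$. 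You should either adopt this identity or provide an equivalent uniform estimate; without it the globalization step is incomplete.
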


\begin{proof}
We show that the set of $w$ in $D\setminus \mathcal{Z}(\mathcal{H}_0)$ satisfying
the conclusions of the proposition is both open and closed in 
$ D\setminus \mathcal{Z}(\mathcal{H}_0)$.  

First of all note that, for any $w\in D\setminus \mathcal{Z}(\mathcal{H}_0)$, 
the operator $f\mapsto \frac{f(z)}{z-w}$ is bounded 
from the subspace $\{f\in \mathcal{H}:\, f(w) = 0\}$ to $\mathcal{H}$
by the Closed Graph Theorem. 
Let $f(w) = 0$ and write
$\frac{f(z)}{z-w} = g_w +h_w$, where $g_w \in \mathcal{H}_0$,
$h_w \perp \mathcal{H}_0$. 
Fix some function $f_0\in \mathcal{H}_0$ such that $f_0(w_0) = 1$.
Then, by a simple computation, 
$$
\frac{f - f(w_0)f_0}{z-w_0}  = g_w+h_w + (w_0-w)\bigg(
\frac{g_w - g_w(w_0)f_0}{z-w_0} + \frac{h_w - h_w(w_0)f_0}{z-w_0}
\bigg) \ \in \mathcal{H}_0.
$$
Since $g_w$ and $\frac{g_w - g_w(w_0)f_0}{z-w_0}$ are in $\mathcal{H}_0$,
we conclude that
$$
h_w = (w-w_0) {\rm P}_{\mathcal{H}_0^\perp}
\bigg(\frac{h_w - h_w(w_0)f_0}{z-w_0}\bigg),
$$
where ${\rm P}_{\mathcal{H}_0^\perp}$ denotes the orthogonal projection. 
This is impossible when $w$ is sufficiently close to $w_0$ unless
$h_w =0$, since $\big\|\frac{h_w - h_w(w_0)f_0}{z-w_0}
\big\| \le C \|h_w\|$ for some constant $C$ independent  on $w$.
We used the fact that $\mathcal{H}$ is a reproducing kernel Hilbert space 
to estimate $h_w(w_0)$ by $\|h_w\|$. Thus, we have seen that if we can 
divide in $\mathcal{H}_0$ by $z-w_0$ then we can
divide by $z-w$ with $w$ close to $w_0$.

Let us show that the set of $w$ satisfying the conclusion of the proposition
is closed. Let $w \in D\setminus \mathcal{Z}(\mathcal{H}_0)$ and assume that
$w_n\to w$ is a sequence such that one can divide by $z-w_n$ 
in $\mathcal{H}_0$. Fix some function $g \in \mathcal{H}_0$ such that 
$g(w) \ne 0$. We may assume that $g(w_n)\ne 0$ for all $n$.
Then all operators $T_{w_n}$, where
$$
T_{w}f = \frac{f- \frac{f(w)}{g(w)} g}{z-w},
$$   
are bounded. An easy computation leads to a resolvent-type identity 
$$
T_{w_n} - T_w = (w_n-w) T_{w_n}T_w.
$$
Therefore $\|T_{w_n}\| \le \|T_w\| + |w_n-w|\cdot \|T_{w_n}\|\cdot\|T_w\|$,
whence $\sup_n \|T_{w_n}\|<\infty$. It follows that 
$\|T_{w_n} - T_w\|\to 0$, $n\to \infty$. Since $T_{w_n}f\in 
\mathcal{H}_0$ for any $f\in \mathcal{H}_0$, we conclude that
$T_w \mathcal{H}_0 \subset \mathcal{H}_0$.                                          
\end{proof}
\medskip

\subsection{Reduction to an ordering theorems for nearly invariant subspaces.}
We show that Theorems \ref{ord1} and \ref{ord2} are equivalent 
to an ordering theorem for nearly invariant subspaces.
Let $\LL = \A +a\otimes b$ be a rank one perturbation
of a compact normal operator with simple spectrum $\{s_n\}$. 
Passing to the model operator 
$\mathcal{T}_G$ in the Cauchy--de
Branges space $\mathcal{H}(T, A, \mu)$, $T=\{t_n\} = \{s_n\}^{-1}$,
the problem becomes 
equivalent to the following. 
Let $\MM$ be an invariant subspace of $\mathcal{T}_G$. Put 
$\Lambda = \mathcal{Z}_G$ and let $\Lambda_2$ be the set of those
$\lambda\in \Lambda$ for which $\frac{G}{z-\lambda} \in \MM$. 
Then
\begin{equation}
\label{zaz}
\ospan \Big\{\frac{G}{z-\lambda}: 
\ \lambda\in \Lambda_2 \Big\} \subset \MM \subset 
\big( \ospan \{k_\lambda:\ \lambda\in \Lambda_1\} \big)^\perp,
\end{equation}
where $\Lambda_1 = \Lambda\setminus \Lambda_2$.
Since $\frac{G}{z-\lambda}\in \MM$ if and only if 
$\lambda\in \Lambda_2$, we conclude that the set of common zeros 
$\mathcal{Z}(\MM)$ coincides with $\Lambda_1$.

Recall that $G(0)=1$ and so $0\notin \Lambda$. 
Since $\MM$ is $\mathcal{T}_G$-invariant, we have, in particular,
$$
F\in \MM, \ F(0)  =0 \ \Longrightarrow \ \frac{F(z)}{z}\in \MM,
$$
that is, $\MM$ is nearly invariant and, hence, division-invariant 
by Lemma \ref{nea1}.  
                     
Now  Theorems \ref{ord1} and \ref{ord2} follow from the following 
ordering theorem.

\begin{theorem}
\label{ord3}
Let $\mathcal{H} = \mathcal{H}(T, A, \mu)$ be a de Branges--Cauchy space
and assume  that one of the folowing conditions 
holds\textup:
\begin{enumerate}
\item[(i)]
$T\subset \mathbb{R}$ and $|t_n|\to \infty$ as $|n|\to \infty$\textup;
\item[(ii)]
$T$ satisfies one of the conditions 
$\mathbf{Z}$, $\mathbf{\Pi}$ or $\mathbf{A_\gamma}$.
\end{enumerate}
Let $G\in {\rm Assoc}\,(T, A, \mu)$ and $\Lambda = \mathcal{Z}_{G} $. 
Assume that $\{k_\lambda\}_{\lambda\in \Lambda}$ is a complete 
and minimal system
of reproducing kernels in $\mathcal{H}$ and let
$\Lambda = \Lambda_1 \cup \Lambda_2$, $\Lambda_1 \cap \Lambda_2 = \emptyset$. 
Then the set of all nearly invariant subspaces $\MM$ satisfying \eqref{zaz}
is totally ordered by inclusion.
\end{theorem}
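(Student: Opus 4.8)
The plan is to adapt the proof of the classical Ordering Theorem of de Branges \cite[Theorem 35]{br}. It is enough to show that if $\MM_1$ and $\MM_2$ both satisfy \eqref{zaz} and $\MM_1 \not\subset \MM_2$, then $\MM_2 \subset \MM_1$. First I would collect some reductions. By Proposition \ref{nea1} near invariance is the same as division invariance, and the latter is inherited by $\MM_1 \cap \MM_2$ and by $\overline{\MM_1 + \MM_2}$; moreover both of these subspaces again lie between $\ospan\{G/(z-\lambda):\lambda\in\Lambda_2\}$ and $(\ospan\{k_\lambda:\lambda\in\Lambda_1\})^\perp$. Note also that \eqref{zaz} forces $\mathcal{Z}(\MM)=\Lambda_1$ for every admissible $\MM$, so all the competitors share the same set of common zeros. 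Writing $G=G_1G_2$ with $\mathcal{Z}_{G_1}=\Lambda_1$ and $\mathcal{Z}_{G_2}=\Lambda_2$, every $f\in\MM$ is divisible by $G_1$; dividing by $G_1$ maps $\mathcal{H}(T,A,\mu)$ and all admissible $\MM$ into a Cauchy--de Branges space with the same node set $T$ (hence satisfying the same one of the conditions $\mathbf{Z}$, $\mathbf{\Pi}$, $\mathbf{A_\gamma}$, or $T\subset\RR$), in which $\{k_\lambda\}_{\lambda\in\Lambda_2}$ is complete and minimal and $G_2$ is the associated generating function. So from now on $\Lambda=\Lambda_2$, and the admissible subspaces are the division invariant subspaces containing the biorthogonal span $\ospan\{G/(z-\lambda):\lambda\in\Lambda\}$.

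The core of the argument, following de Branges, is a phase/sign monotonicity obtained by playing the interpolation data of $\MM_1$ against that of $\MM_2$. Suppose $\MM_1\not\subset\MM_2$ and pick $f\in\MM_1\setminus\MM_2$; write $f=g+h$ with $g\in\MM_2$ and $0\ne h\perp\MM_2$. Applying $\MM_2$ to the partition of $\Lambda$ it induces, $h$ lies in the orthogonal complement of the corresponding mixed system $\mathcal{K}(\Lambda_1,\Lambda_2)$ and is therefore encoded by a pair of entire functions $(S_1,S_2)$ through \eqref{para2} (as in Subsection \ref{her1}); the reproducing kernels and biorthogonal vectors of $\MM_1$ supply companion entire functions. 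Forming the cross products of these functions exactly as in \eqref{mup}--\eqref{mup1}, but now mixing the $\MM_1$- and $\MM_2$-data, produces an \emph{entire} function $R$, of finite order by Lemma \ref{gr1}, whose growth at infinity is controlled by Cauchy transforms of the discrete measures $\sum|c_n|^2\delta_{t_n}$, $\sum c_n\bar d_n\delta_{t_n}$ and the like. The orthogonality $f\perp\MM_2$ makes the relevant Cauchy transforms $o(1/|z|)$ in the directions where we have control, which forces $R$ to be a polynomial of a degree bounded independently of $f$; unwinding the interpolation formulas then shows that $h$ already lies in $\MM_1$, and iterating this over a spanning family of $f$'s gives $\MM_2\subset\MM_1$. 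Lemma \ref{raz} is used repeatedly to pass from a single such function to the whole family.

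The geometric hypotheses on $T$ are exactly what make the last ``$R$ is a polynomial'' step go through. Under $\mathbf{Z}$ the function $A$, and hence every element of $\mathcal{H}(T,A,\mu)$ and every cross product above, has zero exponential type, so boundedness off a set of zero area density already forces $R$ to be constant by Theorem \ref{dens} (or Lemma \ref{newl}). Under $\mathbf{\Pi}$ and $\mathbf{A_\gamma}$ the support of the relevant discrete measures avoids a pair of rays --- the two directions perpendicular to the strip, respectively the bisectors of the two complementary angles --- so by Lemma \ref{ugol} the Cauchy transforms have the exact asymptotics $\nu(\mathbb{C})/z+o(1/z)$ along those rays; together with the finite-order bound of Lemma \ref{gr1} and the restriction on the convergence exponent of $T$ (which in case $\mathbf{A_\gamma}$ is precisely the threshold $\gamma^{-1}$ needed for the Phragm\'en--Lindel\"of principle in an angle of opening $\pi\gamma$), this pins down $R$. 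In the real case $\mathcal{H}(T,A,\mu)$ is a genuine de Branges space and the classical structure theory may be invoked directly.

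I expect the main obstacle to be the bookkeeping in the cross-product step: one has to verify that the entire function assembled from the $\MM_1$- and $\MM_2$-data is genuinely of finite order and enjoys the required one-sided decay along the distinguished rays, and this forces one to use the two interpolation formulas in \eqref{para2} simultaneously, the asymptotic estimate \eqref{meas}, Lemma \ref{raz}, and the precise location of $T$ all at once. A secondary point is to make the reduction to the case $\mathcal{Z}(\MM)=\Lambda_1$ explicit --- identifying the Cauchy--de Branges space produced by division by $G_1$ and checking that completeness and minimality of $\{k_\lambda\}$ are preserved.
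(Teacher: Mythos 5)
Your proposal correctly identifies the source of the argument (de Branges' proof of \cite[Theorem 35]{br}), but the mechanism you substitute for it does not work, and the logical endgame is a non sequitur. You take $f\in\MM_1\setminus\MM_2$, split $f=g+h$ with $h\perp\MM_2$, observe that $h\perp\mathcal{K}(\Lambda_1,\Lambda_2)$, and then feed $h$ into the cross-product parametrization \eqref{para2}--\eqref{mup1} from the synthesis section. That machinery only controls the \emph{codimension} of $\big(\mathcal{K}(\Lambda_1,\Lambda_2)\big)^\perp$ -- i.e., the size of the gap between the smallest and largest admissible subspaces -- and says nothing about how the intermediate subspaces sit relative to one another; by Theorem \ref{coun} that gap can have any dimension, so no finite-defect conclusion can order the lattice. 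Moreover, even granting your claim that ``$h$ already lies in $\MM_1$,'' this does not yield $\MM_2\subset\MM_1$: it would only show that the projection of $\MM_1$ onto $\MM_2^\perp$ lands in $\MM_1$, which is compatible with the two subspaces being incomparable. The argument is intrinsically two-sided and you have only used one side.

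What the paper actually does is assume both $\MM_1\not\subset\MM_2$ and $\MM_2\not\subset\MM_1$, pick $F_1\perp\MM_2$ not orthogonal to $\MM_1$ \emph{and} $F_2\perp\MM_1$ not orthogonal to $\MM_2$, and for $G_1F\in\MM_1$, $G_1H\in\MM_2$ form the two functions
$$
f(w)=\Big\langle G_1\tfrac{F-\frac{F(w)}{H(w)}H}{z-w},F_1\Big\rangle,\qquad
h(w)=\Big\langle G_1\tfrac{H-\frac{H(w)}{F(w)}F}{z-w},F_2\Big\rangle ,
$$
built from division-invariance (difference quotients), not from the $\mathcal{S}_{12}$ interpolation data. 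One then shows $f,h$ are entire and of \emph{zero exponential type} (via Smirnov-class membership of $G_1F/A$, $G_1H/A$, Krein's theorem, Lemma \ref{maxi} to exclude an exponential inner factor from $G/A$, and Phragm\'en--Lindel\"of or \cite{abb} in the cases $\mathbf{\Pi}$, $\mathbf{A_\gamma}$, $\mathbf{Z}$), and that $\min(|f(w)|,|h(w)|)\lesssim|\im w|^{-1}$ (or $\lesssim|w|^M$ off a small set). The crux is de Branges' \cite[Lemma 8]{br}, which forces $f\equiv 0$ or $h\equiv 0$; the resulting identity \eqref{sl6}, compared with the Cauchy-transform asymptotics of Lemma \ref{ugol} (or Lemma \ref{verd} in case $\mathbf{Z}$) along a ray separated from $T$, then yields the contradiction. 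None of these ingredients -- the two-function construction, the zero-type estimate, and above all \cite[Lemma 8]{br} -- appears in your outline, and without them the proof does not close.
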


The proof of this theorem is based on the ideas of L. de Branges 
and is very similar to the proof of \cite[Theorem 35]{br}
or \cite[Theorems 1.3, 1.4]{abb}. However, our hypothesis is somewhat 
different and we cannot just refer to the above results.
Therefore, and in order to make the exposition more self-contained, 
we present below the proof of Theorem \ref{ord3}.
\smallskip 


\subsection{Preliminaries on the Smirnov class and Krein's theorem.}
\label{kre} 
Here we will use some basic notions of Hardy spaces theory 
and Nevanlinna inner-outer factorization (see, e.g., \cite{ko}).
Recall that a function $f$ analytic in $\CC^+$ is said to be of 
{\it bounded type} in $\CC^+$,
if $f=g/h$ for some functions $g$, $h$ analytic and bounded in $\CC^+$. 
If, moreover, $h$ can be taken to be outer, we say that $f$
belongs to the {\it Smirnov class} in $\cp$.  

Assume now that $T\subset\cm$ and consider a discrete Cauchy transform 
\begin{equation}
\label{kaa}
f(z) = \sum_n \frac{c_n}{t_n-z}, \qquad \sum_n \frac{|c_n|}{|t_n|}<\infty.
\end{equation}
Then $f$ is in the Smirnov class in $\cp$. This follows from the fact
that $\ima f>0$ in $\cp$ if $c_n>0$. Any function with positive imaginary part 
belongs to the Smirnov class, as well as a sum of two Smirnov class functions.

In particular, a discrete Cauchy transform 
with real poles is a function of Smirnov class 
both in the upper half-plane $\mathbb{C}^+$
and in the lower half-plane $\mathbb{C}^-$.  Therefore, 
in the case $T\subset \mathbb{R}$, the function $f/A$ is of 
Smirnov class in $\CC^+$ and $\CC^-$  
for any $f\in \mathcal{H}(T, A, \mu)$ and, thus, zeros 
of $f$ satisfy the Blaschke condition in $\CC^+$ and $\CC^-$. 

A classical result by M.G. Krein says that if $f$ is an entire function
which is in the Smirnov class both in $\CC^+$ and in $\CC^-$, then
$f$ is of zero exponential type.
For different approaches to this result see 
\cite[Part II, Chapter 1]{hj},  \cite[Lecture 16]{lev} or \cite{br}.

In what follows we will need the following (very standard)
observation that the generating function of a complete and minimal 
system must have a maximal growth with respect to order 1.

\begin{lemma} 
\label{maxi}
Let $\mathcal{H} = \mathcal{H}(T, A, \mu)$ be a de Branges--Cauchy space
such that $T \subset \{-r\le \ima z\le r\}$ and 
either $r=0$ \textup(i.e., $T\subset \mathbb{R}$\textup) or $r>0$ and $T$
has finite convergence exponent.
Let $G\in {\rm Assoc}\,(T, A, \mu)$ and $\Lambda = \mathcal{Z}_{G}$. 
Assume that the system $\{k_\lambda\}_{\lambda\in \Lambda}$ is complete 
and minimal in $\mathcal{H}$. Then the inner-outer factorizations
for $G/A$ in $\cp+ir$ and $\cm-ir$ 
are, respectively, $G/A = \mathcal{O} B$ and $G/A = \widetilde{\mathcal{O}} 
\widetilde{B}$, where
$\mathcal{O}, \widetilde{\mathcal{O}}$ are the corresponding outer functions 
and $B, \widetilde{B}$ are
some Blaschke products.                                
\end{lemma}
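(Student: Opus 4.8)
The plan is to exploit the completeness and minimality of $\{k_\lambda\}_{\lambda\in\Lambda}$ to rule out any ``extra'' inner factor in the factorization of $G/A$, beyond the Blaschke product carrying its zeros. I will argue in $\cp+ir$; the half-plane $\cm-ir$ is symmetric. First I would recall, as in Subsection \ref{kre}, that since $T\subset\{-r\le\ima z\le r\}$ and $T$ has finite convergence exponent (or $T\subset\RR$ and $r=0$), every element of $\mathcal{H}(T,A,\mu)$, divided by $A$, is a ratio of discrete Cauchy transforms with poles in a strip, and hence is of bounded type in $\cp+ir$; more precisely, by the observation on \eqref{kaa}, $f/A$ lies in the Smirnov class in $\cp+ir$ for every $f\in\mathcal{H}$. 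The same applies to $G/A$ itself (here one uses $G\in{\rm Assoc}\,(T,A,\mu)$, so $G/(z-\lambda_0)\in\mathcal{H}$ for a fixed $\lambda_0\in\Lambda$, and multiplying back by the linear factor does not affect membership in the Smirnov class). Thus $G/A$ has a well-defined inner-outer factorization $G/A = \mathcal{I}\,\mathcal{O}$ in $\cp+ir$, where $\mathcal{I}$ is inner; write $\mathcal{I} = B\,\mathcal{I}_s$ with $B$ the Blaschke product over the zeros of $G$ in $\cp+ir$ and $\mathcal{I}_s$ the singular inner factor.

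The key step is to show $\mathcal{I}_s\equiv 1$. Suppose not. Then the function $g := G/\mathcal{I}_s$ (interpreted in $\cp+ir$) is still of bounded type and Smirnov, has the same zero set $\Lambda$ as $G$, and formally $g/A$ is ``smaller'' in $\cp+ir$ than $G/A$. The idea, following the standard argument, is to produce from $g$ a nonzero element of $\mathcal{H}$ orthogonal to all $k_\lambda$, $\lambda\in\Lambda$, contradicting completeness; or alternatively to exhibit a nonzero element of $\ospan\{k_\lambda\}$ with an extra orthogonality relation contradicting minimality. Concretely, I would consider a function of the form $G(z)S(z)$ where $S$ is chosen so that the singular factor of $G/A$ in $\cp+ir$ gets absorbed: since $\mathcal{I}_s$ is inner and nonconstant, one can find (using that $\mathcal{H}$ contains $G/(z-\lambda)$ and is closed under the division-invariance built into ${\rm Assoc}$) a function in $\mathcal{H}$ vanishing on $\Lambda$, which by the parametrization of the orthogonal complement in Subsection \ref{param} (or Subsection \ref{her1}) corresponds to a nonzero element of the space $\mathcal{S}$; but completeness of $\{k_\lambda\}_{\lambda\in\Lambda}$ means precisely that no nonzero element of $\mathcal{H}$ vanishes on all of $\Lambda$ (equivalently $\mathcal{S}=\{0\}$ when the biorthogonal side is trivial), so the singular inner factor cannot be present. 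The symmetric statement in $\cm-ir$ follows verbatim.

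I expect the main obstacle to be the careful bookkeeping in $\cp+ir$ versus $\cp$: one must verify that translating the half-plane by $ir$ does not spoil the Smirnov-class membership of the relevant Cauchy transforms, which is where the hypothesis $T\subset\{-r\le\ima z\le r\}$ and finite convergence exponent are used (when $r>0$ the function $A$ itself need not be of bounded type, only of finite order, so one works with $f/A$ and keeps track of the zeros/poles lying in the strip). The second delicate point is making precise the passage ``extra inner factor $\Rightarrow$ nonzero function in $\mathcal{S}$''; here the cleanest route is to phrase everything through the identity $S(t_n) = \bar c_n\mu_n^{-1/2}$ from Subsection \ref{param} and the Smirnov-class/bounded-type dichotomy, rather than manipulating inner functions directly, so that ``$G/A$ has no singular inner part'' becomes the statement that the only entire $S$ with $GS/A$ of the required Cauchy-transform form and $\sum_n|S(t_n)|^2\mu_n<\infty$ compatible with completeness is forced to the trivial one, pinning down the factorization to $G/A=\mathcal{O}B$ as claimed.
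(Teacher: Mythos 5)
Your overall strategy (show the extra inner factor is trivial by producing, from it, a nonzero element of $\mathcal{H}$ vanishing on $\Lambda$, contradicting completeness of $\{k_\lambda\}_{\lambda\in\Lambda}$) is indeed the strategy of the paper, and your preliminary observations about Smirnov-class membership of $G/A$ are correct. But the proposal stops exactly where the real work begins, and the step you leave vague would not go through as you describe it. First, you treat the singular inner factor $\mathcal{I}_s$ as a general singular inner function and propose to ``absorb'' it by dividing; but a general $\mathcal{I}_s$ does not extend to an entire function, so $G/\mathcal{I}_s$ is not an object you can feed back into $\mathcal{H}$ or into the parametrization of Subsection \ref{param}. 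The paper's first key point is that $G/A$ is \emph{meromorphic in all of} $\CC$, which forces the singular inner factor to be $e^{iaz}$ for some $a\ge 0$ --- an entire function. Without this observation your division step is not defined. Second, even granting $\mathcal{I}_s=e^{iaz}$, the passage from ``$a>0$'' to ``there is a nonzero function in $\mathcal{H}$ vanishing on $\Lambda$'' is the entire content of the lemma, and you only assert it. The paper constructs the candidate explicitly as $G(z)(e^{-iaz/2}-1)/z$ (the half-exponent keeps the product Smirnov in both half-planes, and the $-1$ makes division by $z$ legitimate), and proves membership in $\mathcal{H}$ by forming the difference $H$ between $G(z)(e^{-iaz/2}-1)/(zA(z))$ and the corresponding discrete Cauchy transform, showing $H$ is entire of zero exponential type (Krein's theorem when $r=0$; finite order plus Phragm\'en--Lindel\"of when $r>0$), and then killing $H$ by its decay along the imaginary axis. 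None of this appears in your sketch.

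A smaller but real confusion: you invoke the space $\mathcal{S}$ of Subsection \ref{param}, which parametrizes the orthogonal complement of the \emph{biorthogonal} system $\{G/(z-\lambda)\}_{\lambda\in\Lambda}$, whereas completeness of $\{k_\lambda\}_{\lambda\in\Lambda}$ is the statement that no nonzero $GU$ lies in $\mathcal{H}$; these are different orthogonal complements, and the identity $S(t_n)=\bar c_n\mu_n^{-1/2}$ is not the mechanism needed here. To repair the proof you should (i) record that meromorphy of $G/A$ reduces the singular factor to $e^{iaz}$, and (ii) carry out the interpolation-formula argument for $G(z)(e^{-iaz/2}-1)/z$ in both regimes $r=0$ and $r>0$.
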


\begin{proof}
We prove the factorization in $\cp+ir$, the case of $\cm -ir$
is analogous. Since $\frac{G}{z-\lambda} \in \mathcal{H}$
for any $\lambda\in \Lambda$, we conclude that $\frac{G}{A(z-\lambda)}$ 
is a discrete Cauchy transform of the form \eqref{kaa}
and so is in the Smirnov class in $\cp$. 
Hence, $G/A = \mathcal{O} Be^{iaz}$ with $a\ge 0$
(since $G/A$ is meromorphic in $\mathbb{C}$, the singular inner factor
reduces to $e^{iaz}$). Assume that $a>0$. Put 
\begin{equation}
\label{sl1}
H(z) = \frac{G(z)(e^{-iaz/2}-1)}{z A(z)} - 
\sum_n  \frac{G(t_n)(e^{-iat_n/2} -1)}{t_n A'(t_n)(z-t_n)}.
\end{equation}
Since $G\in {\rm Assoc}\,(T, A, \mu)$ and the sequence $\{e^{-iat_n/2}\}$ 
is bounded,
the above Cauchy transform converges absolutely. The residues at $t_n$
coincide and so $H$ is an entire function.

Consider first the case $r=0$. Then $G/A$ is in the Smirnov class in 
$\cp$ and $\cm$. Moreover, since $a>0$, the function 
$Ge^{-iaz/2}/A = 
\mathcal{O} Be^{iaz/2}$ also is in the Smirnov class in $\cp$ and $\cm$. 
Since the discrete 
Cauchy transform in \eqref{sl1} also is in the Smirnov class
in $\cp$ and in $\cm$, 
we conclude that $H$ is of zero exponential type by Krein's theorem. 

Recall that any Smirnov class function $u$ in $\cp$ satisfies
\begin{equation}
\label{smi}
\log^+ |u(r e^{i\theta})| = o(r), \qquad r\to \infty,
\end{equation} 
for any fixed $\theta \in (0, \pi)$ (as usual, $\log^+ t = \max (\log t, 0)$). 
Hence, $Ge^{-iaz/2}/A$ tends to zero 
along the imaginary axis when $y\to \pm \infty$. Thus, $|H(iy)|\to 0$,
$|y|\to \pm\infty$, and we conclude that  $H\equiv 0$.
Now we have 
$$
\frac{G(z)(e^{-iaz/2}-1)}{z} = A(z)\sum_n  
\frac{G(t_n)(e^{-iat_n/2} -1)}{t_n A'(t_n)(z-t_n)},
$$
whence $\frac{G(e^{-iaz/2}-1)}{z} \in \mathcal{H}$, 
a contradiction to the fact that the system $\{k_\lambda\}_{\lambda\in 
\Lambda}$ is complete in $\mathcal{H}$.

If $r>0$, we argue similarly to show that $Ge^{-iaz/2}/A$ (and, hence, $H$) 
is of Smirnov class in $\cp+ir$  and $\cm-ir$. Since we also know that
$A$ is of finite order, we conclude by Lemma \ref{gr1} that $H$ is 
of finite order. Combining this with \eqref{smi} we see that $H$ is 
of zero exponential type by the standard Phragm\'en--Lindel\"of 
principle and, hence, $H\equiv 0$. The end of the proof 
is the same as for $r=0$. 
\end{proof}


\subsection{Proof of Theorem \ref{ord3}.}
In what follows we put 
$$
\nu = \sum_n \frac{\delta_{t_n}}{|A'(t_n)|^2\mu_n}.
$$
By Lemma \ref{gr2},  the embedding $\mathcal{H}(T,A,\mu)$ 
into $L^2(\nu)$ is isometric and
so we can compute the norm and inner product in
$\mathcal{H}(T,A,\mu)$ as the integral with respect to $\nu$. 

Assume that $\MM_1$ and $\MM_2$ are two subspaces satisfying 
\eqref{zaz} and neither $\MM_1 \subset \MM_2$ nor
$\MM_2 \subset \MM_1$. Choose nonzero functions
$F_1, F_2 \in \mathcal{H}$ such that $F_1 \perp \MM_2$ but
$F_1$ is not orthogonal to  $\MM_1$, while
$F_2 \perp \MM_1$ but $F_2$ is not orthogonal to $\MM_2$.

We fix two functions $G_1$ and $G_2$ such that 
$G=G_1G_2$, $\Lambda_1 = \mathcal{Z}_{G_1} $, 
$\Lambda_2 = \mathcal{Z}_{G_2}$.
Note that if $\MM$ satisfies \eqref{zaz}, then any function  
in $\MM$ is of the form $G_1F$ for some entire $F$.

Now let $G_1 F \in \MM_1$ and $G_1 H \in \MM_2$. 
Define two functions
$$
\begin{aligned}
f(w) & = \bigg\langle G_1\frac{F - 
\frac{F(w)}{H(w)} H}{z-w}, 
F_1\bigg\rangle_{\mathcal{H}(T,A,\mu)} = 
\int G_1(z) 
\frac{F(z) - \frac{F(w)}{H(w)} H(z)}{z-w} \overline{F_1(z)} 
d\nu(z), \\
h(w) & = \bigg\langle G_1 \frac{H - \frac{H(w)}{F(w)}F}{z-w}, 
F_2\bigg\rangle_{\mathcal{H}(T,A,\mu)}
= \int G_1(z)\frac{H(z) - \frac{H(w)}{F(w)}F(z)}{z-w}
\overline{F_2(z)} d\nu(z).
\end{aligned}
$$
The functions $f$ and $h$ are well-defined and analytic 
on the sets $\{w: H(w)\ne 0\}$
and $\{w: F(w)\ne 0\}$, respectively.
\medskip
\\
{\bf Step 1:} {\it $f$ and $h$ are entire functions, 
$f$ does not depend on the choice of $H$ and 
$h$ does not depend on the choice of $F$}.

Let $\tilde f$ be a function associated in a similar way to 
another function $G_1\tilde H \in \MM_2$,
$$
\tilde f(w)  = 
\int G_1(z) \frac{F(z) - \frac{F(w)}{\tilde H(w)}\tilde H(z)}{z-w} 
\overline{F_1(z)} d\nu(z).
$$
Then, for $\tilde F(w) \ne 0$ and $\tilde H(w) \ne 0$, we have
$$
\tilde f(w) - f(w) = \frac{F(w)}{H(w)\tilde H(w)} 
\int G_1(z)\frac{\tilde H(w)H(z) -H(w)\tilde H(z)}{z-w} 
\overline{F_1(z)} d\nu(z) = 0,
$$
since 
\begin{equation}
\label{vk}
G_1 \frac{\tilde H(w) H - H(w)\tilde H}{z-w}
\in \MM_2. 
\end{equation}
This inclusion is obvious when $w\notin\Lambda_1$ since $\MM_2$ 
is division-invariant; since the norm in $\mathcal{H}(T,A,\mu)$
coincides with the norm in $L^2(\nu)$, by continuity we obtain
the inclusion \eqref{vk} for $w\in \Lambda_1$ as well.

For any $w$ we can choose $H$ such that $H(w) \ne 0$
and so we can extend $f$ analytically to any point $w$.
Thus, $f$ and $h$ are entire functions.
\medskip
\\
{\bf Step 2:} {\it $f$ and $h$ are of zero exponential type.}
\smallskip
\\
{\bf Case $T\subset \mathbb{R}$.} We have
$$     
f(w) = \int G_1(z) 
\frac{F(z)\overline{F_1(z)}}{z-w}  d\nu(z) - 
\frac{F(w)}{H(w)} 
\int G_1(z) \frac{H(z)\overline{F_1(z)}}{z-w}  d\nu(z).
$$
Since $G_1F, G_1H \in 
\mathcal{H}(T, A, \mu)$
and the points $t_n$ are real, the functions $G_1F/A$ \
and $G_1H/A$ are in the Smirnov class in $\cp$
and $\cm$ by discussion in Subsection \ref{kre}. 
The function $f$ does not depend on the choice 
of $H$, and we can take $H = \frac{G_2}{z-\lambda}$ for some $\lambda
\in \Lambda_1$. Then, by Lemma \ref{maxi}, $G_1H/A$ has no exponential 
factor in its canonical factorization in $\cp$ and we conclude 
that $F/H = u/B$ for some Smirnov class function $u$ and some Blaschke product
$B$ in $\cp$. The discrete Cauchy transforms 
$$
\int \frac{G_1(z)F(z)\overline{F_1(z)}}{z-w}  d\nu(z), \qquad  
\int \frac{G_1(z)H(z)\overline{F_1(z)}}{z-w}  d\nu(z),
$$ 
are also in Smirnov class in $\cp$ (as functions of $w$), 
and so $f$ is in the Smirnov class
in $\cp$. Similarly, $f$ is in the Smirnov class in $\cm$. By Krein's theorem
$f$ is of zero exponential type.
\smallskip
\\
{\bf Case $\mathbf{\Pi}$.} We assume without loss of generality that
$T\subset \{-r\le \ima z \le r\}$.
Similarly to the above case we have that $f$ is of the Smirnov class in 
$\cp+ir$ and $\cm - ir$. By the hypothesis 
$T$ has a finite convergence exponent and so $A$ has finite order.
By Lemma \ref{gr1} the functions $G_1F$, $G_1H$, 
$$
A(w)\int \frac{G_1(z)F(z)\overline{F_1(z)}}{z-w}  d\nu(z), \qquad  
A(w)\int \frac{G_1(z)H(z)\overline{F_1(z)}}{z-w}  d\nu(z),
$$
are all of finite order, whence $f$ is of finite order. 
Since any function $u$ in the Smirnov class in $\cp$ satisfies
$\log^+ |u(r e^{i\theta})| = o(r)$, $r\to \infty$, 
for any $\theta \in (0, \pi)$, the 
standard Phragm\'en--Lindel\"of principle implies that $f$ is 
of zero exponential type.                                              
\smallskip
\\
{\bf Case $\mathbf{Z}$ or $\mathbf{A_\gamma}$.}
Here we can refer to \cite[Theorem 1.4, Corollary 3.1]{abb}: If $T$ satisfies
$\mathbf{Z}$ or $\mathbf{A_\gamma}$ and an entire function $f$ 
satisfies $f = u_1 + u_2\frac{u_3}{u_4}$ where $u_j$ are discrete
Cauchy transforms with poles in $T$, then $f$ is of zero exponential type.

Similarly, $h$ is of zero exponential type.  
\medskip
\\
{\bf Step 3:} {\it Either $f$ or $h$ is identically zero.}

Given $w$ such that $F(w)\ne 0$, $H(w) \ne 0$, 
we have
\begin{equation}
\label{ots}
\begin{aligned}
|f(w)| & \le \bigg|\int \frac{G_1(z)F(z)\overline{F_1(z)}}{z-w} d\nu(z) \bigg| + 
\bigg|\frac{F(w)}{H(w)}\bigg|\cdot 
\bigg|\int \frac{G_1(z)H(z)\overline{F_1(z)}}{z-w} d\nu(z) \bigg|,\\
|h(w)| & \le \bigg|\int \frac{G_1(z)H(z)\overline{F_2(z)}}{z-w} d\nu(z) \bigg| + 
\bigg|\frac{H(w)}{F(w)}\bigg|\cdot 
\bigg|\int \frac{G_1(z)F(z)\overline{F_2(z)}}{z-w} d\nu(z)\bigg|.
\end{aligned}
\end{equation}
If $T\subset \mathbb{R}$, then by a rough estimate of the Cauchy transforms,  
we have 
$$        
|f(w)| \lesssim \frac{1}{|\ima w|}\bigg(1+ \bigg|\frac{F(w)}{H(w)}\bigg|\bigg), \qquad
|h(w)| \lesssim \frac{1}{|\ima w|} \bigg(1+ \bigg|\frac{H(w)}{F(w)}\bigg|
\bigg). 
$$
From this we conclude that
$$
\min\big(|f(w)|, |h(w)|\big) \lesssim \frac{1}{|\ima w|}.
$$
Applying a well-known and  deep result by de Branges 
\cite[Lemma 8]{br}, we conclude that either $f$ or $h$ is zero. 

In the cases $\mathbf{Z}$, $\mathbf{\Pi}$ or $\mathbf{A_\gamma}$ we
argue as in the proof of \cite[Theorem 1.3]{abb}.
To estimate the Cauchy trnsforms in \eqref{ots} we use Lemma \ref{gr1}:
there exists $M>0$ and a set $E\subset (0, \infty)$ of zero 
linear density such that
$$         
|f(w)| \lesssim |w|^M\bigg(1+ \bigg|\frac{F(w)}{H(w)}\bigg|\bigg), \qquad
|h(w)| \lesssim |w|^M\bigg(1+ \bigg|\frac{H(w)}{F(w)}\bigg|\bigg), 
\qquad |w|\notin E.
$$
We conclude that
$$
\min\big(|f(w)|, |h(w)|\big) \lesssim |w|^M, \qquad |w|\notin E.
$$
Since $E$ has zero linear density, we can choose a sequence 
$R_j\to \infty$ such that $R_j\notin E$ and $R_{j+1}/ R_j \le 2$. Applying 
the maximum principle to the annuli $R_j\le |z|\le R_{j+1}$, we conclude that
$$
\min\big(|f(w)|, |h(w)|\big) \lesssim |w|^M, \qquad |w|\ge 1.
$$
Since both $f$ and $h$ are of zero exponential type, 
a small variation of \cite[Lemma 8]{br} gives that either $f$ 
or $h$ is a polynomial. 

Assume that $f$ is a nonzero polynomial. 
Now we define a line $\Gamma$ which is separated from $T$ (at infinity). 
If $T\subset \{-r\le \ima z\le r\}$, put $\Gamma=i\mathbb{R}$.
If $T$ is contained in some angle 
of the size $\pi\gamma$, $\gamma<1$ (say, $\{0\le \arg z\le \pi\gamma\}$),
put $\Gamma = \{\rho e^{i(\pi \gamma + \delta)}:\ \rho\in \mathbb{R}\}$,
where $0<\delta<\pi(1-\gamma)$. In each of these cases,
by Lemma \ref{ugol}, we have 
\begin{equation} 
\label{sl2}
\bigg|\int\frac{G_1(z) F(z)\overline{F_1(z)}}{z-w} d\nu(z) \bigg|
+
\bigg|\int\frac{G_1(z) H(z)\overline{F_1(z)}}{z-w} d\nu(z) \bigg| 
=O\Big(\frac{1}{|w|}\Big), 
\end{equation}
when $|w|\to\infty$, $w\in\Gamma$. 
Hence, from \eqref{ots}, 
$|F(w)/H(w)| \to\infty$ as $|w|\to\infty$, $w\in\Gamma$, and so
$$
\begin{aligned}
|h(w)| & \le \bigg|\int\frac{G_1(z) H(z)\overline{F_2(z)}}{z-w} d\nu(z) \bigg| 
\\ & \qquad \qquad + 
\bigg|\frac{H(w)}{F(w)}\bigg|\cdot 
\bigg|\int \frac{G_1(z) F(z)\overline{F_2(z)}}{z-w} d\nu(z)\bigg|  
= O\Big(\frac{1}{|w|}\Big), \qquad w\in \Gamma. 
\end{aligned}
$$
Using the fact that $h$ is of zero exponential type, we conclude that $h\equiv 0$.

In the case $\mathbf{Z}$ we have no information about location of the points
$t_n$. However, by Lemma \ref{verd}, there exists a set 
$\Omega$ of zero area density 
such that  \eqref{sl2} holds when $w\notin \Omega$.
Hence, $|F(w)/H(w)| \to\infty$ as $|w|\to\infty$, $w\notin\Omega$, and so
$$   
\begin{aligned}
|h(w)| & \le \bigg|\int\frac{G_1(z) H(z)\overline{F_2(z)}}{z-w} d\nu(z) \bigg| 
\\
& \qquad \qquad + 
\bigg|\frac{H(w)}{F(w)}\bigg|\cdot 
\bigg|\int \frac{G_1(z) F(z)\overline{F_2(z)}}{z-w} d\nu(z)\bigg| 
= O\Big(\frac{1}{|w|}\Big), \qquad w\notin \Omega\cup \widetilde{\Omega}, 
\end{aligned}
$$
where $\widetilde{\Omega}$ is another set of zero area density 
(here we again applied Lemma \ref{verd}). 
Thus, $h$ tends to zero outside a set of zero density
and so $h\equiv 0$ by Theorem \ref{dens}.
\medskip
\\
{\bf Step 4:} {\it End of the proof.} 

Without loss of generality, let $f\equiv 0$. Then
\begin{equation} 
\label{sl6}
\frac{F(w)}{H(w)}\int\frac{G_1(z) H(z)\overline{F_1(z)}}{z-w}  d\nu(z)
=\int\frac{G_1(z) F(z)\overline{F_1(z)}}{z-w}  d\nu(z)
\end{equation}                                            
for any $G_1 F\in \mathcal{M}_1$, $G_1 H\in \mathcal{M}_2$.

Recall that $F_1$ is not orthogonal to $\MM_1$ and so we can choose
$G_1 F \in \MM_1$ such that
$\langle G_1F, F_1\rangle_{\mathcal{H}(T, A, \mu)} 
= \int G_1F\overline{F}_1 d\nu \ne 0$.
Assume first that
$T\subset \{-r\le \ima z\le r\}$ or $T$ satisfies $\mathbf{A_\gamma}$
and let $\Gamma$ be the line constructed in Step 3.
We now compare the asymptotics of the right-hand side and on the 
left-hand side of \eqref{sl6} on $\Gamma$. 
By Lemma \ref{ugol}, we have
\begin{equation} 
\label{sl3}
\bigg|\int\frac{G_1(z) F(z)\overline{F_1(z)}}{z-w}  d\nu(z)\bigg| 
\gtrsim \frac{1}{|w|}, 
\end{equation} 
when $|w|\to \infty$, $w\in \Gamma$. Since $G_1H\perp F_1$ for 
any $G_1 H\in \MM_2$, we have (again by Lemma \ref{ugol}) 
\begin{equation} 
\label{sl4}
\bigg|\int\frac{G_1(z)H(z)\overline{F_1(z)}}{z-w}  
d\nu(z)\bigg| = o\Big(\frac{1}{|w|}\Big), 
\end{equation} 
when $|w|\to\infty$, $w\in \Gamma$.
We conclude that $|F(w)/H(w)| \to \infty$ when $|w|\to \infty$,
$w\in \Gamma$. Applying this fact and 
Lemma \ref{ugol} to $h$ we conclude from \eqref{ots} 
that $|h(w)|\to 0$ 
when $|w|\to \infty$, $w\in \Gamma$. Since $h$ is of zero exponential type,
we have $h\equiv 0$.

Thus, we have
\begin{equation}
\label{ghj}
\frac{H(w)}{F(w)}\int\frac{G_1(z)F(z)\overline{F_2(z)}}{z-w}  d\nu(z)
=\int\frac{G_1(z) H(z)\overline{F_2(z)}}{z-w}  d\nu(z)
\end{equation}
and we may repeat the above argument. Choose $G_1H \in \MM_2$
such that $\langle G_1H, F_2\rangle_{\mathcal{H}(T, A, \mu)} 
= \int G_1H\overline{F}_2 d\nu \ne 0$. 
Then, by Lemma \ref{ugol}, the modulus of the 
right-hand side in \eqref{ghj} is $\gtrsim |w|^{-1}$,
while the left-hand side is $o(|w|^{-1})$ when $|w|\to\infty$, $w\in \Gamma$. 
This contradiction proves Theorem \ref{ord3} in the cases when 
$T\subset \{-r\le \ima z\le r\}$ (in particular, $T\subset\mathbb{R}$) 
or $T$ satisfies $\mathbf{A_\gamma}$.

The proof for the case $\mathbf{Z}$ is similar but instead of
asymptotics along a line we consider the asymptotics outside
a set of zero density. Let $F$ be chosen as above. 
By Lemma \ref{verd},
we have \eqref{sl3} and \eqref{sl4} when $|w|\to\infty$, $w\notin\Omega$,  
for some set $\Omega$ of zero density.  
Applying this fact and 
Lemma \ref{verd} to $h$ we conclude that $|h(w)|\to 0$ 
outside a set of zero density
and so $h\equiv 0$ by Theorem \ref{dens}. Then we obtain \eqref{ghj}
and, repeating the argument once again, 
find that the modulus of the 
right-hand side in \eqref{ghj} is $\gtrsim |w|^{-1}$,
while the left-hand side is $o(|w|^{-1})$ when $|w|\to\infty$ 
outside a set of zero density. Case $\mathbf{Z}$
of Theorem \ref{ord3} is also proved.
\qed
\medskip


\section{Volterra rank one perturbations}
\label{volt} 

In this section we discuss the following problem: 
{\it for which compact normal operators $\A$ there exists a rank one perturbation
$\LL$ which is a Volterra operator}\,? In the case when $\A$ is a compact 
selfadjoint operator this question was answered in \cite{by0} in terms of
the so-called Krein class of entire functions. 

Recall that an entire function $F$ 
is said to be in the {\it Krein class} if 
\begin{itemize}
\item $F$ is real on $\RR$ and has simple real zeros $t_n$
(we assume for simplicity that $t_n \ne 0$);
\item for some integer $k\ge 0$, we have
$\sum_n \dfrac{1}{|t_n|^{k+1} |F'(t_n)|}<\infty$;
\item there exists a polynomial $R$ such that
\begin{equation}
\label{krein}
\frac{1}{F(z)} = R(z) + \sum_n
\frac{1}{F'(t_n)} \cdot \bigg(\frac{1}{z-t_n} +\frac{1}{t_n}+
\cdots + \frac{z^{k-1}}{t_n^k} \bigg).
\end{equation}
\end{itemize}
This class was introduced by M.G.~Krein who showed
that in this case $F$ is necessarily of the Cartwright class and, in particular,
of exponential type (see \cite[Theorem 5]{krein47} 
or \cite[Lecture 16]{lev}).
Indeed, $F$ is of bounded type in $\cp$ and $\cm$, 
whence (by yet another result of Krein, see Subsection \ref{kre})
$F$ is of exponential type.
Some further refinements of this result are due to
A.G.~Bakan and V.B.~Sherstyukov (see, e.g., \cite{shers}
and references 
\medskip therein).

One can extend the definition of the Krein class to the case when 
$T=\{t_n\} \subset \mathbb{C}$ no longer is assumed to be real. 
If $F$ satisfies all other conditions above, 
we say that $F$ belongs to the {\it generalized Krein class}. However,
one cannot extend Krein's theorem to this situation: 
a function in the generalized Krein class can have arbitrarily large order. 
Indeed, for a Krein class 
function $F$ one can write \eqref{krein} with $z^m$ in place of $z$, 
where $m\in\mathbb{N}$. 
Then, expanding $(z^m-t_n)^{-1}$ as a sum of simple fractions one can show that
$\tilde F(z) = F(z^m)$ is in the generalized Krein class.

In what follows we will say that an entire function $F$ 
with simple zeros in the set $T=\{t_n\} \subset \mathbb{C}\setminus\{0\}$ 
belongs to the (generalized) Krein class $\mathcal{K}_1$ if 
\begin{equation}
\label{krein1}
\frac{1}{F(z)} = \frac{1}{F(0)} + \sum_n \frac{1}{F'(t_n)}
\bigg(\frac{1}{z-t_n}+\frac{1}{t_n}\bigg), \qquad \sum _n
\frac{1}{|t_n|^2 |F'(t_n)|} <\infty.
\end{equation}
Then the main result of \cite{by0} can be stated as follows:

\begin{theorem} \textup(\cite{by0}\textup)
\label{annih2}
Let $\A$ be a compact selfadjoint operator with simple spectrum $\{s_n\}$, 
$s_n \ne 0$, and let $t_n =s_n^{-1}$. Then the following are equivalent\textup:

{\rm (i)} There exists a rank one perturbation $\LL$ of 
$\A$ which is a Volterra operator\textup;

{\rm (ii)} There exists a function $F \in \mathcal{K}_1$
such that the zero set of $F$ coincides with $\{t_n\}$.
\end{theorem}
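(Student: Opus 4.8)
The plan is to pass to the functional model of Theorem \ref{main1} and reduce everything to a single observation: \emph{$\LL$ is Volterra if and only if the generating function $G$ of its model has no zeros at all.} First I would note that if $\LL$ is Volterra then $b_n\ne 0$ for every $n$, for otherwise the eigenvector $e_n$ of $\A$ at $s_n$ (which satisfies $(e_n,b)=0$) would be an eigenvector of $\LL$ with eigenvalue $s_n\ne 0$. Hence $b$ is cyclic and, by Theorem \ref{main1}, $\LL$ is unitarily equivalent to the model operator $\mathcal{T}_G$ on some $\mathcal{H}(T,A,\mu)$ with $A(0)=G(0)=1$. Now $\mathcal{T}_G$ is compact (being unitarily equivalent to the compact operator $\A+a\otimes b$), and the nonzero part of the spectrum of a compact operator consists of eigenvalues, which by Lemma \ref{eig} are exactly the numbers $\lambda^{-1}$, $\lambda\in\mathcal{Z}_G$; so its spectrum is $\{0\}\cup\{\lambda^{-1}:\lambda\in\mathcal{Z}_G\}$ and $\LL$ is Volterra precisely when $\mathcal{Z}_G=\emptyset$. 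In particular $G(t_n)\ne 0$, and since $G(t_n)=A'(t_n)t_n^2a_n\bar b_n\nu_n$ we also obtain $a_n\ne 0$ for all $n$.

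For the implication (i)$\Rightarrow$(ii) I would then set $F:=A/G$. Since $G$ is zero-free this is entire, $F(0)=1$, and — the zeros of $A$ being simple while $G(t_n)\ne 0$ — it has simple zeros precisely on $T$. Reading the representation \eqref{rep1} in the form $G(z)/A(z)=1+\sum_n a_n\bar b_nt_n^2\nu_n\bigl(\frac{1}{z-t_n}+\frac{1}{t_n}\bigr)$ and using $1/F=G/A$ produces verbatim the Krein expansion \eqref{krein1}, with $1/F'(t_n)=a_n\bar b_nt_n^2\nu_n$ (the residue of $1/F$ at $t_n$). Finally I would check the summability: $\sum_n|t_n|^{-2}|F'(t_n)|^{-1}=\sum_n|a_nb_n|\nu_n\le\|a\|_{L^2(\nu)}\|b\|_{L^2(\nu)}<\infty$ by Cauchy--Schwarz, since $a,b\in L^2(\nu)$. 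Hence $F\in\mathcal{K}_1$ with zero set $T$.

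For the converse (ii)$\Rightarrow$(i) I would run the model backwards. Given $F\in\mathcal{K}_1$ with zero set $T$, rescale so that $F(0)=1$; since a generalized-Krein-class function may be of infinite order, I would not use a canonical product but simply take $A:=F$. Then I would choose
$$\nu_n=\frac{1}{|t_n|^2\,|F'(t_n)|},\qquad b_n=1,\qquad a_n=\frac{1}{F'(t_n)\,t_n^2\,\nu_n},$$
so that $|a_n|=1$ and $a_n\bar b_nt_n^2\nu_n=1/F'(t_n)$, and $\mu_n=|b_n|^2|t_n|^2\nu_n=|F'(t_n)|^{-1}$. The Krein summability condition is exactly $\sum_n\nu_n<\infty$, which guarantees at once that $a,b\in L^2(\nu)$ and that $\mu$ is admissible. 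Taking $\A$ to be multiplication by $z$ in $L^2(\nu)$, $\nu=\sum_n\nu_n\delta_{s_n}$, $s_n=t_n^{-1}$ — a compact selfadjoint operator with simple spectrum $\{s_n\}$, hence unitarily equivalent to the given operator — Theorem \ref{main1} identifies $\LL=\A+a\otimes b$ with $\mathcal{T}_G$ on $\mathcal{H}(T,F,\mu)$; by \eqref{rep1} and the choices above, $G(z)=F(z)\bigl(1+\sum_n\frac{1}{F'(t_n)}(\frac{1}{z-t_n}+\frac{1}{t_n})\bigr)=F(z)\cdot\frac{1}{F(z)}\equiv 1$ in view of \eqref{krein1}. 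Since $G\equiv 1$ has no zeros, $\mathcal{T}_G$, and hence $\LL$, is Volterra.

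The difficulty of this proof is almost entirely in the translation to the model: once one sees that the Volterra property means ``$G$ nowhere vanishing'', the Krein partial-fraction identity is nothing more than \eqref{rep1} read for $F=A/G$. The only point where genuine care is needed is the converse construction, where both $a$ and $b$ must be placed simultaneously in $L^2(\nu)$; the balanced choice $\nu_n\asymp(|t_n|^2|F'(t_n)|)^{-1}$ with $|a_n|=|b_n|=1$ is what makes the two square-summability requirements coincide with the single summability condition already built into the definition of $\mathcal{K}_1$.
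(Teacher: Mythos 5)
Your proof is correct and follows essentially the same route as the paper, which establishes the normal-operator analogue (Theorem \ref{annih3}) by the same reduction: the functional model identifies the Volterra property with the nonvanishing of $G$, and the Krein identity \eqref{krein1} for $F=A/G$ is then read off from \eqref{rep1}. The only cosmetic differences are that the paper passes through the intermediate condition that some nonvanishing $G\in{\rm Assoc}\,(T,A,\mu)$ exists and derives the expansion via the division property of $\mathcal{H}(T,A,\mu)$, whereas you work directly with \eqref{rep1}, and in the converse you make the explicit balanced choice of $a$, $b$, $\nu$ rather than invoking the converse half of Theorem \ref{main1} abstractly.
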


In the case when $\A$ satisfies condition (i) above, we say that 
the spectrum $\{s_n\}$ (or $\{t_n\}$) is {\it removable}.
An unexpected (and rather counterintuitive) consequence of Theorem \ref{annih2} 
is that adding a finite number of points to the spectrum helps it to
become removable, while deleting  a finite number of points 
from a removable spectrum may make it nonremovable.

A similar description of spectra removable by a rank one perturbation
holds true for compact normal operators.

\begin{theorem}
\label{annih3}
Let $\A$ be a compact normal operator with simple spectrum $\{s_n\}$, 
$s_n \ne 0$, and let $t_n =s_n^{-1}$. Then the following are equivalent\textup:
\smallskip

{\rm (i)} There exists a rank one perturbation $\LL$ of 
$\A$ which is a Volterra operator\textup;
\smallskip

{\rm (ii)} There exist a Cauchy--de Branges space 
$\mathcal{H}(T, A, \mu)$,  $T= \{t_n\}$, 
and an entire function $G\in {\rm Assoc}\,(T, A, \mu)$  
which does not vanish in $\mathbb{C}$\textup; 
\smallskip 

{\rm (iii)} There exists an entire function $A \in \mathcal{K}_1$
such that the $\mathcal{Z}_A = T$.
\end{theorem}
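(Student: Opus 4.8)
The plan is to read Theorem~\ref{annih3} off the functional model (Theorem~\ref{main1}) together with the description of the point spectrum of the model operator in Lemma~\ref{eig}; recall that a rank one perturbation $\LL=\A+a\otimes b$ of the compact operator $\A$ is compact, so ``$\LL$ is Volterra'' just means ``$\LL$ has no nonzero eigenvalue''. First I would prove (i)$\Leftrightarrow$(ii). If (i) holds then $b_n\ne 0$ for all $n$: writing $H=H_1\oplus H_2$ as in Lemma~\ref{bro} with $H_2$ corresponding to $\{n:b_n=0\}$, every $0\oplus e_m$ with $b_m=0$ is an eigenvector of $\LL$ with eigenvalue $s_m\ne 0$, so there is no such $m$. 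Hence $b$ is cyclic, Theorem~\ref{main1} applies, and $\LL$ is unitarily equivalent to $\mathcal{T}_G$ on some $\mathcal{H}(T,A,\mu)$ with $G\in{\rm Assoc}\,(T,A,\mu)$, $G(0)=1$; moreover $a_n\ne 0$ for all $n$, since $G(t_n)=A'(t_n)a_n\bar b_n t_n^2\nu_n$ by \eqref{rep1}, and $a_m=0$ would put $t_m\in\mathcal{Z}_G$, giving the nonzero eigenvalue $s_m=t_m^{-1}$ of $\mathcal{T}_G$ by Lemma~\ref{eig}. By Lemma~\ref{eig} again, $\mathcal{T}_G$ has no nonzero eigenvalue exactly when $G$ has no zero in $\mathbb{C}\setminus\{0\}$, that is (as $G(0)=1$) when $G$ never vanishes; this is (i)$\Rightarrow$(ii). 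Conversely, given $\mathcal{H}(T,A,\mu)$ and a nowhere vanishing $G\in{\rm Assoc}\,(T,A,\mu)$, rescale $G$ so $G(0)=1$; the converse part of Theorem~\ref{main1} makes $\mathcal{T}_G$ a model of a rank one perturbation of a compact normal operator with simple spectrum $\{t_n^{-1}\}$, which is unitarily equivalent to $\A$; being compact with no nonzero eigenvalue it is Volterra, so (ii)$\Rightarrow$(i).

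Next I would prove (ii)$\Leftrightarrow$(iii) by passing between $G$ and $A/G$; throughout we normalize $A$ so that $A(0)=1$ (this affects neither $\mathcal{H}(T,A,\mu)$ up to an equivalent norm nor the class ${\rm Assoc}$). Assume (ii), with $G(0)=1$; since $G$ is entire and nowhere zero, $\tilde A:=A/G$ is entire with exactly the simple zeros $T$ and $\tilde A(0)=1$. By the final paragraph of the proof of Theorem~\ref{main1}, $G$ has the form \eqref{rep1}, whence
$$
\frac{1}{\tilde A(z)}=\frac{G(z)}{A(z)}=1+\sum_n\gamma_n\Bigl(\frac{1}{z-t_n}+\frac{1}{t_n}\Bigr),\qquad \gamma_n=a_n\bar b_n t_n^2\nu_n ;
$$
comparing residues at $t_n$ gives $\gamma_n=1/\tilde A'(t_n)$, and
$$
\sum_n\frac{1}{|t_n|^2|\tilde A'(t_n)|}=\sum_n\frac{|\gamma_n|}{|t_n|^2}=\sum_n|a_n b_n|\,\nu_n\le\|a\|_{L^2(\nu)}\|b\|_{L^2(\nu)}<\infty,
$$
so $\tilde A\in\mathcal{K}_1$ with $\mathcal{Z}_{\tilde A}=T$, i.e.\ (iii). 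For (iii)$\Rightarrow$(ii), take $A$ as in (iii), rescaled so that $A(0)=1$ ($\mathcal{K}_1$ is stable under nonzero scalar factors and $A(0)\ne 0$ since $0\notin T$), and take $G\equiv 1$. Then $G$ is nowhere zero with $G(0)=1$, and \eqref{krein1} reads $1=A(z)\bigl(1+\sum_n A'(t_n)^{-1}(\frac{1}{z-t_n}+\frac{1}{t_n})\bigr)$, a representation of the form \eqref{rep1} with $\gamma_n=1/A'(t_n)$. It remains to find $\mu$ with $G\in{\rm Assoc}\,(T,A,\mu)$; by the construction in Theorem~\ref{main1} this amounts to producing $\nu=\sum_n\nu_n\delta_{s_n}$ ($s_n=t_n^{-1}$) and $a,b\in L^2(\nu)$ with $a_n\bar b_n t_n^2\nu_n=\gamma_n$ and then setting $\mu_n=|b_n|^2|t_n|^2\nu_n$. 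Since $\sum_n|\gamma_n||t_n|^{-2}=\sum_n\frac{1}{|t_n|^2|A'(t_n)|}<\infty$, a Cauchy--Schwarz splitting works: e.g.\ put $\nu_n=|\gamma_n|\bigl(|t_n|^2\max(1,n)\bigr)^{-1}$, $|b_n|=\max(1,n)^{1/2}$, and fix the arguments of $a_n,b_n$ so that $a_n\bar b_n t_n^2\nu_n=\gamma_n$; then $\sum_n\nu_n<\infty$, $\sum_n|b_n|^2\nu_n=\sum_n|a_n|^2\nu_n=\sum_n|\gamma_n||t_n|^{-2}<\infty$, and $\sum_n\mu_n(1+|t_n|^2)^{-1}\asymp\sum_n|\gamma_n||t_n|^{-2}<\infty$, as required.

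I do not expect a genuine obstacle: given Theorem~\ref{main1}, Theorem~\ref{annih3} is essentially a dictionary between ``$\mathcal{T}_G$ has no nonzero eigenvalue'' and ``$A/G\in\mathcal{K}_1$''. The two points needing a little care are the reduction to cyclic $b$ (which rests on the eigenvalue bookkeeping in the proof of Lemma~\ref{bro}) and the Cauchy--Schwarz argument that any $\{\gamma_n\}$ with $\sum_n|\gamma_n||t_n|^{-2}<\infty$ comes from an admissible triple $(a,b,\nu)$; neither is deep. One should also note that in \eqref{rep1} the bracketed factor, as a meromorphic function, equals $1/\tilde A$ identically and not merely formally, since $A/\tilde A$ and $A(z)\bigl(1+\sum_n\gamma_n(\cdots)\bigr)$ are both entire and agree off $T$, hence everywhere.
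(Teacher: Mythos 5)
Your proof is correct, and its skeleton (use Theorem \ref{main1} and Lemma \ref{eig} as a dictionary, so that ``Volterra'' becomes ``$G$ nonvanishing'', then pass between $G$ and $A/G$) is the same as the paper's; the proof of (i)$\Leftrightarrow$(ii) is essentially identical. Where you genuinely diverge is in (ii)$\Leftrightarrow$(iii). The paper replaces $A$ by $\tilde A=e^{-H}A=A/G$ (where $G=e^H$), so that $1\in{\rm Assoc}\,(T,\tilde A,\mu)$, and then extracts the Krein representation of $1/\tilde A$ from the division property $\frac{1-g(z)}{z}\in\mathcal{H}$ applied to an arbitrary $g\in\mathcal H$ with $g(0)=1$; for (iii)$\Rightarrow$(i) it runs this computation backwards, saying only ``similarly one shows\dots''. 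You instead read the identity $1/\tilde A=G/A=1+\sum_n\gamma_n\bigl(\frac{1}{z-t_n}+\frac{1}{t_n}\bigr)$ directly off the representation \eqref{rep1} established in the proof of Theorem \ref{main1}, identify $\gamma_n=1/\tilde A'(t_n)$ by residues, and get the summability condition from Cauchy--Schwarz; this is shorter and arguably more transparent, at the cost of leaning on the final (``it is easy to see'') paragraph of the model's proof rather than on the division property. Your write-up also supplies two details the paper elides: the observation that the Volterra property forces $b_n\ne 0$ for all $n$ (via the eigenvalue bookkeeping of Lemma \ref{bro}), which is needed before Theorem \ref{main1} can be invoked at all, and the explicit construction of an admissible triple $(\nu,a,b)$ (hence of $\mu$) realizing $\gamma_n=1/A'(t_n)$ in the direction (iii)$\Rightarrow$(ii); both additions are correct and worth keeping.
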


\begin{proof}
(i)$\Longrightarrow$(ii): Assume that $\LL$ is a Volterra perturbation.
By the functional model of Theorem \ref{main1} there exist
a Cauchy--de Branges space $\mathcal{H}(T, A, \mu)$
and an entire function $G\in {\rm Assoc}\,(T, A, \mu)$, $G(0)=1$,   
such that $\LL$ is unitarily equivalent to $\mathcal{T}_G$. Since 
$\mathcal{T}_G$ is a Volterra operator, we conclude that $G\ne 0$
(otherwise, any $\lambda \in \mathcal{Z}_G$ is an eigenvalue for 
$\mathcal{T}_G$).
\medskip

(ii)$\Longrightarrow$(iii):
Let $G$ be a nonvanishing function in ${\rm Assoc}\,(T, A, \mu)$. Then $G = e^{H}$
for some entire function $H$. It is clear that 
$e^{-H} \mathcal{H}(T, A, \mu) = \mathcal{H}(T, e^{-H}A, \mu)$ 
and $1 = e^{-H} G \in {\rm Assoc}\,(T, e^{-H} A, \mu)$. Therefore, changing $A$
we may assume that $1\in {\rm Assoc}\,(T, A, \mu)$.

Now let $g(z) = A(z)\sum_n \frac{c_n\mu_n^{1/2}}{z-t_n}$ 
be an arbitrary function in $\mathcal{H}(T, A, \mu)$ such that $g(0)=1$.
Then $\frac{1-g(z)}{z} \in \mathcal{H}(T, A, \mu)$ and so 
$$
1 = A(z)\bigg(\sum_n \frac{c_n\mu_n^{1/2}}{z-t_n}  + 
z \sum_n \frac{d_n\mu_n^{1/2}}{z-t_n}\bigg)
$$
for some $(d_n)\in\ell^2$. We conclude that 
$$
\frac{1}{A(z)} = -\sum_n \frac{c_n\mu_n^{1/2}}{t_n} + \sum_n
(c_n + d_nt_n)\mu_n^{1/2}\bigg(\frac{1}{z-t_n} + \frac{1}{t_n}\bigg). 
$$
It follows that $A'(t_n)^{-1} = (c_n + d_nt_n)\mu_n^{1/2}$
and so $A$ satisfies \eqref{krein1}. Thus, $A\in \mathcal{K}_1$.
\medskip

(iii)$\Longrightarrow$(i):
Let $A$ satisfy \eqref{krein1}. Similarly to the above computations,
one can show that $\frac{1-g(z)}{z} \in \mathcal{H}(T, A, \mu)$ 
for any $g\in \mathcal{H}(T, A, \mu)$ such that $g(0)=1$.
Thus, $G\equiv 1 \in {\rm Assoc}\,(T, A, \mu)$. By Theorem \ref{main1}
there exists a rank one perturbation $\LL$ which is unitarily equivalent to 
$\mathcal{T}_G$. Since $G\ne 0$, $\mathcal{T}_G$ obviously is a Volterra operator.
\end{proof}

\begin{remark}
{\rm In view of the role of the Krein class in the description of 
removable spectra, it is a natural problem to extend Krein's theorem 
to generalized Krein class imposing some conditions on $T$.
V.B.~Sherstykov \cite{shers} showed that 
the conclusion of Krein's theorem (i.e., $F$ is of exponential type) 
remains true if $F$ is of finite order, satisfies \eqref{krein} and 
$T$ is contained in some strip. Recently it was shown in \cite{abb}
that if $T$ is contained in some angle of size $\pi\gamma$, $\gamma\in(0,1)$
and $F$ is a function of order strictly less than $1/\gamma$
satisfying \eqref{krein}, then $F$ is of zero exponential type. 
Moreover, this growth restriction is sharp: for any $\gamma\in (0,1)$
there exists $F$ in the generalized Krein class with zeros
in an angle of size $\pi\gamma$ and of order exactly $1/\gamma$. }
\end{remark}

\begin{example}
{\rm It is clear that the class $\mathcal{K}_1$ is stable under multiplication 
by polynomials. Thus, adding a finite set to $T$ does not change the property 
to be  removable by a rank one perturbation. However, deleting 
a finite number of points may turn a removable spectrum into a nonremovable one.
Thus, removable spectra have a certain rigidity.

Let us give some concrete examples (the first two appeared in \cite{by0}):
\medskip

(i) Let $T = \big\{\pi\big(n+\frac{1}{2}\big)\big\}_{n\in\mathbb{Z}}$ and 
$A(z) = \cos\pi z$. 
Then $A\in \mathcal{K}_1$ and $T$ is removable. However, 
$T\setminus\{t_n\}$ is nonremovable  for any $t_n\in T$, since
for $F(z) = A(z)/(z-t_n)$ one has $|F'(t_m)|\asymp |t_m|^{-1}$, $m\ne n$,
and the series $\sum_{m\ne n} |F'(t_m)|^{-1}t_m^{-2}$ diverges.
\medskip

(ii) Let $T = \big\{\pi\big(n+\frac{1}{2}\big)^2\big\}_{n\in\mathbb{N}_0}$ 
and $A(z) = \cos\pi \sqrt{z}$. Then $T$ is removable, but 
it becomes nonremovable after deleting any of its elements.
\medskip

(iii) Let $A(z) = \cos (\pi z^k)$, $k\in \mathbb{N}$. 
Then it is not difficult to show that 
$A\in \mathcal{K}_1$ and so its zero set 
$T= \big\{\big|n+\frac{1}{2}\big|^{1/k}e^{\pi i j/k} \big\}_{n\in \mathbb{N}_0, 0\le j\le 2k-1}$ 
is removable. However, $T\setminus\{t_n\}$ is nonremovable  for any $t_n\in T$. }
\end{example}
\bigskip


\end{document}